\DeclareMathSymbol{\leq}{\mathrel}{symbols}{20}
\DeclareMathSymbol{\geq}{\mathrel}{symbols}{21}
   \let\ge=\geq
\newtheoremstyle{WreschTheoremstyle} 
                        {1.5em}    
                        {2.5em}    
                        {}         
                        {}         
                        {\bfseries}
                        {}        
                        {\newline} 
                        {\raisebox{0.6em}{\thmname{#1}\thmnumber{#2}\thmnote{ (#3)}}}
\newcommand{\R}{\mathbb{R}}
\newcommand{\N}{\mathbb{N}}
\newcommand{\F}{\mathcal{F}}
\renewcommand{\P}{\mathbb{P}}
\newcommand{\E}{\mathbb{E}}
\newcommand{\e}{\varepsilon}
\renewcommand{\1}{\mathbbm{1}}
\newtheorem{Theorem}{Theorem}[section]
\newtheorem{Proposition}[Theorem]{Proposition}
\newtheorem{Corollary}[Theorem]{Corollary}
\newtheorem{Lemma}[Theorem]{Lemma}
\newtheorem{Remark}[Theorem]{Remark}
\newtheorem{Example}[Theorem]{Example}
\numberwithin{equation}{section}
\newcommand{\customlabel}[1]{%
     \stepcounter{ref}%
   \protected@write
\@auxout{}{\string\newlabel{#1}{{\thesatz.\arabic{ref}}{\thepage}{\thesatz.\arabic{ref}}{#1}{}}}%
   \hypertarget{#1}{\thesatz.\arabic{ref}}%
}
\newenvironment{sciabstract}{\begin{quote}}{\end{quote}}
\newcounter{lastnote}
\title{Existence of densities for stochastic differential equations driven by L\'{e}vy processes with anisotropic jumps}
\author{
 Martin Friesen\footnote{Fakult\"at f\"ur Mathematik und Naturwissenschaften, Bergische Universit\"at Wuppertal, Gaußstraße 20, 42119 Wuppertal, Germany, friesen@math.uni-wuppertal.de}\\
 Peng Jin\footnote{Department of Mathematics, Shantou University, Shantou, Guangdong 515063, China, pjin@stu.edu.cn}\\
 Barbara R\"udiger\footnote{Fakult\"at f\"ur Mathematik und Naturwissenschaften, Bergische Universit\"at Wuppertal, Gaußstraße 20, 42119 Wuppertal, Germany, ruediger@uni-wuppertal.de}
}
\def\HyPsd@CatcodeWarning#1{}
\begin{document}



\maketitle

\begin{sciabstract}\textbf{Abstract:} We study existence of densities
for solutions to stochastic differential equations with Hölder continuous
coefficients and driven by a $d$-dimensional Lévy process $Z=(Z_{t})_{t\geq 0}$,
where, for $t>0$, the density function $f_{t}$ of $Z_{t}$ exists and satisfies, for some $(\alpha_{i})_{i=1,\dots,d}\subset(0,2)$
and $C>0$,
\[
\limsup\limits _{t\to0}t^{1/\alpha_{i}}\int\limits _{\R^{d}}|f_{t}(z+e_{i}h)-f_{t}(z)|dz\leq C|h|,\ \ h\in\R,\ \ i=1,\dots,d.
\]
Here $e_{1},\dots,e_{d}$ denote the canonical basis vectors in $\R^{d}$.
The latter condition covers anisotropic $(\alpha_{1},\dots,\alpha_{d})$-stable
laws but also particular cases of subordinate Brownian motion. 
To prove our result we use some ideas taken from \citep{DF13}. \end{sciabstract}

\noindent \textbf{AMS Subject Classification:} 60H10; 60E07; 60G30\\
 \textbf{Keywords:} Stochastic differential equation with jumps; anisotropic
Lévy process; anisotropic Besov space; transition density

\section{Introduction}

Let $d\geq1$ and $X=(X(t))_{t\ge0}$ be a solution to the Lévy driven SDE
\begin{align}
dX(t)=b(X(t))dt+\sigma(X(t-))dZ(t)\label{EQ:12}
\end{align}
on $\R^{d}$, where $b\in\R^{d}$ is the drift, $\sigma\in\R^{d\times d}$
the diffusion coefficient and $Z$ a $d$-dimensional
pure jump Lévy process with Lévy measure $\nu$, i.e.,
\begin{align}
\E[e^{i\xi\cdot Z(t)}]=e^{-t\Psi_{\nu}(\xi)},\qquad\Psi_{\nu}(\xi)=\int\limits _{\R^{d}}\left(1+\1_{\{|z|\leq1\}}(\xi\cdot z)i-e^{i(\xi\cdot z)}\right)\nu(dz).\label{SYMBOL}
\end{align}
If the coefficients of (\ref{EQ:12}) are smooth
enough and the driving noise exhibits certain regularities,
then existence of a smooth density for $X_{t}$ can
be obtained by the Malliavin calculus (see, e.g., \citep{BC86}, \citep{P96},
\citep{Z16} and \citep{Z17}). For Hölder continuous coefficients,
as studied in this work, one possibility is to apply the parametrix
method, see, e.g., \citep{KL16}, \citep{CZ16}, \citep{KK18} and
\citep{K18}. In this case one assumes that $b,\sigma$ are bounded,
$\sigma$ is uniformly elliptic, i.e., $\inf_{x\in\R^{d}}\inf_{|v|=1}|\sigma(x)v|>0$,
and most of the results are designed for a driving noise $Z$ which
is comparable to a
\begin{enumerate}
\item[(Z1)] rotationally symmetric $\alpha\in(0,2)$-stable Lévy process with symbol $\Psi_{\nu}(\xi)=|\xi|^{\alpha}$, $\xi\in\R^{d}$.
\end{enumerate}
Recently, in \citep{DF13} a simple method for proving existence of
a density on
\begin{align}
\Gamma=\{x\in\R^{d}\ |\ \sigma(x)\text{ is invertible}\}\label{MAIN:02}
\end{align}
for solutions to \eqref{EQ:12} with bounded and Hölder continuous
coefficients has been developed. Their main condition was formulated
in terms of an integrability and non-degeneracy condition on the Lévy
measure $\nu$ from which, in particular, the following
crucial estimate was derived
\begin{align}
c|\xi|^{\alpha}\leq\mathrm{Re}(\Psi_{\nu}(\xi))\leq C|\xi|^{\alpha},\ \ \forall\xi\in\R^{d},\ \ |\xi|\gg1,\label{EQ:31}
\end{align}
where $c,C>0$ and $\alpha\in(0,2)$ (see also \citep{KS17} for a
discussion of \eqref{EQ:31}). Similar ideas have been first applied
in dimension $d=1$, see \citep{FP10}, where the density was studied
in terms of its characteristic function. The technique in \citep{DF13}
has been further successfully applied in \citep{DR14}, \citep{R16b}
and \citep{R16a} to the Navier-Stokes equations driven by Gaussian
noises in $d=3$, and in \citep{F15} to the space-homogeneous Boltzmann
equation. A summary of this method and further extensions can be found
in \citep{R17}.

In the recent years we observe an increasing interest in the study
of \eqref{EQ:12} for a Lévy process $Z$ with anisotropic jumps (see,
e.g., \citep{BC06}, \citep{BSK17}, \citep{KR17} and \citep{C18}).
The most prominent example here is
\begin{enumerate}
\item[(Z2)] $Z=(Z_{1}^{\alpha_{1}},\dots,Z_{d}^{\alpha_{d}})$, where $Z_{1}^{\alpha_{1}},\dots,Z_{d}^{\alpha_{d}}$
are independent, one-dimensional symmetric $\alpha_{1},\dots,\alpha_{d}\in(0,2)$-stable
Lévy processes, i.e., one has for some constants $c_{\alpha_{1}},\dots,c_{\alpha_{d}}>0$
\begin{align}
\nu(dz)=\sum\limits _{j=1}^{d}c_{\alpha_{j}}\frac{dz_{j}}{|z_{j}|^{1+\alpha_{j}}}\otimes\prod\limits _{k\neq j}\delta_{0}(dz_{k}),\qquad\Psi_{\nu}(\xi)=|\xi_{1}|^{\alpha_{1}}+\dots+|\xi_{d}|^{\alpha_{d}}.\label{EQ:14}
\end{align}
\end{enumerate}
Since (Z2) satisfies \eqref{EQ:31} only for the case where $\alpha_{1}=\dots=\alpha_{d}$,
one obtains existence of a density from \citep{DF13}, provided $b,\sigma$
are bounded, Hölder continuous and $\alpha_{1}=\dots=\alpha_{d}$.
This work provides the first result on the existence of a density
for solutions to \eqref{EQ:12} where $Z$ is given
by (Z2) and $\alpha_{1},\dots,\alpha_{d}$ may be different.

\section{Statement of the results}

\subsection{Assumption on the Lévy noise}

Consider \eqref{EQ:12} for a Lévy process $Z=(Z(t))_{t\ge0}$
with Lévy measure $\nu$ and symbol \eqref{SYMBOL}. We suppose that
$Z$ satisfies the following condition:
\begin{enumerate}
\item[(A1)] For each $t>0$, $Z(t)$ has density $f_{t}$ and
there exist $\alpha_{1},\dots,\alpha_{d}\in(0,2)$ and a constant
$C>0$ such that, for any $k\in\{1,\dots,d\}$,
\begin{align}
\limsup\limits _{t\to0}t^{1/\alpha_{k}}\int\limits _{\R^{d}}|f_{t}(z+e_{k}h)-f_{t}(z)|dz\leq C|h|,\qquad h\in\R.\label{EXAMPLE:10}
\end{align}
\end{enumerate}
The anisotropic regularization property of the noise is reflected
by \eqref{EXAMPLE:10}. It imposes a growth condition for the Besov
norm of $f_{t}$ at the singularity in $t=0$ (since $f_{t}|_{t=0}=\delta_{0}$).
\begin{Remark}
\begin{enumerate}
\item[(i)] If $Z$ has, for $t>0$, density $f_{t}\in C^{1}(\R^{d})$, then
\eqref{EXAMPLE:10} is implied by
\begin{align}
\limsup\limits _{t\to0}t^{1/\alpha_{k}}\left\Vert \frac{\partial f_{t}}{\partial z_{k}}\right\Vert _{L^{1}(\R^{d})}<\infty,\ \ k\in\{1,\dots,d\}.\label{EXAMPLE:30}
\end{align}
\item[(ii)] Suppose that the symbol $\Psi_{\nu}$ given by \eqref{SYMBOL} satisfies
\eqref{EQ:31} for some $\alpha\in(0,2)$. Then $Z$ has, for $t>0$,
a smooth density which satisfies \eqref{EXAMPLE:30} for $\alpha=\alpha_{1}=\dots=\alpha_{d}$,
see, e.g., \citep[Lemma 3.3]{DF13}.
\item[(iii)] Let $W$, $Z$ be independent Lévy processes. If $Z$ satisfies (A1)
for some $(\alpha_{k})_{k\in\{1,\dots,d\}}$, then also $Z+W$ satisfies
(A1) for the same $(\alpha_{k})_{k\in\{1,\dots,d\}}$.
\end{enumerate}
\end{Remark} The following is our main guiding example. \begin{Example}\label{MAIN:EXAMPLE}
Take $m\in\N$ with $m\leq d$, $I_{1},\dots,I_{m}\subset\{1,\dots,d\}$
disjoint with $I_{1}\cup\dots\cup I_{m}=\{1,\dots,d\}$ and $\alpha_{1},\cdots\alpha_{m}\in(0,2)$.
Let $Z$ be a Lévy process with symbol
\[
\Psi_{\nu}(\xi)=\sum\limits _{j=1}^{m}|\xi_{I_{j}}|^{\alpha_{j}},\ \ \xi_{I_{j}}=(\xi_{k})_{k\in I_{j}}\in\R^{I_{j}},\ \ \xi=(\xi_{I_{1}},\dots,\xi_{I_{m}})\in\R^{d}.
\]
Then $Z=(Z_{I_{1}},\dots,Z_{I_{m}})$, where the components $Z_{I_{1}},\dots,Z_{I_{m}}$
are independent and each $Z_{I_{j}},$ $j\in\{1,\ldots m\}$,
is a rotationally symmetric $\alpha_{j}$-stable process in $\R^{I_{j}}$.
Denote by $f_{t}^{I_{1}},\dots,f_{t}^{I_{m}}$ their densities. Then
$Z(t)$ has smooth density $f_{t}(z)=f_{t}^{I_{1}}(z_{I_{1}})\cdots f_{t}^{I_{m}}(z_{I_{m}})$
and hence satisfies \eqref{EXAMPLE:30}.
\begin{enumerate}
\item[(i)] The fully isotropic case (Z1) corresponds to $m=1$, $\alpha:=\alpha_{1}$
and $I_{1}=\{1,\dots,d\}$.
\item[(ii)] The fully anisotropic case (Z2) corresponds to $m=d$ and $I_{j}=\{j\}$.
\end{enumerate}
\end{Example} It is worthwile to mention that (A1) includes a wider
class of Lévy processes than those considered in Example \ref{MAIN:EXAMPLE},
see Section 7.

\subsection{Anisotropic Besov space}

Anisotropic smoothness related to processes given as in (A1) will
be measured by an anisotropic analogue of classical Besov spaces as
introduced below. For $\alpha_{1},\dots,\alpha_{d}$ given as in condition
(A1), we define an anisotropy $(a_{1},\dots,a_{d})$ and mean order
of smoothness $\overline{\alpha}>0$ by
\begin{align}
\frac{1}{\overline{\alpha}}=\frac{1}{d}\left(\frac{1}{\alpha_{1}}+\dots+\frac{1}{\alpha_{d}}\right),\qquad a_{i}=\frac{\overline{\alpha}}{\alpha_{i}},\ \ i=1,\dots,d.\label{MAIN:04}
\end{align}
More generally, we call a collection of numbers $a=(a_{1},\dots,a_{d})$
an anisotropy if it satisfies
\begin{align}
0<a_{1},\dots,a_{d}<\infty\qquad\text{ and }\qquad a_{1}+\dots+a_{d}=d.\label{EQ:35}
\end{align}
Note that $(a_{1},\dots,a_{d})$ given by \eqref{MAIN:04} is an anisotropy
in the sense of \eqref{EQ:35}.

Let $a=(a_{1},\dots,a_{d})$ be an anisotropy and take $\lambda>0$
with $\lambda/a_{k}\in(0,1)$ for all $k\in\{1,\dots,d\}$. The anisotropic
Besov space $B_{1,\infty}^{\lambda,a}(\R^{d})$ is defined as the
Banach space of functions $f:\R^{d}\longrightarrow\R$ with finite
norm
\begin{align}
\|f\|_{B_{1,\infty}^{\lambda,a}}:=\|f\|_{L^{1}(\R^{d})}+\sum\limits _{k=1}^{d}\sup\limits _{h\in[-1,1]}|h|^{-\lambda/a_{k}}\|\Delta_{he_{k}}f\|_{L^{1}(\R^{d})},\label{EQ:36}
\end{align}
where $\Delta_{h}f(x)=f(x+h)-f(x)$, $h\in\R^{d}$, and $e_{k}\in\R^{d}$
denotes the canonical basis vector in the $k$-the direction (see
\citep{D03} and \citep{T06} for additional details and references).
In the above definition, $\lambda/a_{k}$ describes the smoothness
in the $k$-th coordinate, its restriction to $(0,1)$
is not essential. Without this restriction we should use iterated
differences in \eqref{EQ:36} instead (see \citep[Theorem 5.8.(ii)]{T06}).

\subsection{Some notation}

For a $d\times d$ matrix $A$, we set $|A|=\sup_{|x|=1}|Ax|$. Then
$1/|A^{-1}|=\inf_{|x|=1}|Ax|$ with the convention that $1/|A^{-1}|:=0$
if $A$ is not invertible. Given another $d\times d$ matrix $B$,
we deduce that
\begin{align}
\frac{1}{|A^{-1}|}\leq|A|,\qquad\left|\frac{1}{|A^{-1}|}-\frac{1}{|B^{-1}|}\right|\leq|A-B|.\label{MATRIX:INEQ}
\end{align}
Here and below we denote by $C$ a generic positive constant which
may vary from line to line. Possible dependencies on other parameters
are denoted by $C=C(a,b,c,\dots)$. For $a,b\geq0$, we set $a\wedge b:=\min\{a,b\}$
and denote by $\lfloor a\rfloor$ the unique integer satisfying $a\leq\lfloor a\rfloor<a+1$.

Here and below we say that $X$ is a solution to \eqref{EQ:12}, if it is a weak solution to \eqref{EQ:12} in the following sense:
There exist a stochastic basis $(\Omega, \F, (\F_t)_{t \geq 0}, \P)$ with the usual conditions,
an $(\F_t)_{t \geq 0}$-L\'evy process $Z$,
and an $(\F_t)_{t \geq 0}$-adapted c\'adl\'ag process $X$ such that \eqref{EQ:12} is satisfied.
In order to simplify the notation, we simply write $X$ for the (weak) solution to \eqref{EQ:12}.
Note that under the conditions imposed in this work existence and uniqueness of solutions to \eqref{EQ:12}
not fully established.
The classical existence and uniqueness theory for stochastic equations with jumps is discussed, e.g., in
\citep{JS03,S05}, see also \citep{CZZ17} and \citep{XZ17} for some
recent results in this direction.

\subsection{General case}

We study \eqref{EQ:12} for bounded and Hölder continuous coefficients,
i.e., we suppose that:
\begin{enumerate}
\item[(A2)] $b,\sigma$ are bounded, and there exist $\beta\in[0,1]$, $\chi\in(0,1)$
and $C>0$ such that
\[
|b(x)-b(y)|\leq C|x-y|^{\beta},\qquad|\sigma(x)-\sigma(y)|\leq C|x-y|^{\chi},\ \ x,y\in\R^{d}.
\]
\end{enumerate}
Note that $\beta=0$ corresponds to the case where $b$ is only bounded.
Let $\alpha_{1},\ldots,\alpha_{d}$ be as in condition (A1). Set $\alpha^{\mathrm{min}}=\min\{\alpha_{1},\dots,\alpha_{d}\}$
and $\alpha^{\mathrm{max}}=\max\{\alpha_{1},\dots,\alpha_{d}\}$.
The following is our main result. \begin{Theorem}\label{THEOREM:01}
Let $Z$ be a Lévy process with Lévy measure $\nu$ and symbol \eqref{SYMBOL}.
Assume that $Z$ satisfies condition (A1) and there exist constants
$\gamma\in(0,2]$ and $\delta\in(0,\gamma]$ such that
\begin{align}
\int\limits _{\R^{d}}\left(\1_{\{|z|\leq1\}}|z|^{\gamma}+\1_{\{|z|>1\}}|z|^{\delta}\right)\nu(dz)<\infty.\label{MOMENT:00}
\end{align}
Finally, suppose that condition (A2) is satisfied, and the constants
$\gamma,\delta,\beta,\chi$ satisfy:
\begin{enumerate}
\item[(a)] If $\gamma\in[1,2]$, then suppose that
\begin{align}
\alpha^{\mathrm{min}}\left(1+\frac{\beta\wedge\delta}{\gamma}\right)>1,\qquad\frac{\alpha^{\mathrm{min}}}{\gamma}\left(1+\chi\wedge(\delta/\gamma)\right)>1.\label{EQ:01}
\end{align}
\item[(b)] If $\gamma\in(0,1)$, then suppose that
\begin{align}
\alpha^{\mathrm{min}}\left(1+\frac{\beta\wedge\chi}{\gamma}\right)>1,\qquad\alpha^{\mathrm{min}}\left(\frac{1}{\gamma}+\chi\right)>1,\qquad\alpha^{\mathrm{min}}+\beta\wedge\chi>1.\label{EQ:02}
\end{align}
\end{enumerate}
Let $(X(t))_{t\geq0}$ satisfy \eqref{EQ:12} and denote by $\mu_{t}$
the law of $X(t)$. Then there exists $\lambda\in(0,1)$ and
$C>0$ such that the measure $|\sigma^{-1}(x)|^{-1}\mu_{t}(dx)=g(x)dx$ with
\begin{align}
\|g_{t}\|_{B_{1,\infty}^{\lambda,a}}\leq\frac{C}{(1\wedge t)^{1/\alpha^{\mathrm{min}}}},\ \ \ t>0.\label{MAIN:03}
\end{align}
In particular, $\mu_{t}$ has a density on $\Gamma$
(defined in \eqref{MAIN:02}). 
\end{Theorem}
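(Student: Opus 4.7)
The plan is to adapt the Debussche–Fournier test-function strategy from \citep{DF13} to the anisotropic setting. Writing $g_t(x)\,dx=|\sigma^{-1}(x)|^{-1}\mu_t(dx)$, and using the difference-quotient characterisation of $B_{1,\infty}^{\lambda,a}(\R^d)$, estimate (\ref{MAIN:03}) reduces to proving that for every bounded Borel $\phi$, every $k\in\{1,\dots,d\}$ and every $|r|\le 1$,
\[
\mathcal{D}_k(r,\phi):=\biggl|\int_{\R^d}\!|\sigma^{-1}(x)|^{-1}\bigl(\phi(x+re_k)-\phi(x)\bigr)\mu_t(dx)\biggr| \le C\,\|\phi\|_\infty\,|r|^{\lambda/a_k}(1\wedge t)^{-1/\alpha^{\mathrm{min}}}.
\]
The $L^1$-part of the Besov norm is immediate from $|\sigma^{-1}|^{-1}\le|\sigma|\le\|\sigma\|_\infty$, which follows from (A2) and (\ref{MATRIX:INEQ}).

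The central device is coefficient freezing on a short terminal interval. Fix $\eta\in(0,t\wedge 1]$ to be optimised, set $s=t-\eta$, and define
\[
Y := X(s)+b(X(s))\eta+\sigma(X(s))\bigl(Z(t)-Z(s)\bigr),\qquad R := X(t)-Y.
\]
Conditionally on $\F_s$, $Y$ is an affine image of the noise increment $Z(t)-Z(s)$, whose law has density $f_\eta$. Plugging in the stochastic-integral representation of $R$ and using (A2) together with the moment condition (\ref{MOMENT:00}), one obtains $\E[|R|^\rho]\le C\,\eta^{1+\kappa}$ for a suitable $\rho\in(0,\gamma]$ and some exponent $\kappa=\kappa(\alpha^{\mathrm{min}},\beta,\chi,\gamma,\delta)>0$. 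The dichotomy between $\gamma\in[1,2]$ and $\gamma\in(0,1)$ arises because in the former regime the compensated small-jump integral is estimated via a Burkholder–Davis–Gundy/$L^2$-argument, whereas in the latter one relies on its absolute variation; the hypotheses (\ref{EQ:01}) and (\ref{EQ:02}) are calibrated exactly so that $\kappa>0$ in each regime.

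The main estimate transfers the coordinate-wise regularity of $f_\eta$ to $Y$. For any bounded $\psi$, a change of variables yields
\[
\bigl|\E[\psi(Y+re_k)-\psi(Y)\mid\F_s]\bigr| \le \|\psi\|_\infty\!\int|f_\eta(z-v)-f_\eta(z)|\,dz,\qquad v=\sigma(X(s))^{-1}re_k.
\]
Telescoping $v$ along the canonical basis and invoking (\ref{EXAMPLE:10}) on each single-direction difference gives
\[
\bigl|\E[\psi(Y+re_k)-\psi(Y)\mid\F_s]\bigr| \le C\,\|\psi\|_\infty\,|\sigma^{-1}(X(s))|\,|r|\,\eta^{-1/\alpha^{\mathrm{min}}},
\]
and after the outer expectation the factor $|\sigma^{-1}(X(s))|$ is absorbed by the weight $|\sigma^{-1}(X(t))|^{-1}$ in $g_t$, up to an error controlled by (\ref{MATRIX:INEQ}) and the Hölder modulus of $\sigma$. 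Writing $X(t)=Y+R$ and running the same telescoping identity with the random shift $\sigma(X(s))^{-1}R$ in place of $\sigma(X(s))^{-1}re_k$, then splitting over $\{|R|\le\eta^{1/\alpha^{\mathrm{min}}}\}$ and its complement and invoking the moment bound on $R$, produces
\[
\mathcal{D}_k(r,\phi)\le C\,\|\phi\|_\infty\bigl(|r|\,\eta^{-1/\alpha^{\mathrm{min}}}+\eta^{\theta}\bigr)
\]
for some $\theta>0$. Optimising $\eta\in(0,t\wedge 1]$ (choose $\eta^{\theta+1/\alpha^{\mathrm{min}}}\sim|r|$ when this lies below $t\wedge 1$, otherwise $\eta=t\wedge 1$) yields (\ref{MAIN:03}) for some $\lambda\in(0,1)$, and the existence of a density on $\Gamma$ follows at once because the weight is strictly positive there.

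The principal obstacle is that $\phi$ is merely bounded and one has only the weak moment $|z|^\gamma\wedge|z|^\delta$ on the noise, so the naive estimate $|\phi(X(t))-\phi(Y)|\le\|\nabla\phi\|\cdot|R|$ is unavailable. The key idea is to route the residual shift $R$ through $f_\eta$ rather than through $\phi$, thereby recovering regularity from the noise itself; balancing the resulting error against the anisotropic main term of rate $\eta^{-1/\alpha^{\mathrm{min}}}$ is the technical heart of the argument, and it is exactly at this balancing step that both inequalities in (\ref{EQ:01}) and (\ref{EQ:02}) are simultaneously required.
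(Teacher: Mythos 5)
Your high-level plan (coefficient freezing, then transferring the noise's anisotropic regularity to the law of $X(t)$, and optimizing the freezing time against the error) is the right skeleton and matches the paper's, but there are two concrete gaps that would block this proof.

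\textbf{First gap: the reduction to bounded test functions cannot be made good.} You reduce \eqref{MAIN:03} to the estimate
$\mathcal{D}_k(r,\phi)\le C\|\phi\|_\infty|r|^{\lambda/a_k}(1\wedge t)^{-1/\alpha^{\mathrm{min}}}$ for \emph{bounded Borel} $\phi$. If you could prove that for all bounded $\phi$, the $B_{1,\infty}^{\lambda,a}$ control would follow immediately by $L^1$--$L^\infty$ duality. But the freezing scheme cannot deliver it without using smoothness of $\phi$: after writing $X(t)=Y+R$ you have to control $\phi(Y+R+re_k)-\phi(Y+re_k)$ and $\phi(Y+R)-\phi(Y)$, and for merely bounded $\phi$ these are $O(\|\phi\|_\infty)$, not $O(\|\phi\|_\infty|R|^{\text{something}})$. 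You recognise this obstacle yourself and propose to ``route $R$ through $f_\eta$'', i.e.\ to run a telescoping identity with the random shift $\sigma(X(s))^{-1}R$ instead of $\sigma(X(s))^{-1}re_k$. This does not work: (A1) lets you trade a \emph{deterministic}\footnote{Or more precisely an $\F_s$-measurable shift, since the change of variables is performed inside the conditional expectation given $\F_s$.} shift of $Y$ for an $L^1$ modulus of $f_\eta$, because conditionally on $\F_s$, $Y$ is an affine image of $Z(t)-Z(s)$. But $R=X(t)-Y$ depends on the entire path $(Z(u))_{u\in[s,t]}$, so it is not $\F_s$-measurable and is not independent of $Z(t)-Z(s)$. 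Splitting over $\{|R|\le \eta^{1/\alpha^{\min}}\}$ and its complement doesn't repair this: on the ``small'' event you still have no way to turn smallness of $R$ into smallness of $\E[\phi(Y+R)-\phi(Y)]$ when $\phi$ is only bounded. This is precisely why the paper works with the anisotropic Hölder class $C_b^{\eta,a}$ rather than $L^\infty$: the approximation error term is bounded by $\|\phi\|_{C_b^{\eta,a}}\max_j|X_j(t)-X^\e_j(t)|^{\eta/a_j}$, and the associated loss of regularity in $\phi$ is absorbed later by the bootstrapping/mollification in Lemma~\ref{LEMMA:02}. You need that lemma (or an equivalent) as the pivot of the argument; the simple duality you implicitly invoke is stronger than what the method produces.

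\textbf{Second gap: the $\gamma\in(0,1)$ approximation.} For $\gamma\in(0,1)$, the one-step Euler approximation $Y=X(s)+b(X(s))\eta+\sigma(X(s))(Z(t)-Z(s))$ you use throughout is \emph{not} accurate enough. In that regime the compensator $t\int_{|z|\le1}z\,\nu(dz)$ is the dominant part of $Z$, and the martingale small-jump piece has much smaller moments: Proposition~\ref{PROP:04}(a) gives $\E[|X(t)-X(s)|^\eta]\le C(t-s)^\eta$, i.e.\ the pathwise drift is dominant at order $\eta$, not $\eta/\gamma$. To beat the threshold $\e^{-1/\alpha^{\min}}$ coming from the noise regularity, the paper replaces the single-step Euler map by a multi-step Euler scheme $W^\e$ for the ODE with modified drift $\widetilde{b}(x)=b(x)-\sigma(x)\int_{|z|\le1}z\,\nu(dz)$ on a finer sub-grid of step $\tau=\e^{1/(1-\beta\wedge\chi)}$, producing the extra factor $\e^{1/(1-\beta\wedge\chi)}$ in the rate $\kappa$ of \eqref{KAPPA:01}, and this is where the third inequality $\alpha^{\min}+\beta\wedge\chi>1$ in \eqref{EQ:02} comes from. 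Your description of the $\gamma\in(0,1)$ case as differing only in whether one uses an $L^2$/BDG argument or absolute variation for the small-jump integral misses this essential modification; without the sub-grid Euler scheme, the approximation error is too large and the parameter balance fails.

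Once the test class is corrected to $C_b^{\eta,a}$ and the $\gamma\in(0,1)$ approximation is upgraded to the multi-step construction, the rest of your outline (the telescoping in the noise variable, absorbing the $|\sigma^{-1}|$-weight, correcting $|\sigma^{-1}(X(t))|^{-1}$ to $|\sigma^{-1}(X(t-\e))|^{-1}$ via \eqref{MATRIX:INEQ} and the Hölder modulus of $\sigma$, and optimizing $\e=|h|^{c_i}(1\wedge t)$) aligns with the paper's Propositions~\ref{PROP:00}, \ref{PROP:04}, \ref{PROP:01} and the conclusion of Section~5.
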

The proof of this statement
is given in Section 5. Without assuming finite moments for the big
jumps as in \eqref{MOMENT:00}, we still get existence of a density
but may lose Besov regularity. 
\begin{Corollary}\label{THEOREM:02}
Let $Z$ be a Lévy process with Lévy measure $\nu$ and symbol \eqref{SYMBOL}.
Assume that $Z$ satisfies condition (A1) and there exists $\gamma\in(0,2]$
such that
\[
\int\limits _{|z|\leq1}|z|^{\gamma}\nu(dz)<\infty.
\]
Moreover, assume that condition (A2) is satisfied. Finally suppose
that either condition \eqref{EQ:01} holds for $\delta=\gamma$ or
\eqref{EQ:02} is satisfied. Let $(X(t))_{t\geq0}$ satisfy
\eqref{EQ:12} and denote by $\mu_{t}$ its distribution.
Then $\mu_{t}$ has a density when restricted to $\Gamma$. 
\end{Corollary}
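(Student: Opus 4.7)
My plan is to reduce to Theorem \ref{THEOREM:01} by truncating the large jumps of $Z$. Fix $R>1$ and split $\nu = \nu_1 + \nu_2$ with $\nu_1 := \nu|_{\{|z|\le R\}}$ and $\nu_2 := \nu|_{\{|z|>R\}}$. Since $\nu_2$ is a finite measure of total mass $\lambda := \nu_2(\R^d)$, the L\'evy--It\^o decomposition produces independent L\'evy processes $Z^{(1)}, Z^{(2)}$ with L\'evy measures $\nu_1, \nu_2$ such that $Z = Z^{(1)} + Z^{(2)}$; here $Z^{(2)}$ is compound Poisson and $\nu_1$ has bounded support, so \eqref{MOMENT:00} is satisfied by $\nu_1$ with $\delta = \gamma$.

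The first substantive step is to check that $Z^{(1)}$ still satisfies (A1) with the same exponents $\alpha_1,\dots,\alpha_d$. Since $\Psi_{\nu_2}$ is bounded, $e^{+t\Psi_{\nu_2}(\xi)}$ is the Fourier transform of the finite signed measure
\[
\rho_t := e^{\lambda t}\sum_{n=0}^{\infty}\frac{(-\lambda t)^{n}}{n!}\,(\nu_2/\lambda)^{*n}, \qquad \|\rho_t\|_{\mathrm{TV}}\le e^{2\lambda t},
\]
and $\rho_t$ inverts $\mu_t^{(2)} := \mathrm{Law}(Z^{(2)}(t))$ in the convolution algebra, $\mu_t^{(2)}*\rho_t = \delta_0$. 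Convolving the identity $f_t = f_t^{(1)}*\mu_t^{(2)}$ with $\rho_t$ yields $f_t^{(1)} = f_t*\rho_t$, and hence
\[
\|\Delta_{he_k} f_t^{(1)}\|_{L^1(\R^d)} \le e^{2\lambda t}\,\|\Delta_{he_k} f_t\|_{L^1(\R^d)},
\]
from which (A1) for $Z^{(1)}$ with the same exponents follows by letting $t\to 0$.

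Next I would apply Theorem \ref{THEOREM:01} to any weak solution of the $Z^{(1)}$-driven SDE starting at an arbitrary point $x \in \R^d$; the assumption on $(\beta,\chi,\gamma)$ in the corollary is precisely what is required by Theorem \ref{THEOREM:01} with $\delta=\gamma$. This gives that the corresponding time-$t$ law $\mu_t^{(1)}(x,\cdot)$ admits a density on $\Gamma$. To transfer this back to $X$, let $0 < \tau_1 < \tau_2 < \dots$ be the jump times of $Z^{(2)}$ and set $N_t := \#\{k : \tau_k \le t\}$; then
\[
\P(X_t \in \cdot) = \sum_{n=0}^{\infty}\P(X_t \in \cdot,\ N_t = n).
\]
On $\{N_t=0\}$ the trajectory $(X_s)_{s\le t}$ is itself a weak solution to the $Z^{(1)}$-driven SDE, and its time-$t$ marginal is therefore absolutely continuous on $\Gamma$; for $n\ge 1$, the strong Markov property at the last big-jump time $\tau_n$ identifies the conditional law of $X_t$ given $(\tau_n, X_{\tau_n})=(s,y)$ with $\mu_{t-s}^{(1)}(y,\cdot)$, which is again absolutely continuous on $\Gamma$. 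Integrating out the conditioning and summing over $n$ produces the claim.

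The main obstacle is the first substantive step, that is, preservation of (A1) upon removing the compound-Poisson component $Z^{(2)}$; this is handled by the signed-measure inversion exhibited above. The remaining interlacing/strong-Markov bookkeeping is standard for L\'evy-driven SDEs and does not require new ideas, though some care is needed because the $Z^{(1)}$-driven SDE is not assumed to have unique weak solutions—one works with an arbitrary such solution started from an arbitrary initial point.
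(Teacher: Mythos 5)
Your proof is correct and follows the same truncation-and-interlacing route that the paper invokes by reference to \citep{DF13}: split off the compound-Poisson part carrying the large jumps, apply Theorem \ref{THEOREM:01} to the dynamics driven by the truncated noise, and pass back to $X$ by conditioning on the big-jump times. The one step you supply explicitly that the paper's one-line proof elides is the verification that (A1) passes to the truncated process $Z^{(1)}$: Remark 2.1(iii) only covers \emph{adding} an independent component, and removal is not immediate because (A1) constrains the density of $Z(t)$ rather than the L\'evy measure (which is what makes this step trivial in \citep{DF13}). Your inversion of $\mathrm{Law}(Z^{(2)}(t))$ by the finite signed measure $\rho_t$ with $\|\rho_t\|_{\mathrm{TV}}\le e^{2\lambda t}$ handles this cleanly, and since the factor $e^{2\lambda t}\to1$ as $t\to0$ it does not affect the $\limsup$ in \eqref{EXAMPLE:10}; you should just present it so as to first deduce from $\mu^{(1)}_t = \mu^{(1)}_t * \mu^{(2)}_t * \rho_t = (f_t * \rho_t)\,dx$ that $Z^{(1)}(t)$ has a density at all, rather than writing $f_t = f_t^{(1)} * \mu_t^{(2)}$ before $f_t^{(1)}$ is known to exist. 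The interlacing bookkeeping is indeed standard; one point worth making explicit is that after conditioning on $\tau_n$, Theorem \ref{THEOREM:01} is applied at the deterministic time $t-\tau_n$ and its proof only uses the solution on $[0,t-\tau_n]$, so the possibly finite random horizon $\tau_{n+1}-\tau_n$ (and the absence of weak uniqueness, which you correctly flag) pose no real obstruction.
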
 
\begin{proof} Repeat the arguments given in \citep[Corollary 1.3]{DF13}. 
\end{proof} Let us briefly
comment on the appearance of different conditions for $\gamma\in(0,1)$
and $\gamma\in[1,2]$. If $\gamma\in(0,1)$, then $Z$ has the representation
$Z(t)=Y(t)-t\int_{|z|\leq1}z\nu(dz)$, where $(Y(t))_{t\geq0}$ is
a Lévy process with symbol
\[
\Psi_{Y}(\xi)=\int\limits _{\R^{d}}\left(1-e^{i(\xi\cdot z)}\right)\nu(dz),\ \ \xi\in\R^{d}.
\]
Note that in this case $(Y(t))_{t\geq0}$ satisfies, by Lemma \ref{LEMMA:00}.(b), for any $\eta\leq\delta\leq\gamma<1$,
\[
\E\left[\left(\sum\limits _{u\in[s,t]}|\Delta Y(u)|\right)^{\eta}\right]\leq C(t-s)^{\eta/\gamma},\ \ 0\leq s\leq t\leq s+1.
\]
Hence, the drift $t\int_{|z|\leq1}z\nu(dz)$ is the
dominating part of the Lévy process $Z$, which requires that the
approximation used in the proof of this work has to be adapted appropriately.
For this purpose we employ some ideas taken from \citep{DF13}. A
similar problem was also encountered in \citep{P15}, \citep{K18}
and \citep{KK18}, where similar conditions to \eqref{EQ:02} have
been used.

We close this section with the following remarks. \begin{Remark}
\begin{enumerate}
\item[(a)] Using additional moment estimates, it is not difficult to extend
all results obtained in this work also to the case where $b,\sigma$
have at most linear growth, see \cite{FJR18} for such arguments.
\item[(b)] It is also possible to study the case where the noise is absent in
certain directions, i.e., equations of the form
\begin{align*}
dX(t) & =b(X(t),Y(t))dt,\\
dY(t) & =c(X(t),Y(t))dt+\sigma(X(t),Y(t))dZ(t).
\end{align*}
Under suitable assumptions, the technique used in
this paper can be adapted to prove existence of a density for $Y(t)$,
$t>0$. Such type of equations have also been studied
in \citep{Z14,HM16,Z16,Z17}.
\end{enumerate}
\end{Remark}

\subsection{Diagonal case}

If $\sigma$ in \eqref{EQ:12} is diagonal, one may expect that previous conditions are too rough. 
Below we show that this is indeed the case, i.e., we study the system of stochastic equations
\begin{align}
dX_{k}(t)=b_{k}(X(t))dt+\sigma_{k}(X(t-))dZ_{k}(t),\ \ t\geq0,\ \ k\in\{1,\dots,d\},\label{EXAMPLE:09}
\end{align}
where $\sigma_{1},\dots,\sigma_{d}:\R^{d}\longrightarrow\R$ and the
drift $b$ satisfy:
\begin{enumerate}
\item[(A3)] $b=(b_{1},\dots,b_{d})$ and $\sigma= \mathrm{diag}(\sigma_{1},\dots,\sigma_{d})$
are bounded, and there exist $\beta_{1},\dots,\beta_{d}\in[0,1]$,
$\chi_{1},\dots,\chi_{d}\in(0,1)$ and $C>0$ such that
\[
|b_{k}(x)-b_{k}(y)|\leq C|x-y|^{\beta_{k}},\qquad|\sigma_{k}(x)-\sigma_{k}(y)|\leq C|x-y|^{\chi_{k}}.
\]
\end{enumerate}
Recall that the anisotropy $(a_{1},\dots,a_{d})$
and mean order of smoothness $\overline{\alpha}$ have been defined
in \eqref{MAIN:04}. The following is our main result in this case.
\begin{Theorem}\label{THEOREM:03} Let $Z$ be a Lévy process with
Lévy measure $\nu$ and symbol as in \eqref{SYMBOL}. Assume that
$Z$ satisfies condition (A1) and, for each $k\in\{1,\dots,d\}$,
there exist $\gamma_{k}\in(0,2]$ and $\delta_{k}\in(0,\gamma_{k}]$
such that
\begin{align}
\int\limits _{\R^{d}}\left(\1_{\{|z|\leq1\}}|z_{k}|^{\gamma_{k}}+\1_{\{|z|>1\}}|z_{k}|^{\delta_{k}}\right)\nu(dz)<\infty,\qquad k\in\{1,\dots,d\}.\label{EQ:05}
\end{align}
Set $\gamma=\max\{\gamma_{1},\dots,\gamma_{d}\}$ and $\delta=\min\{\delta_{1},\dots,\delta_{d}\}$.
Assume that condition (A3) is satisfied and, for each $k\in\{1,\dots,d\}$,
\begin{enumerate}
\item[(a)] if $\gamma_{k}\in[1,2]$, suppose that
\begin{align}
\alpha_{k}\left(1+\frac{\beta_{k}\wedge\delta}{\gamma}\right)>1,\qquad\alpha_{k}\left(\frac{1}{\gamma_{k}}+\frac{\chi_{k}\wedge(\delta/\gamma_{k})}{\gamma}\right)>1;\label{EQ:03}
\end{align}
\item[(b)] if $\gamma_{k}\in(0,1)$, suppose that $\alpha^{\mathrm{min}}+\min\{\beta_{1}\wedge\chi_{1},\dots,\beta_{d}\wedge\chi_{d}\}>1$
and
\begin{align}
\alpha_{k}\left(1+\frac{\beta_{k}\wedge\chi_{k}}{\gamma}\right)>1,\ \ \alpha_{k}\left(\frac{1}{\gamma_{k}}+\frac{\chi_{k}}{\max\{1,\gamma\}}\right)>1.\label{EQ:06}
\end{align}
\end{enumerate}
Let $(X(t))_{t\geq0}$ satisfy \eqref{EXAMPLE:09} and denote by $\mu_{t}$
the law of $X_{t}$. Then there exists $\lambda\in(0,1)$ such that
$|\sigma^{-1}(x)|^{-1}\mu_{t}(dx)=g(x)dx$ with $g\in B_{1,\infty}^{\lambda,a}(\R^{d})$
satisfying \eqref{MAIN:03}. In particular, $\mu_{t}$ has a density
on
\[
\Gamma=\{x\in\R^{d}\ |\ \sigma_{k}(x)\neq0,\ \ k\in\{1,\dots,d\}\}.
\]
\end{Theorem}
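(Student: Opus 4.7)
The plan is to adapt the proof strategy of Theorem \ref{THEOREM:01} (itself based on the method of \citep{DF13}) to the diagonal case, taking full advantage of the fact that $\sigma = \mathrm{diag}(\sigma_1,\dots,\sigma_d)$ lets shifts of $X(t)$ in direction $e_k$ be absorbed by shifts of $Z_k(t)-Z_k(t-\varepsilon)$ alone. By a duality characterization of $B^{\lambda,a}_{1,\infty}$, it suffices to bound, for each $k$, each $\phi \in C_b(\mathbb{R}^d)$ with $\|\phi\|_\infty \leq 1$, and each $h \in [-1,1]$,
\[
\left|\mathbb{E}\!\left[|\sigma^{-1}(X(t))|^{-1}\bigl(\phi(X(t)+he_k)-\phi(X(t))\bigr)\right]\right| \leq \frac{C|h|^{\lambda/a_k}}{(1\wedge t)^{1/\alpha^{\mathrm{min}}}}.
\]

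First I introduce the standard freezing approximation on the window $[t-\varepsilon,t]$,
\[
Y_\varepsilon(t) \;=\; X(t-\varepsilon) + \varepsilon\, b(X(t-\varepsilon)) + \sigma(X(t-\varepsilon))\bigl(Z(t)-Z(t-\varepsilon)\bigr),
\]
with the appropriate modification when $\gamma_k \in (0,1)$ to replace $Z$ by its compensated small-jump part $Y$, exactly as in the discussion after Corollary \ref{THEOREM:02}. The diagonality of $\sigma$ is then used in a decisive way: a shift by $h e_k$ of $Y_\varepsilon(t)$ coincides with a shift of $Z(t)-Z(t-\varepsilon)$ by $(h/\sigma_k(X(t-\varepsilon)))\, e_k$, which lies in the same coordinate direction. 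Hence condition (A1), applied to the increment $Z(t)-Z(t-\varepsilon)$ of length $\varepsilon$, yields
\[
\mathbb{E}\!\left[\bigl|\phi(Y_\varepsilon(t)+he_k)-\phi(Y_\varepsilon(t))\bigr|\;\big|\;\mathcal{F}_{t-\varepsilon}\right] \;\leq\; C\,\|\phi\|_\infty\,\frac{|h|}{|\sigma_k(X(t-\varepsilon))|}\,\varepsilon^{-1/\alpha_k}.
\]
Observe that the exponent $\alpha_k$, not $\alpha^{\mathrm{min}}$, appears here; this is the per-coordinate gain over Theorem \ref{THEOREM:01}. The weight $|\sigma^{-1}(X(t))|^{-1}$ is transferred to $|\sigma_k(X(t-\varepsilon))|$ using \eqref{MATRIX:INEQ}, the Hölder regularity of $\sigma_k$ with exponent $\chi_k$, and moment estimates on $|X(t)-X(t-\varepsilon)|$ through Lemma \ref{LEMMA:00}, which contribute a factor $\varepsilon^{\chi_k \wedge(\delta/\gamma_k)/\gamma}$ or $\varepsilon^{\chi_k/\max\{1,\gamma\}}$ depending on whether $\gamma_k \geq 1$ or $\gamma_k < 1$.

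Next I estimate the error $X(t)-Y_\varepsilon(t)$ coordinate by coordinate. In the $k$-th component one controls the drift remainder via $\beta_k$ and the diffusion remainder via $\chi_k$, both tested against the $\gamma$/$\delta$ moments of \eqref{EQ:05}; this is where the exponents $\beta_k\wedge\delta$ and $\chi_k\wedge(\delta/\gamma_k)$ in \eqref{EQ:03}, and the exponents $\beta_k\wedge\chi_k$, $\chi_k$ in \eqref{EQ:06}, emerge. Balancing $\varepsilon = |h|^r$ so that the gain $\varepsilon^{-1/\alpha_k}$ is compensated by the error contributions yields an attainable exponent $\lambda/a_k$ with $\lambda>0$ exactly when the two (or three) per-coordinate inequalities in (a)/(b) hold strictly. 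The additional condition $\alpha^{\mathrm{min}} + \min_k(\beta_k\wedge\chi_k) > 1$ in case (b) is needed to ensure that the moment bound on $|X(t)-X(t-\varepsilon)|$, obtained by a Gronwall/Picard argument in the presence of the dominating small-jump drift, closes properly.

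The final step is to pick a single $\lambda \in (0,1)$ smaller than each coordinate exponent so that \eqref{MAIN:03} holds simultaneously in all $k$, then conclude existence of a density on $\Gamma$ by the embedding of $B^{\lambda,a}_{1,\infty}(\mathbb{R}^d)$ into $L^1_{\mathrm{loc}}$. The main obstacle I foresee is the bookkeeping for case (b): when $\gamma_k < 1$ and different $\gamma_{k'}$'s straddle the value $1$, the freezing step must treat each component with its own compensated decomposition while the global error estimate couples all coordinates through $X$, so verifying that the balancing $\varepsilon = |h|^r$ is consistent across $k$ and delivers $\lambda>0$ uniformly requires a careful simultaneous reading of \eqref{EQ:06} together with the joint moment bound on $|X(t)-X(t-\varepsilon)|$.
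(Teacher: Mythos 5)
Your conceptual flow matches the paper's: freeze the coefficients on $[t-\varepsilon,t]$, exploit diagonality so that a shift of $X^\varepsilon(t)$ in direction $e_k$ is absorbed by a shift of $Z_k(t)-Z_k(t-\varepsilon)$, apply (A1) conditionally to pick up the factor $\varepsilon^{-1/\alpha_k}$ (the per-coordinate improvement over Theorem \ref{THEOREM:01}), estimate the freezing error coordinate by coordinate using \eqref{EQ:05}, balance $\varepsilon=|h|^{c_k}$, and pass through an integration-by-parts-by-differences lemma. This is precisely Propositions \ref{PROP:05} and \ref{PROP:06} together with the diagonal version of Proposition \ref{PROP:01}, followed by Lemma \ref{LEMMA:02}. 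Your diagnosis of why $\alpha^{\mathrm{min}}+\min_k(\beta_k\wedge\chi_k)>1$ is required in case (b) is also correct.

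There is, however, a genuine gap in the duality step. You claim it suffices to prove
\[
\left|\mathbb{E}\!\left[|\sigma^{-1}(X(t))|^{-1}\bigl(\phi(X(t)+he_k)-\phi(X(t))\bigr)\right]\right|\leq\frac{C|h|^{\lambda/a_k}}{(1\wedge t)^{1/\alpha^{\mathrm{min}}}}
\]
for all $\phi\in C_b(\R^d)$ with $\|\phi\|_\infty\leq 1$. Such a bound cannot be proved by the freezing method: when you replace $X(t)$ by $X^\varepsilon(t)$, the residual term
\[
\mathbb{E}\left[|\sigma^{-1}(X(t-\varepsilon))|^{-1}\bigl|\Delta_{he_k}\phi(X(t))-\Delta_{he_k}\phi(X^\varepsilon(t))\bigr|\right]
\]
is $O(1)$ if $\phi$ is merely bounded, regardless of how close $X^\varepsilon(t)$ is to $X(t)$. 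The whole approach hinges on $\phi$ lying in the anisotropic H\"older--Zygmund space $C_b^{\eta,a}(\R^d)$, so that this residual is bounded by $\|\phi\|_{C_b^{\eta,a}}\sum_{j}\mathbb{E}\bigl[|X_j(t)-X_j^\varepsilon(t)|^{\eta/a_j}\bigr]$ and Propositions \ref{PROP:05}/\ref{PROP:06} deliver a power $\varepsilon^{\eta\kappa_j/a_j}$. This is exactly why Lemma \ref{LEMMA:02} is formulated with $C_b^{\eta,a}$ test functions and a nontrivial mollification/iteration argument converting the H\"older-tested estimate into a genuine $B^{\lambda,a}_{1,\infty}$ bound; a naive $L^\infty$--$L^1$ duality is too coarse and the proof would not close. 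Relatedly, your bound is stated with exponent $\lambda/a_k$, whereas the correct target in \eqref{EQ:32} is $(\lambda+\eta)/a_k$, with $\eta>0$ the H\"older order of the test function; the extra $\eta/a_k$ is then absorbed in the statement of Lemma \ref{LEMMA:02}.
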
 
A proof of this Theorem is given in Section 6.
Note that in condition (A3) one may always replace $\beta_1,\dots,\beta_d$ by $\min\{\beta_1,\dots, \beta_d\}$ and 
$\chi_1,\dots, \chi_d$ by $\min\{\chi_1,\dots, \chi_d\}$. However, conditions \eqref{EQ:03} and \eqref{EQ:06} would 
become more restrictive in this case. Hence, it is not artificial to assume different H\"older regularity for different components.
We have the following analogue of Corollary \ref{THEOREM:02}. 
\begin{Corollary}\label{THEOREM:04}
Let $Z$ be a Lévy process with Lévy measure $\nu$ and symbol as
in \eqref{SYMBOL}. Assume that $Z$ satisfies condition (A1) and,
for each $k\in\{1,\dots,d\}$, there exist $\gamma_{k}\in(0,2]$ such
that
\begin{align}
\int\limits _{|z|\leq1}|z_{k}|^{\gamma_{k}}\nu(dz)<\infty.\label{APPROX:08}
\end{align}
Assume that condition (A3) holds and, for $k\in\{1,\dots,d\}$,
\begin{enumerate}
\item[(a)] if $\gamma_{k}\in[1,2]$, then suppose that, for $\gamma_{*}=\min\{\gamma_{1},\dots,\gamma_{d}\}$,
\begin{align*}
\alpha_{k}\left(1+\frac{\beta_{k}}{\gamma}\right)>1,\qquad\alpha_{k}\left(\frac{1}{\gamma_{k}}+\frac{\chi_{k}\wedge(\gamma_{*}/\gamma_{k})}{\gamma}\right)>1;
\end{align*}
\item[(b)] if $\gamma_{k}\in(0,1)$, then suppose that $\alpha^{\mathrm{min}}+\min\{\beta_{1}\wedge\chi_{1},\dots,\beta_{d}\wedge\chi_{d}\}>1$
and
\[
\alpha_{k}\left(1+\frac{\beta_{k}\wedge\chi_{k}}{\gamma}\right)>1,\ \ \alpha_{k}\left(\frac{1}{\gamma_{k}}+\frac{\chi_{k}}{\max\{1,\gamma\}}\right)>1.
\]
\end{enumerate}
Let $(X(t))_{t\geq0}$ satisfy \eqref{EXAMPLE:09} and denote by $\mu_{t}$
the law of $X_{t}$. Then $\mu_{t}$ has a density on
\[
\Gamma=\{x\in\R^{d}\ |\ \sigma_{k}(x)\neq0,\ \ k\in\{1,\dots,d\}\}.
\]
\end{Corollary}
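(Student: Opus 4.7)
The plan is to adapt the argument of \citep[Corollary 1.3]{DF13} used for Corollary \ref{THEOREM:02} to the diagonal anisotropic setting: truncate the big jumps of $Z$ at some level $R \geq 1$, apply Theorem \ref{THEOREM:03} to the truncated SDE, and transfer existence of a density back to $X$ by conditioning on the number of big jumps.

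Concretely, for $R \geq 1$, I would decompose $Z = Z^{(R)} + J^{(R)}$, where $Z^{(R)}$ is the Lévy process with Lévy measure $\nu|_{\{|z|\leq R\}}$ (with the appropriate compensating drift) and $J^{(R)}$ is an independent compound Poisson process of intensity $\lambda_R = \nu(\{|z|>R\}) < \infty$. Since $Z^{(R)}$ has bounded jumps, condition \eqref{EQ:05} holds automatically with any $\delta_k \in (0,\gamma_k]$, so in particular I may take $\delta_k = \gamma_k$ and $\delta = \gamma_*$. The first technical step is to verify that $Z^{(R)}$ still satisfies (A1) with the same exponents $\alpha_k$: from $Z(t) = Z^{(R)}(t) + J^{(R)}(t)$ with independent summands and the atom $e^{-\lambda_R t}\delta_0$ in the law of $J^{(R)}(t)$, the densities are related by $f_t = e^{-\lambda_R t} f_t^{(R)} + q_t * f_t^{(R)}$ for a positive measure $q_t$ of mass $1 - e^{-\lambda_R t}$. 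Applying $\Delta_{he_k}$ and $\|\cdot\|_{L^1}$ yields a self-bounding inequality which, for sufficiently small $t$, gives $\|\Delta_{he_k} f_t^{(R)}\|_{L^1} \leq C \|\Delta_{he_k} f_t\|_{L^1}$, so that $Z^{(R)}$ inherits (A1) from $Z$.

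With (A1) verified for $Z^{(R)}$, Theorem \ref{THEOREM:03} provides a density on $\Gamma$ for the law $\mu_t^{(R)}$ of the solution $X^{(R)}$ of \eqref{EXAMPLE:09} driven by $Z^{(R)}$. To transfer the conclusion to $X$, let $\tau_R$ be the first jump time of $J^{(R)}$ and $N_t^{(R)}$ the number of such jumps in $[0,t]$. On $\{N_t^{(R)} = 0\}$, which has positive probability, $X$ coincides in law with $X^{(R)}$ on $[0,t]$ by independence of $Z^{(R)}$ and $J^{(R)}$, so the conditional law of $X(t)$ given this event equals $\mu_t^{(R)}$ and hence is absolutely continuous on $\Gamma$. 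For $n \geq 1$, an induction on $N_t^{(R)}$, using the strong Markov property at each big-jump time to restart the construction from $X(\tau_R -) + \sigma(X(\tau_R -))\Delta Z(\tau_R)$, extends the same conclusion to $\{N_t^{(R)} = n\}$; summing over $n$ yields absolute continuity of $\mu_t$ on $\Gamma$.

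The main obstacle will be the verification of (A1) for $Z^{(R)}$, since the relation between $f_t$ and $f_t^{(R)}$ is only implicit and the resulting self-bounding inequality is valid only for $t \leq t_0(R)$, with the general case following by iteration via the semigroup property. A secondary subtlety arises in matching hypotheses: when $\beta_k > \gamma_*$, the corollary's condition $\alpha_k(1 + \beta_k/\gamma) > 1$ is strictly weaker than $\alpha_k(1 + (\beta_k \wedge \gamma_*)/\gamma) > 1$ required by Theorem \ref{THEOREM:03} with $\delta = \gamma_*$; closing this gap likely requires revisiting the relevant steps of the proof of Theorem \ref{THEOREM:03} with the truncated process in hand, in the spirit of \citep{DF13}, rather than invoking Theorem \ref{THEOREM:03} as a black box.
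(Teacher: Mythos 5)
Your proposal follows the same truncation-and-conditioning route that the paper intends: Corollary \ref{THEOREM:04} is stated as an analogue of Corollary \ref{THEOREM:02}, whose proof is a one-line reference to \citep[Corollary 1.3]{DF13}, i.e. remove the big jumps of $Z$, apply the Besov-regularity theorem to the truncated driving process, and recover absolute continuity of $\mu_t$ on $\Gamma$ by conditioning on the Poisson process of big-jump times. Your self-bounding argument that the truncated process $Z^{(R)}$ inherits (A1) (for small $t$, which is all that \eqref{EXAMPLE:10} asks) is a clean way to supply a step the paper leaves implicit. The secondary subtlety you flagged is genuine and well-spotted: taking $\delta_k = \gamma_k$, hence $\delta = \gamma_*$, in Theorem \ref{THEOREM:03} produces the condition $\alpha_k\left(1 + (\beta_k\wedge\gamma_*)/\gamma\right) > 1$, which is strictly stronger than the corollary's $\alpha_k\left(1 + \beta_k/\gamma\right) > 1$ when $\beta_k > \gamma_*$, so Theorem \ref{THEOREM:03} cannot be invoked as a black box. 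The resolution you anticipate is the right one and does go through: for the truncated driving noise the moment bounds of Propositions \ref{PROP:05}.(a) and \ref{PROP:06}.(a) extend to all $\eta \in (0,1]$ (for $\gamma_j < 1$ one uses $\int_{|z|\leq R}|z_j|\,\nu(dz) < \infty$ together with the concavity of $x \mapsto x^{\eta}$ to get $\E[|X_j(t)-X_j(s)|^{\eta}] \leq C(t-s)^{\eta}$), so in the estimate of $R_1$ one can replace $\beta_i\wedge\delta$ by $\beta_i$, which recovers exactly the exponent $\eta(1+\beta_i/\gamma)$ stated in the corollary; the cap $\wedge(\gamma_*/\gamma_k)$ must however be retained for $\chi_k$, since the corresponding exponent $\gamma_k\chi_k$ may exceed $1$, and the paper keeps it for precisely this reason.
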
 Let us close the presentation of our results with
one additional remark. 
\begin{Remark} 
Using similar ideas to the
proof of Theorem \ref{THEOREM:03}, it is possible
to obtain sharper results for the case where $\sigma$ has a block
structure so that $\sigma= \mathrm{diag}(\sigma_{1},\ldots,\sigma_{m})$
with $\sigma_{j}:\R^{I_{j}}\longrightarrow\R^{I_{j}}$, $j\in\{1,\dots,m\}$,
and $Z$ is given as in Example \ref{MAIN:EXAMPLE}. 
\end{Remark}

\subsection{Structure of the work}

This work is organized as follows. In Section 3 we discuss the particular
case where $Z$ is either given by (Z1) or (Z2) and also
the case where the drift vanishes. In Section 4 we introduce
our main technical tool of this work. Section 5 is devoted to the
proof of Theorem \ref{THEOREM:01}, while Theorem \ref{THEOREM:03}
is proved in Section 6. Some additional sufficient conditions and
examples for condition (A1) are discussed in Section 7. Basic estimates
on stochastic integrals with respect to Lévy processes are discussed
in the appendix.

\section{Two special cases}

\subsection{The case of $\alpha$-stable laws}

We start with the particular case where $Z$ is given as in (Z1).
\begin{Remark} Suppose that the Lévy process $Z$ is given as in
(Z1). Then (A1) is satisfied for $\alpha=\alpha_{1}=\dots=\alpha_{d}$,
and \eqref{MOMENT:00} holds for any choice of $\gamma\in(\alpha,2]$
and $\delta\in(0,\alpha)$. Hence conditions \eqref{EQ:01} and \eqref{EQ:02} are reduced to: 
\begin{enumerate}
\item[(a)] If $\alpha\in[1,2)$, then \eqref{EQ:01} is automatically satisfied.
\item[(b)] If $\alpha\in(0,1)$, then \eqref{EQ:02} is implied by $\alpha+\beta\wedge\chi>1$.
This condition coincides with the one considered in \citep[Theorem 1.1]{DF13}.
\end{enumerate}
Hence we recover, for $Z$ given as in (Z1), the results obtained
in \citep{DF13}. \end{Remark} If $Z$ is given as in (Z2), we have
the following. \begin{Remark}\label{REMARK:10} Let $Z=(Z_{1},\dots,Z_{d})$
be a Lévy process given by (Z2).
\begin{enumerate}
\item[(a)] Suppose that (A2) is satisfied. Then \eqref{MOMENT:00}
holds for any $\gamma\in(\alpha^{\mathrm{max}},2]$
and $\delta\in(0,\alpha^{\mathrm{min}})$. Hence the assumptions
of Theorem \ref{THEOREM:01} are satisfied, provided:
\begin{enumerate}
\item[(i)] if $\alpha^{\mathrm{max}}\in[1,2)$, it holds that
\[
    \alpha^{\mathrm{min}} + \frac{\alpha^{\mathrm{min}}}{\alpha^{\mathrm{max}}} \left(\beta \wedge \alpha^{\mathrm{min}}\right) > 1, \qquad
    \frac{\alpha^{\mathrm{min}}}{\alpha^{\mathrm{max}}}\left( 1 + \chi \wedge \left( \frac{\alpha^{\mathrm{min}}}{\alpha^{\mathrm{max}}}\right) \right) > 1.
   \]

\item[(ii)] if $\alpha^{\mathrm{max}}\in(0,1)$, it holds that
\[
\frac{\alpha^{\mathrm{min}}}{\alpha^{\mathrm{max}}}+\alpha^{\mathrm{min}}\chi>1, \qquad
 \alpha^{\mathrm{min}}+ \frac{\alpha^{\mathrm{min}}}{\alpha^{\mathrm{max}}} (\beta\wedge\chi) > 1.
\]
\end{enumerate}
\item[(b)] Suppose that (A3) is satisfied and $\sigma$ is diagonal. Then \eqref{EQ:05}
is satisfied for $\gamma_{k}\in(\alpha_{k},2]$ and $\delta_{k}\in(0,\alpha_{k})$.
In particular, Theorem \ref{THEOREM:03} is applicable, provided for
each $k\in\{1,\dots,d\}$ with $\alpha_{k}\in(0,1)$, one has
\[
\alpha^{\mathrm{min}}+\min\{\chi_{1}\wedge\beta_{1},\dots,\chi_{d}\wedge\beta_{d}\}>1,\qquad\alpha_{k}+\frac{\alpha_{k}}{\alpha^{\mathrm{max}}}\left(\beta_{k}\wedge\chi_{k}\right)>1.
\]
Note that, for $k\in\{1,\dots,d\}$ and $\alpha_{k}\in[1,2)$, no
additional restriction on the parameters has to be imposed.
\end{enumerate}
\end{Remark} Such conditions provide reasonable restrictions on $\alpha_{1},\dots,\alpha_{d}$
and on the order of Hölder continuity. These conditions are more likely
to be satisfied in one of the following two cases:
\begin{enumerate}
\item[(i)] $\alpha^{\mathrm{min}}/\alpha^{\mathrm{max}}$ is large enough, i.e.,
$\alpha_{1},\dots,\alpha_{d}$ don't differ too much.
\item[(ii)] $\alpha^{\mathrm{min}}$ is large enough.
\end{enumerate}

\subsection{The case of zero drift}

In this part we briefly comment on some particular cases where the
drift vanishes. \begin{Remark} Suppose that $b=0$. By inspection
of the proofs (see Section 5 and Section 6) we easily deduce that
the restrictions imposed within Theorem \ref{THEOREM:01} and Theorem
\ref{THEOREM:03} can be slightly improved:
\begin{enumerate}
\item[(a)] Condition \eqref{EQ:01} can be replaced by
\begin{align*}
\frac{\alpha^{\mathrm{min}}}{\gamma}\left(1+\chi\wedge(\delta/\gamma)\right)>1,
\end{align*}
while \eqref{EQ:02} can be replaced by
\begin{align*}
\alpha^{\mathrm{min}}\left(\frac{1}{\gamma}+\chi\right)>1,\qquad\alpha^{\mathrm{min}}+\chi>1.
\end{align*}
\item[(b)] Condition \eqref{EQ:03} can be replaced by
\begin{align*}
\alpha_{k}\left(\frac{1}{\gamma_{k}}+\frac{\chi_{k}\wedge(\delta/\gamma_{k})}{\gamma}\right)>1,
\end{align*}
while \eqref{EQ:06} can be replaced by
\begin{align*}
\alpha^{\mathrm{min}}+\min\{\chi_{1},\dots,\chi_{d}\}>1,\qquad\alpha_{k}\left(\frac{1}{\gamma_{k}}+\frac{\chi_{k}}{\max\{1,\gamma\}}\right)>1.
\end{align*}
\end{enumerate}
\end{Remark} Finally, let us consider the particular case where $b=0$,
i.e.,
\begin{align}
dX(t)=\sigma(X(t))dZ(t),\ \ \ t>0,\label{EXAMPLE:00}
\end{align}
where $Z$ is given as in (Z2) and $\sigma$ is bounded and uniformly
elliptic. The following results for \eqref{EXAMPLE:00} are known:
\begin{enumerate}
\item[(i)] If $\sigma$ is continuous and $\alpha_{1}=\dots=\alpha_{d}$, then
existence and uniqueness in law was shown in \citep{BC06}.
\item[(ii)] If $\sigma$ is Hölder continuous, diagonal and $\alpha_{1}=\dots=\alpha_{d}$,
then the corresponding Markov process $X(t)$ has a transition density
for which also sharp two-sided estimates have been obtained (see \citep{KR17}).
\item[(iii)] If $\sigma$ is continuous and diagonal, but $\alpha_{1},\dots,\alpha_{d}$
may be different, then still existence and uniqueness in law holds,
see \citep{C18}.
\end{enumerate}
Below we obtain existence of densities to \eqref{EXAMPLE:00} also
applicable when $\alpha_{1},\dots,\alpha_{d}$ are different and $\sigma$
is not diagonal. 
\begin{Corollary}\label{THEOREM:00} 
Let $Z$ be given by (Z2). Suppose that $\sigma$ is bounded, uniformly elliptic, diagonal with
$\sigma = \mathrm{diag}(\sigma_1,\dots, \sigma_d)$, and there exists $\chi \in (0,1)$ such that $|\sigma(x) - \sigma(y)| \leq C |x-y|$ for all $x,y \in \R^d$.
 Finally,
\begin{enumerate}
\item[(a)] if $\alpha^{\mathrm{max}}\in[1,2)$, assume that $\frac{\alpha^{\mathrm{min}}}{\alpha^{\mathrm{max}}}>\frac{1}{1+\chi\wedge\left(\frac{\alpha^{\mathrm{min}}}{\alpha^{\mathrm{max}}}\right)}$;
\item[(b)] if $\alpha^{\mathrm{max}}\in(0,1)$, assume that $\alpha^{\mathrm{min}}>\frac{1}{1+\chi}$.
\end{enumerate}
Let $(X(t))_{t\geq0}$ satisfy \eqref{EQ:12} and denote by $\mu_{t}$
the law of $X_{t}$. Then there exists $\lambda\in(0,1)$ such that $\mu_{t}$ has a density $g_{t}\in B_{1,\infty}^{\lambda,a}(\R^{d})$
and this density satisfies \eqref{MAIN:03}. 
\end{Corollary}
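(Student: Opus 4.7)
The plan is to reduce the statement to an application of Theorem \ref{THEOREM:01} (equivalently, to the diagonal version Theorem \ref{THEOREM:03}) combined with the zero-drift improvements recorded in the preceding remark. First I would verify the three structural hypotheses required by those theorems. Condition (A1) follows from Example \ref{MAIN:EXAMPLE}(ii): since $Z$ is given by (Z2), its density factorises as a product of one-dimensional rotationally symmetric $\alpha_{k}$-stable densities, so condition \eqref{EXAMPLE:30} holds with the given $\alpha_{1},\dots,\alpha_{d}$. Condition (A2) (and (A3)) is immediate: $b=0$ is trivially bounded and one may take $\beta$ arbitrary, while $\sigma$ is bounded, uniformly elliptic, diagonal, and $\chi$-Hölder by assumption.

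Next I would examine the moment condition \eqref{MOMENT:00}. For (Z2) the Lévy measure in \eqref{EQ:14} is supported on the coordinate axes with tail $|z_{j}|^{-1-\alpha_{j}}$, so $\int_{|z|\leq 1}|z|^{\gamma}\nu(dz)<\infty$ for every $\gamma>\alpha^{\mathrm{max}}$ and $\int_{|z|>1}|z|^{\delta}\nu(dz)<\infty$ for every $\delta<\alpha^{\mathrm{min}}$. Hence one may pick $\gamma\in(\alpha^{\mathrm{max}},2]$ arbitrarily close to $\alpha^{\mathrm{max}}$ and $\delta\in(0,\alpha^{\mathrm{min}})$ arbitrarily close to $\alpha^{\mathrm{min}}$.

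It now remains to check that the hypotheses (a) and (b) of the corollary, together with the freedom in the choice of $\gamma,\delta$, imply the zero-drift versions of \eqref{EQ:01} and \eqref{EQ:02}. In case (a), since $b=0$ the first inequality in \eqref{EQ:01} disappears and one only needs
\[
\frac{\alpha^{\mathrm{min}}}{\gamma}\left(1+\chi\wedge(\delta/\gamma)\right)>1.
\]
Passing to the limit $\gamma\downarrow\alpha^{\mathrm{max}}$, $\delta\uparrow\alpha^{\mathrm{min}}$, this tends to $\frac{\alpha^{\mathrm{min}}}{\alpha^{\mathrm{max}}}\bigl(1+\chi\wedge(\alpha^{\mathrm{min}}/\alpha^{\mathrm{max}})\bigr)$, which exceeds $1$ by hypothesis (a); by continuity of the left-hand side in $(\gamma,\delta)$ a valid pair can be chosen. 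In case (b), the zero-drift replacement of \eqref{EQ:02} reads
\[
\alpha^{\mathrm{min}}\left(\frac{1}{\gamma}+\chi\right)>1,\qquad\alpha^{\mathrm{min}}+\chi>1.
\]
With $\gamma\downarrow\alpha^{\mathrm{max}}<1$, the first inequality becomes $\alpha^{\mathrm{min}}\bigl(1/\alpha^{\mathrm{max}}+\chi\bigr)>1$, which is implied by $\alpha^{\mathrm{min}}(1+\chi)>1$ (that is, by hypothesis (b)) since $1/\alpha^{\mathrm{max}}>1$; the second is similarly implied because $\chi\geq\alpha^{\mathrm{min}}\chi$. Again a valid $\gamma$ can be selected by continuity.

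Once the hypotheses of Theorem \ref{THEOREM:01} are in force, the theorem produces $\lambda\in(0,1)$ and $g_{t}$ satisfying \eqref{MAIN:03} on $\R^{d}$; uniform ellipticity of $\sigma$ implies $|\sigma^{-1}(x)|^{-1}$ is bounded below by a positive constant, so the factor can be absorbed and $\mu_{t}$ itself admits a density $g_{t}\in B_{1,\infty}^{\lambda,a}(\R^{d})$ obeying \eqref{MAIN:03}. The only mildly delicate point, which I expect to be the main obstacle, is the continuity/limit argument needed to choose $(\gamma,\delta)$ in the open admissible ranges from the closed hypotheses stated in (a) and (b); everything else is a direct substitution into the already established theorems.
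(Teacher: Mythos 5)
Your proof is correct and follows essentially the same route as the paper's: verify (A1) via Example~\ref{MAIN:EXAMPLE}(ii), observe that \eqref{MOMENT:00} holds for (Z2) with $\gamma$ arbitrarily close to $\alpha^{\mathrm{max}}$ from above and $\delta$ arbitrarily close to $\alpha^{\mathrm{min}}$ from below, and then reduce to Theorem~\ref{THEOREM:01} through the zero-drift remark, choosing $(\gamma,\delta)$ by continuity. The paper's own proof is a one-liner that only writes out case (b) and leaves the $(\gamma,\delta)$ selection and the absorption of the weight $|\sigma^{-1}(x)|^{-1}$ (which uses uniform ellipticity and the $\chi$-H\"older continuity of $\sigma$ together with a small enough $\lambda$) implicit; you spell out precisely these steps, so the two arguments agree in substance.
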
 
\begin{proof}
If $\alpha^{\mathrm{max}}\in(0,1)$, then $\alpha^{\mathrm{min}}>\frac{1}{1+\chi}$
implies that $\frac{\alpha^{\mathrm{min}}}{\alpha^{\mathrm{max}}}+\alpha^{\mathrm{min}}\chi>1$
and $\alpha^{\mathrm{min}}+\chi>1$ hold. The assertion follows from
Theorem \ref{THEOREM:01}. 
\end{proof} 
If $\sigma$ is, in addition,
diagonal, then we obtain the following. 
\begin{Corollary}
Let $Z$ be given by (Z2) and suppose that $\sigma$ is bounded, uniformly elliptic and diagonal with
$\sigma = \mathrm{diag}(\sigma_1,\dots, \sigma_d)$. Assume that each $\sigma_i$, $i = 1,\dots, d$, is $\chi$-H\"older continuous
for some $\chi \in (0,1)$. If $\alpha^{\mathrm{max}} \in (0,1)$ suppose, in addition, that
\[
 \alpha^{\mathrm{min}} + \min\{\chi_{1},\cdots,\chi_{d}\} > 1.
\]
Then there exists $\lambda\in(0,1)$ such that any solution $(X(t))_{t\geq0}$ to \eqref{EQ:12} has a
density $g_{t}\in B_{1,\infty}^{\lambda,a}(\R^{d})$ and this density
satisfies \eqref{MAIN:03}.
\end{Corollary}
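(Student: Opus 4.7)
The plan is to deduce this statement as a direct specialization of Theorem \ref{THEOREM:03} to the Lévy process (Z2) in the zero-drift setting. Since $b\equiv 0$, the drift-dependent parts of \eqref{EQ:03} and \eqref{EQ:06} drop out and only the improved inequalities listed in the preceding Remark about zero drift need to be verified. Uniform ellipticity of the diagonal $\sigma$ gives $\Gamma = \R^d$, so once Theorem \ref{THEOREM:03} applies we obtain the density $g_t$ on all of $\R^d$ together with the bound \eqref{MAIN:03}.

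First I would verify the integrability condition \eqref{EQ:05}. Because the Lévy measure $\nu$ in (Z2) is supported on the coordinate axes (see \eqref{EQ:14}), both integrals $\int_{|z|\le 1}|z_k|^{\gamma_k}\nu(dz)$ and $\int_{|z|>1}|z_k|^{\delta_k}\nu(dz)$ reduce to one-dimensional integrals against $c_{\alpha_k}|z_k|^{-1-\alpha_k}dz_k$, which are finite precisely when $\gamma_k > \alpha_k$ and $\delta_k < \alpha_k$. In particular, we have the freedom to send $\gamma_k \to \alpha_k^{+}$ and $\delta_k \to \alpha_k^{-}$ simultaneously for every index, so that $\gamma := \max_k \gamma_k$ can be taken arbitrarily close to $\alpha^{\mathrm{max}}$ and $\delta := \min_k \delta_k$ arbitrarily close to $\alpha^{\mathrm{min}}$.

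Next I would verify the Hölder-type conditions. When $\alpha^{\mathrm{max}} \in (0,1)$, all $\gamma_k$ can be chosen in $(\alpha_k,1)$ so that case (b) of Theorem \ref{THEOREM:03} applies; its zero-drift improvement requires $\alpha^{\mathrm{min}}+\min_k\chi_k>1$ (the standing assumption) and $\alpha_k(\gamma_k^{-1}+\chi_k/\max\{1,\gamma\})>1$ for every $k$. Since $\gamma < 1$ here, we have $\max\{1,\gamma\}=1$, and as $\gamma_k \to \alpha_k^{+}$ the left-hand side of the second inequality tends to $1+\alpha_k\chi_k > 1$; hence continuity allows us to pick $\gamma_k$ close enough to $\alpha_k$ so that the strict inequality is retained. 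When $\alpha^{\mathrm{max}} \in [1,2)$ one chooses $\gamma_k > \alpha_k$ slightly above $\alpha_k$ (with $\gamma_k \ge 1$ when $\alpha_k \ge 1$ and otherwise using whichever of case (a) or case (b) is admissible); the zero-drift version of the relevant inequality then collapses to $1 + \alpha_k(\chi_k \wedge (\delta/\gamma_k))/\gamma > 1$ in the limit, which is strict and therefore retained for admissible parameter choices.

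The one slightly subtle point is that all per-index inequalities must hold \emph{simultaneously} for a single global pair $(\gamma,\delta)$, and it is only in the zero-drift setting that the $\beta$-terms disappear and all thresholds become strict in the limit $\gamma_k\to\alpha_k^{+}$, $\delta_k\to\alpha_k^{-}$. Once this is established, Theorem \ref{THEOREM:03} applies directly and yields existence of $\lambda \in (0,1)$ and a density $g_t \in B_{1,\infty}^{\lambda,a}(\R^d)$ satisfying \eqref{MAIN:03}, completing the proof.
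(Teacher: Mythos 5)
Your overall strategy---specialize Theorem \ref{THEOREM:03} together with the zero-drift improvements from the Remark preceding \eqref{EXAMPLE:00} to the L\'evy process (Z2), letting $\gamma_k \searrow \alpha_k$ and $\delta_k \nearrow \alpha_k$---is indeed what the paper intends (no proof is printed), and your treatment of the two ``pure'' regimes $\alpha^{\mathrm{max}} < 1$ and $\alpha^{\mathrm{min}} \geq 1$ is correct. However, there is a genuine gap in the mixed regime $\alpha^{\mathrm{min}} < 1 \leq \alpha^{\mathrm{max}}$, which your phrase ``using whichever of case (a) or case (b) is admissible'' does not actually resolve.

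Fix an index $k$ with $\alpha_k < 1$. If you take $\gamma_k \in (\alpha_k, 1)$, the zero-drift version of case (b) of Theorem \ref{THEOREM:03} requires $\alpha^{\mathrm{min}} + \min\{\chi_1,\dots,\chi_d\} > 1$; but the Corollary imposes this only when $\alpha^{\mathrm{max}} \in (0,1)$, so it is not available here. If instead you force $\gamma_k \geq 1$ so as to stay in case (a), then necessarily $\gamma > \alpha^{\mathrm{max}} \geq 1$ (since $\gamma_{j^*} > \alpha_{j^*} = \alpha^{\mathrm{max}}$ for the extremal index), and the required inequality
\[
\alpha_k \left( \frac{1}{\gamma_k} + \frac{\chi_k \wedge (\delta/\gamma_k)}{\gamma} \right) > 1
\]
with $\gamma_k \geq 1 > \alpha_k$ is bounded above, as $\gamma_k \searrow 1$, $\delta \nearrow \alpha^{\mathrm{min}}$, $\gamma \searrow \alpha^{\mathrm{max}}$, by $\alpha_k + \alpha_k(\chi_k \wedge \alpha^{\mathrm{min}})/\alpha^{\mathrm{max}}$, which need not exceed $1$: take $d=2$, $\alpha_1 = \tfrac{1}{2}$, $\alpha_2 = \tfrac{3}{2}$, $\chi_1 = \chi_2 = \tfrac{1}{2}$, giving $\tfrac{1}{2}(1 + \tfrac{1}{3}) = \tfrac{2}{3} < 1$ while also $\alpha^{\mathrm{min}} + \min_j\chi_j = 1 \not> 1$. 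Thus neither case (a) nor case (b) is admissible in this example under the stated hypotheses. To close the argument you must either impose $\alpha^{\mathrm{min}} + \min_j\chi_j > 1$ whenever $\alpha^{\mathrm{min}} < 1$ (the condition ``$\alpha^{\mathrm{max}} \in (0,1)$'' in the Corollary should very likely read ``$\alpha^{\mathrm{min}} \in (0,1)$''; note that when $\alpha^{\mathrm{min}} \geq 1$ the inequality is vacuous), or produce a new argument not passing through the case distinction of Theorem \ref{THEOREM:03}---your write-up does neither.
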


\section{Main technical tool}
Existence of a density for solutions to \eqref{EQ:12} is, in the
isotropic case, essentially based on a discrete integration
by parts lemma formulated for the difference operator $\Delta_{h}$
acting on the classical Hölder-Zygmund space (see \citep[Lemma 2.1]{DF13}).
Such density belongs, by construction, to an isotropic Besov space.
In this work we use an anisotropic version of this lemma which is
designed for Lévy processes satisfying condition (A1).

The anisotropic Hölder-Zygmund space $C_{b}^{\lambda,a}(\R^{d})$ is defined as the Banach space of functions $\phi$ with finite norm
\[
\|\phi\|_{C_{b}^{\lambda,a}}=\|\phi\|_{\infty}+\sum\limits _{k=1}^{d}\sup\limits _{h\in[-1,1]}|h|^{-\lambda/a_{k}}\|\Delta_{he_{k}}\phi\|_{\infty}.
\]
The following is our main technical tool for the existence of a density.
\begin{Lemma}\label{LEMMA:02} Let $a=(a_{1},\dots,a_{d})$ be an
anisotropy in the sense of \eqref{EQ:35} and $\lambda,\eta>0$ be
such that $(\lambda+\eta)/a_{k}\in(0,1)$ for all $k=1,\dots,d$.
Suppose that $\mu$ is a finite measure over $\R^{d}$ and there exists
$A>0$ such that, for all $\phi\in C_{b}^{\eta,a}(\R^{d})$ and all
$k=1,\dots,d$,
\begin{align}
\left|\int\limits _{\R^{d}}(\phi(x+he_{k})-\phi(x))\mu(dx)\right|\leq A\|\phi\|_{C_{b}^{\eta,a}}|h|^{(\lambda+\eta)/a_{k}},\ \ \forall h\in[-1,1].\label{EQ:32}
\end{align}
Then $\mu$ has a density $g$ with respect to the Lebesgue measure and
\begin{align}
\|g\|_{B_{1,\infty}^{\lambda,a}}\leq\mu(\R^{d})+3dA(2d)^{\eta/\lambda}\left(1+\frac{\lambda}{\eta}\right)^{1+\frac{\eta}{\lambda}}.\label{MAIN:05}
\end{align}
\end{Lemma}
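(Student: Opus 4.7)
The plan is to adapt the isotropic argument of \citep[Lemma 2.1]{DF13} to the anisotropic setting by combining a suitable mollification of $\mu$ with a duality argument. Fix a nonnegative, symmetric $\rho\in C_c^\infty(\R^d)$ with $\int\rho=1$ and, for $\delta\in(0,1]$, define the anisotropic mollifier $\rho_\delta(x)=\delta^{-d}\rho(x_1\delta^{-a_1},\dots,x_d\delta^{-a_d})$. Since $a_1+\cdots+a_d=d$, this is a probability density, and a change of variables yields the key estimate $\|\Delta_{te_k}\rho_\delta\|_{L^1}\leq C\min(1,|t|\delta^{-a_k})$. Set $g_\delta:=\rho_\delta*\mu\in L^1(\R^d)\cap C^\infty(\R^d)$ with $\|g_\delta\|_{L^1}=\mu(\R^d)$; note that $g_\delta\,dx$ converges weak-$*$ to $\mu$ as $\delta\to 0$.

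For $\phi\in L^\infty$ with $\|\phi\|_\infty\leq 1$ and $\psi:=\phi*\rho_\delta$, Fubini's theorem gives $\int\phi(x)\Delta_{he_k}g_\delta(x)\,dx=\int\Delta_{-he_k}\psi(y)\,d\mu(y)$. The mollifier estimate above implies $\|\psi\|_\infty\leq 1$ and, by distinguishing $|t|\leq\delta^{a_j}$ from $|t|>\delta^{a_j}$, $\sup_{|t|\leq 1}|t|^{-\eta/a_j}\|\Delta_{te_j}\psi\|_\infty\leq C\delta^{-\eta}$ for every $j$. Hence $\|\psi\|_{C_b^{\eta,a}}\leq 1+Cd\delta^{-\eta}$, and applying hypothesis \eqref{EQ:32} to $\psi$ followed by taking supremum over $\phi$ yields the fundamental inequality
\[
\|\Delta_{he_k}g_\delta\|_{L^1}\leq A\bigl(1+Cd\delta^{-\eta}\bigr)|h|^{(\lambda+\eta)/a_k},\qquad |h|\leq 1.
\]

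To convert this into a statement about $\mu$ itself, we balance the regularization scale $\delta$ against the increment $h$: choosing $\delta=\delta(h,k)\sim|h|^{1/a_k}$ in each direction $k$ gives $\|\Delta_{he_k}g_{\delta(h,k)}\|_{L^1}\leq C'A|h|^{\lambda/a_k}$, and a careful optimisation of the two competing terms $\delta^{-\eta}|h|^{(\lambda+\eta)/a_k}$ and $|h|^{\lambda/a_k}$ produces the explicit constant $3dA(2d)^{\eta/\lambda}(1+\lambda/\eta)^{1+\eta/\lambda}$ appearing in \eqref{MAIN:05}. Uniform $L^1$-boundedness of $\{g_\delta\}$ together with this anisotropic modulus of continuity (and tightness inherited from finiteness of $\mu$) gives precompactness in $L^1(\R^d)$ via the Fréchet-Kolmogorov-Riesz theorem; the extracted limit $g$ coincides with the density of $\mu$ by uniqueness of weak-$*$ limits, and lower semicontinuity of the $B^{\lambda,a}_{1,\infty}$-seminorm transfers the bound to $g$.

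The principal obstacle is that the bound in Step~2 is not uniform in $\delta$, so that a single mollification scale cannot serve for all increments $h$. The remedy is precisely the direction- and size-dependent choice $\delta\sim|h|^{1/a_k}$, which respects the anisotropy through the exponent $a_k$; it is this tailoring that both recovers the correct Besov regularity $|h|^{\lambda/a_k}$ and produces the peculiar numerical constant $(1+\lambda/\eta)^{1+\eta/\lambda}$ arising from an elementary one-variable optimisation in $\delta$.
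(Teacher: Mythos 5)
Your scaffolding matches the paper's: an anisotropic mollifier at scale $\delta$, a duality step giving $\|\Delta_{he_k}(\mu\ast\rho_\delta)\|_{L^1}\leq CA\delta^{-\eta}|h|^{(\lambda+\eta)/a_k}$, and Kolmogorov--Riesz at the end. But the middle step --- converting the $\delta$-dependent bound on mollified densities into the $\delta$-free bound $\|\Delta_{he_k}g\|_{L^1}\leq C|h|^{\lambda/a_k}$ --- has a genuine gap. Choosing $\delta=\delta(h,k)\sim|h|^{1/a_k}$ bounds $\|\Delta_{he_k}g_{\delta(h,k)}\|_{L^1}$, where the \emph{function itself} moves with $h$; this is not a uniform modulus of continuity for the family $\{g_\delta\}$, and Kolmogorov--Riesz requires precisely $\lim_{|h|\to0}\sup_\delta\|\Delta_h g_\delta\|_{L^1}=0$, which your inequality (blowing up as $\delta\to0$ at each fixed $h$) does not supply. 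Nor does your ``optimisation'' actually produce the constant: substituting $\delta=\e|h|^{1/a_k}$ into your bound gives $A\bigl(|h|^{(\lambda+\eta)/a_k}+Cd\,\e^{-\eta}|h|^{\lambda/a_k}\bigr)$, which is monotone in $\e$, so there is nothing to trade off and no way to obtain $(1+\lambda/\eta)^{1+\eta/\lambda}$.

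What is missing is the self-improving iteration that is the heart of the paper's Step~2. One first assumes $\mu$ has a smooth density $g$ with $\nabla g\in L^1$ and writes $\|\Delta_h g\|_{L^1}\leq\sum_k\|\Delta_{h_ke_k}(g\ast\varphi_r)\|_{L^1}+2d\,\|g\ast\varphi_r-g\|_{L^1}$, then controls the error term $\|g\ast\varphi_r-g\|_{L^1}$ by an averaged version of the very quantity being bounded, namely $(2r)^{-d}\int_{|u|_a<r}\|\Delta_u g\|_{L^1}\,du$. Introducing $S_1=\sup_{|h|_a\leq1}|h|_a^{-\lambda}\|\Delta_h g\|_{L^1}$ and choosing $r=\e t$ with $\e^\lambda<(2d)^{-1}$ closes the inequality into $S_1\leq 3d^2A\,\e^{-\eta}+2d\,\e^\lambda S_1$; the genuine competition is now between $\e^{-\eta}$ and $(1-2d\e^\lambda)^{-1}$, and optimising over $\e$ gives exactly the stated constant. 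Only after this $\delta$-independent bound is proved for smooth densities does one mollify $\mu$ by a fixed smooth kernel $G_n$, observe that $g_n=\mu\ast G_n$ inherits hypothesis \eqref{EQ:32} with the same $A$, apply the bound uniformly in $n$, and then invoke Kolmogorov--Riesz and weak convergence. Without this bootstrap your chain of reasoning breaks between ``bound the mollified density'' and ``bound the density''.
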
 \begin{proof} Given an anisotropy $a$ as above, define
the corresponding anisotropic (maximum-)norm on $\R^{d}$ by $|x|_{a}=\max\{|x_{1}|^{1/a_{1}},\dots,|x_{d}|^{1/a_{d}}\}$.
We show the assertion in 3 steps.

\textit{Step 1.} For $r\in(0,1]$ let $\varphi_{r}(x)=(2r)^{-d}\1_{\{|x|_{a}<r\}}=(2r)^{-d}\1_{\{|x_{1}|<r^{a_{1}}\}}\cdots\1_{\{|x_{d}|<r^{a_{d}}\}}$.
Fix $\psi\in L^{\infty}(\R^{d})$. Then for each $j$ and $h\in[-1,1]$
we have
\begin{align*}
|(\psi\ast\varphi_{r})(x+he_{j})-(\psi\ast\varphi_{r})(x)| & \leq\|\psi\|_{\infty}\int\limits _{\R^{d}}|\varphi_{r}(x+he_{j}-z)-\varphi_{r}(x-z)|dz\\
 & \leq\frac{\|\psi\|_{\infty}}{2r^{a_{j}}}\int\limits _{\R}|\1_{\{|x_{j}+h-z|<r^{a_{j}}\}}-\1_{\{|x_{j}-z|<r^{a_{j}}\}}|dz\\
 & \leq2\min\left\{ 1,\frac{|h|}{r^{a_{j}}}\right\} \|\psi\|_{\infty}\leq2r^{-\eta}|h|^{\eta/a_{j}}\|\psi\|_{\infty}.
\end{align*}
Since also $\|\psi\ast\varphi_{r}\|_{\infty}\leq\|\psi\|_{\infty}$,
we obtain $\|\psi\ast\varphi_{r}\|_{C_{b}^{\eta,a}}\leq3d\|\psi\|_{\infty}r^{-\eta}$
and hence
\begin{align*}
\left|\int\limits _{\R^{d}}\psi(x)[(\mu\ast\varphi_{r})(x+he_{j})-(\mu\ast\varphi_{r})(x)]dx\right| & =\left|\int\limits _{\R^{d}}\left[(\psi\ast\varphi_{r})(z-he_{j})-(\psi\ast\varphi_{r})(z)\right]\mu(dz)\right|\\
 & \leq A\|\psi\ast\varphi_{r}\|_{C_{b}^{\eta,a}}|h|^{(\eta+\lambda)/a_{j}}\\
 & \leq3dA\|\psi\|_{\infty}r^{-\eta}|h|^{(\eta+\lambda)/a_{j}}.
\end{align*}
By duality this implies that for all $j=1,\dots,d$ and $h\in[-1,1]$,
\begin{align}
\int\limits _{\R^{d}}|(\mu\ast\varphi_{r})(x+he_{j})-(\mu\ast\varphi_{r})(x)|dx\leq3dAr^{-\eta}|h|^{(\eta+\lambda)/a_{j}}.\label{PROOF:LEMMA02:00}
\end{align}
\textit{Step 2.} Let us first suppose that $\mu$ has a density $g\in C^{1}(\R^{d})$
with $\nabla g\in L^{1}(\R^{d})$. Then we obtain, for any $h\in\R^{d}$
with $|h|_{a}\leq1$,
\begin{align*}
\int\limits _{\R^{d}}|g(x+h)-g(x)|dx & \leq\sum\limits _{k=1}^{d}\int\limits _{\R^{d}}|g(x+h_{k}e_{k})-g(x)|dx\\
 & \leq\sum\limits _{k=1}^{d}\int\limits _{\R^{d}}|(g\ast\varphi_{r})(x+h_{k}e_{k})-(g\ast\varphi_{r})(x)|dx+2d\int\limits _{\R^{d}}|(g\ast\varphi_{r})(x)-g(x)|dx\\
 & \leq3dAr^{-\eta}\sum\limits _{k=1}^{d}|h_{k}|^{(\eta+\lambda)/a_{k}}+\frac{2d}{(2r)^{d}}\int\limits _{\R^{d}}\int\limits _{\R^{d}}|g(y)-g(x)|\1_{\{|x-y|_{a}<r\}}dydx\\
 & \leq3d^{2}Ar^{-\eta}|h|_{a}^{\eta+\lambda}+\frac{2d}{(2r)^{d}}\int\limits _{\{|u|_{a}<r\}}\int\limits _{\R^{d}}|g(x+u)-g(x)|dxdu,
\end{align*}
where we have used \eqref{PROOF:LEMMA02:00}. For $t\in(0,1]$, set
$I_{t}:=\sup_{|h|_{a}=t}\int_{\R^{d}}|g(x+h)-g(x)|dx$ and $S_{t}:=\sup_{s\in(0,t]}s^{-\lambda}I_{s}$.
Then, using $\int_{\R^{d}}|u|_{a}^{\lambda}\1_{\{|u|_{a}<r\}}du\leq(2r)^{d}r^{\lambda}$
and $S_{|u|_{a}}\leq S_{1}$ for $|u|_{a}\leq1$, we obtain
\begin{align*}
t^{-\lambda}I_{t} & \leq3d^{2}A\left(\frac{t}{r}\right)^{\eta}+\frac{2d}{(2r)^{d}}t^{-\lambda}\int\limits _{\R^{d}}|u|_{a}^{\lambda}\1_{\{|u|_{a}<r\}}S_{|u|_{a}}du\\
 & \leq3d^{2}A\left(\frac{t}{r}\right)^{\eta}+2d\left(\frac{r}{t}\right)^{\lambda}S_{1}.
\end{align*}
Letting $r=\e t$ with $0<\e^{\lambda}<(2d)^{-1}$ and then taking
the supremum over $t\in(0,1]$ on both sides yield $S_{1}\leq3d^{2}A\e^{-\eta}+2d\e^{\lambda}S_{1}$
and hence $S_{1}\leq\frac{3d^{2}A}{1-2d\e^{\lambda}}\frac{1}{\e^{\eta}}$.
A simple extreme value analysis shows that the right-hand side of
the last inequality attains its minimum for $\e^{\lambda}=\frac{\eta}{\lambda+\eta}\frac{1}{2d}$,
which gives
\[
\sup\limits _{|h|_{a}\leq1}|h|_{a}^{-\lambda}\int\limits _{\R^{d}}|g(x+h)-g(x)|dx=S_{1}\leq3d^{2}A(2d)^{\eta/\lambda}\left(1+\frac{\lambda}{\eta}\right)^{1+\frac{\eta}{\lambda}}.
\]
\textit{Step 3.} For the general case let $\mu_{n}=\mu\ast G_{n}$,
where $G_{n}(x)=n^{d}G(nx)$ and $0\leq G\in C_{c}^{\infty}(\R^{d})$
satisfies $\|G\|_{L^{1}(\R^{d})}=1$ with support in $\{x\in\R^{d}\ |\ |x|\leq1\}$.
Then $\mu_{n}$ has a density $g_{n}\in C^{1}(\R^{d})$ with $\nabla g_{n}\in L^{1}(\R^{d})$.
Moreover, for any $\phi\in C_{b}^{\eta,a}(\R^{d})$ and $h\in[-1,1]$,
\begin{align*}
\left|\int\limits _{\R^{d}}(\phi(x+he_{j})-\phi(x))\mu_{n}(dx)\right| & =\left|\int\limits _{\R^{d}}[(\phi\ast G_{n})(x+he_{j})-(\phi\ast G_{n})(x)]\mu(dx)\right|\\
 & \leq A\|\phi\ast G_{n}\|_{C_{b}^{\eta,a}}|h|^{(\lambda+\eta)/a_{j}}\leq A\|\phi\|_{C_{b}^{\eta,a}}|h|^{(\lambda+\eta)/a_{j}}.
\end{align*}
Applying step 2 for $g_{n}$ gives
\begin{align}
\|g_{n}\|_{B_{1,\infty}^{\lambda,a}} & \leq\mu(\R^{d})+\frac{1}{d}\sup\limits _{|h|_{a}\leq1}|h|_{a}^{-\lambda}\int\limits _{\R^{d}}|g_{n}(x+h)-g_{n}(x)|dx\label{MAIN:06}\\
 & \leq\mu(\R^{d})+3dA(2d)^{\eta/\lambda}\left(1+\frac{\lambda}{\eta}\right)^{1+\frac{\eta}{\lambda}}.\nonumber
\end{align}
The particular choice of $G_{n}$ implies that $g_{n}$ is also tight,
i.e.
\[
\sup\limits _{n\in\N}\int\limits _{|x|\geq R}g_{n}(x)dx\longrightarrow0,\ \ R\to\infty.
\]
Hence we may apply the Kolmogorov-Riesz compactness criterion for
$L^{1}(\R^{d})$ to conclude that $(g_{n})_{n\in\N}$ is relatively
compact in $L^{1}(\R^{d})$. Let $g\in L^{1}(\R^{d})$ be the limit
of a subsequence of $(g_{n})_{n\in\N}$. Since also $g_{n}(x)dx\longrightarrow\mu(dx)$
weakly as $n\to\infty$, we conclude that $\mu(dx)=g(x)dx$. The estimate
\eqref{MAIN:05} is now a consequence of \eqref{MAIN:06} and the
Lemma of Fatou. \end{proof} Note that the restriction $(\lambda+\eta)/a_{k}\in(0,1)$,
$k=1,\dots,d$, in the above lemma is not essential
since we may always replace $\lambda,\eta>0$ by some smaller values
which satisfy this condition and \eqref{EQ:32}.

\section{Proof of Theorem \ref{THEOREM:01}}

\subsection{Approximation when $\gamma\in[1,2]$}

Suppose that $\gamma\in[1,2]$, and define
\begin{align}
\kappa=\kappa(\gamma,\delta,\beta,\chi)=\min\left\{ 1+\frac{\beta\wedge\delta}{\gamma},\frac{1}{\gamma}+\frac{\chi\wedge(\delta/\gamma)}{\gamma}\right\} .\label{KAPPA:00}
\end{align}
Let $X$ be given by \eqref{EQ:12} and define, for $t>0$ and $\e\in(0,1\wedge t)$,
\begin{align*}
X^{\e}(t)=U^{\e}(t)+\sigma(X(t-\e))(Z(t)-Z(t-\e)),\qquad U^{\e}(t)=X(t-\e)+b(X(t-\e))\e.
\end{align*}
Below we provide an explicit convergence rate for $X^{\e}\to X$,
when $\e\to0$. A similar result was obtained in the isotropic case
\citep[Lemma 3.1]{DF13}. \begin{Proposition}\label{PROP:00} Let
$Z$ be a Lévy process with Lévy measure $\nu$ and symbol \eqref{SYMBOL}.
Suppose that \eqref{MOMENT:00} holds with $\gamma\in[1,2]$ and condition
(A2) is satisfied. Let $X$ be given as in \eqref{EQ:12}. Then
\begin{enumerate}
\item[(a)] For each $\eta\in(0,\delta]$ there exists a constant $C=C(\eta)>0$
such that, for all $0\leq s\leq t\leq s+1$, it holds that
\[
\E[|X(t)-X(s)|^{\eta}]\leq C(t-s)^{\eta/\gamma}.
\]
\item[(b)] For each $\eta\in(0,1\wedge\delta]$ there exists a constant $C=C(\eta)>0$
such that
\[
\E[|X(t)-X^{\e}(t)|^{\eta}]\leq C\e^{\eta\kappa},\ \ t>0,\ \ \e\in(0,1\wedge t).
\]
\end{enumerate}
\end{Proposition}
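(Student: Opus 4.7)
The plan is to decompose both $X(t)-X(s)$ and $X(t)-X^{\e}(t)$ into a drift contribution and a stochastic-integral contribution, control each separately using the moment estimates for Lévy-driven integrals from Lemma \ref{LEMMA:00} in the appendix together with the boundedness and Hölder regularity of $b,\sigma$, and then recombine via $|a+b|^{\eta}\le C_{\eta}(|a|^{\eta}+|b|^{\eta})$. This is the direct anisotropic analogue of \cite[Lemma 3.1]{DF13}; the only new twist is that the time scale coming from the noise is $(t-s)^{1/\gamma}$, reflecting the integrability exponent $\gamma$ of the small jumps in \eqref{MOMENT:00}.

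For part (a), I would write $X(t)-X(s)=\int_{s}^{t}b(X(u))\,du+\int_{s}^{t}\sigma(X(u-))\,dZ(u)$. The drift is bounded in absolute value by $\|b\|_{\infty}(t-s)$, and because $t-s\le 1$ and $\gamma\ge 1$ we have $(t-s)^{\eta}\le (t-s)^{\eta/\gamma}$. For the stochastic integral, Lemma \ref{LEMMA:00} together with $\|\sigma\|_{\infty}<\infty$ and condition \eqref{MOMENT:00} yields $\E\bigl[\bigl|\int_{s}^{t}\sigma(X(u-))\,dZ(u)\bigr|^{\eta}\bigr]\le C(t-s)^{\eta/\gamma}$ for every $\eta\in(0,\delta]$; summing the two bounds proves (a).

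For part (b), decompose $X(t)-X^{\e}(t)=J_{1}+J_{2}$ with $J_{1}=\int_{t-\e}^{t}\bigl[b(X(u))-b(X(t-\e))\bigr]\,du$ and $J_{2}=\int_{t-\e}^{t}\bigl[\sigma(X(u-))-\sigma(X(t-\e))\bigr]\,dZ(u)$. The key reduction is that boundedness plus Hölder continuity of $b$ gives $|b(x)-b(y)|\le C(1\wedge|x-y|^{\beta})\le C|x-y|^{\beta\wedge\delta}$, and analogously $|\sigma(x)-\sigma(y)|\le C|x-y|^{\chi\wedge(\delta/\gamma)}$; the purpose of this interpolation is precisely to reduce the exponents to values $\le\delta$ so that part (a) becomes applicable to the resulting increments of $X$. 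For $J_{1}$, Fubini combined with (a) gives $\E[|J_{1}|]\le C\e^{1+(\beta\wedge\delta)/\gamma}$, and Jensen's inequality in the \emph{concave} direction (valid since $\eta\le 1$) upgrades this to $\E[|J_{1}|^{\eta}]\le (\E|J_{1}|)^{\eta}\le C\e^{\eta\kappa}$ because $1+(\beta\wedge\delta)/\gamma\ge\kappa$ by definition of $\kappa$. For $J_{2}$, Lemma \ref{LEMMA:00} combined with part (a) bounds $\E[|J_{2}|^{\eta}]$ by $C\e^{\eta/\gamma}\cdot\sup_{u\in[t-\e,t]}\E[|X(u-)-X(t-\e)|^{(\chi\wedge(\delta/\gamma))\eta}]\le C\e^{\eta/\gamma+(\chi\wedge(\delta/\gamma))\eta/\gamma}\le C\e^{\eta\kappa}$, again using the definition of $\kappa$.

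The main obstacle I anticipate is the exponent-matching with the moment parameter $\delta$: naively applying part (a) with the raw Hölder exponents $\beta,\chi$ fails whenever they exceed $\delta$ or $\delta/\gamma$, and without the interpolation trick $1\wedge x^{\beta}\le x^{\beta\wedge\delta}$ one cannot keep the inner moments under control. A secondary subtlety is that for $\eta\le 1$ one cannot pull $|\cdot|^{\eta}$ inside the drift integral via the usual convex Jensen inequality, so the bound on $J_{1}$ must be produced by first estimating $\E[|J_{1}|]$ and raising to the $\eta$-th power only at the end. Everything else is arithmetic on exponents and an appeal to the appendix Lemma \ref{LEMMA:00}.
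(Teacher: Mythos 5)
Your proof is correct and follows essentially the same route as the paper's: decompose into drift and stochastic-integral contributions, cap the H\"older exponents at $\beta\wedge\delta$ and $\chi\wedge(\delta/\gamma)$ so that part (a) applies, use Lemma~\ref{LEMMA:00}.(a) for the noise, and use the concave Jensen inequality (justified by $\eta\leq 1$) for the drift term. The only minor imprecision is the intermediate bound for $J_2$: Lemma~\ref{LEMMA:00}.(a) yields $C\e^{\eta/\gamma}\sup_u\E\bigl[|\sigma(X(u))-\sigma(X(t-\e))|^{\gamma}\bigr]^{\eta/\gamma}$ (the $\gamma$-th moment raised to the power $\eta/\gamma$), not an $\eta$-th moment with no outer power, though both produce the same final exponent $\e^{\eta/\gamma+\eta(\chi\wedge(\delta/\gamma))/\gamma}$ after applying part (a).
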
 \begin{proof} \textit{(a)} Write
\[
\E[|X(t)-X(s)|^{\eta}]\leq C\E\left[\left|\int\limits _{s}^{t}b(X(u))du\right|^{\eta}\right]+C\E\left[\left|\int\limits _{s}^{t}\sigma(X(u))dZ(u)\right|^{\eta}\right]=:R_{1}+R_{2}.
\]
Using the boundedness of $b$ and $\eta\leq\gamma$ yields for the
first term $R_{1}\leq C(t-s)^{\eta}\leq C(t-s)^{\eta/\gamma}$. For
the second term we apply Lemma \ref{LEMMA:00}.(a) and the boundedness
of $\sigma$ to obtain
\begin{align*}
R_{2}\leq C(t-s)^{\eta/\gamma}\sup\limits _{u\in[s,t]}\E[|\sigma(X(u))|^{\gamma}]^{\eta/\gamma}\leq C(t-s)^{\eta/\gamma}.
\end{align*}
This proves the assertion.

\textit{(b)} Write $\E[|X(t)-X^{\e}(t)|^{\eta}]\leq R_{1}+R_{2}$,
where
\begin{align*}
R_{1} & =\E\left[\left|\int\limits _{t-\e}^{t}(b(X(u))-b(X(t-\e)))du\right|^{\eta}\right],\\
R_{2} & =\E\left[\left|\int\limits _{t-\e}^{t}(\sigma(X(t-))-\sigma(X(t-\e)))dZ(u)\right|^{\eta}\right].
\end{align*}
Let us first estimate $R_{1}$. From (A2) we obtain
$|b(x)-b(y)|\leq C|x-y|^{\beta\wedge\delta}$. Applying first Jensen
inequality and then \textit{(a)} gives
\begin{align*}
R_{1} & \leq\E\left[\int\limits _{t-\e}^{t}|b(X(u))-b(X(t-\e))|du\right]^{\eta}\\
 & \leq C\e^{\eta}\sup\limits _{u\in[t-\e,t]}\E\left[|X(u)-X(t-\e)|^{\beta\wedge\delta}\right]^{\eta}\leq C\e^{\eta+\frac{\eta(\beta\wedge\delta)}{\gamma}}.
\end{align*}
For the second term we use Lemma \ref{LEMMA:00}.(a), $|\sigma(x)-\sigma(y)|\leq C|x-y|^{\chi\wedge(\delta/\gamma)}$
(see (A2)) and then part \textit{(a)} to obtain
\begin{align*}
R_{2} & \leq C\e^{\eta/\gamma}\sup\limits _{u\in[t-\e,t]}\E\left[|\sigma(X(u))-\sigma(X(t-\e))|^{\gamma}\right]^{\eta/\gamma}\\
 & \leq C\e^{\eta/\gamma}\sup\limits _{u\in[t-\e,t]}\E[|X(u)-X(t-\e)|^{\gamma(\chi\wedge(\delta/\gamma))}]^{\eta/\gamma}\leq C\e^{\frac{\eta}{\gamma}+\frac{\eta}{\gamma}\chi\wedge(\delta/\gamma)}.
\end{align*}
This proves the assertion. \end{proof}

\subsection{Approximation when $\gamma\in(0,1)$}

In this part we study the case when $\gamma\in(0,1)$. Our aim is
to provide a similar approximation as in the case $\gamma\in[1,2]$.
In order to obtain an optimal convergence rate $\kappa$, we use the
same ideas as in \citep[Lemma 3.2]{DF13}. Define, for $\gamma\in(0,1)$,
\begin{align}
\kappa=\kappa(\gamma,\beta,\chi)=\min\left\{ 1+\frac{\beta\wedge\chi}{\gamma},\frac{1}{\gamma}+\chi,\frac{1}{1-\beta\wedge\chi}\right\} .\label{KAPPA:01}
\end{align}
Below we provide a similar result to Proposition \ref{PROP:00}. \begin{Proposition}\label{PROP:04}
Let $Z$ be a Lévy process with Lévy measure $\nu$ and symbol \eqref{SYMBOL}.
Suppose that \eqref{MOMENT:00} holds for $\gamma\in(0,1)$ and condition
(A2) is satisfied. Let $X$ be as in \eqref{EQ:12}. Then
\begin{enumerate}
\item[(a)] For each $\eta\in(0,\delta]$ there exists a constant $C>0$ such
that, for all $0\leq s\leq t\leq s+1$, it holds that
\[
\E[|X(t)-X(s)|^{\eta}]\leq C(t-s)^{\eta}.
\]
Moreover, if $\eta\in(0,1)$, then
\[
\E[|X(t)-X(s)|^{\eta}\wedge1]\leq C(t-s)^{\eta}.
\]
\item[(b)] For each $t>0$ and $\e\in(0,1\wedge t)$ there exists an $\F_{t-\e}$-measurable random variable $U^{\e}(t)$ such that,
setting
\[
X^{\e}(t)=U^{\e}(t)+\sigma(X(t-\e))(Z(t)-Z(t-\e)),
\]
for any $\eta\in(0,\delta]$ there exists a constant $C>0$ with
\[
\E\left[|X(t)-X^{\e}(t)|^{\eta}\right]\leq C\e^{\eta\kappa},\ \ t>0,\ \ \e\in(0,1\wedge t).
\]
\end{enumerate}
\end{Proposition}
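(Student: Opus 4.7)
The plan is to mirror Proposition~\ref{PROP:00} while accommodating the finite-variation regime $\gamma\in(0,1)$, where the drift of $Z$ dominates at small time scales; I adapt the strategy of \citep[Lemma 3.2]{DF13}. The preparatory step is a rewriting of the SDE. Since $\gamma<1$ together with $\int_{|z|\le 1}|z|^{\gamma}\,\nu(dz)<\infty$ and $|z|\le|z|^{\gamma}$ on $\{|z|\le1\}$ imply $\int_{|z|\le 1}|z|\,\nu(dz)<\infty$, one may write $Z(t)=Y(t)-t\int_{|z|\le1}z\,\nu(dz)$, where $Y$ is a pure-jump Lévy process of finite variation with symbol $\Psi_{Y}(\xi)=\int(1-e^{i\xi\cdot z})\,\nu(dz)$. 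Substituting this into \eqref{EQ:12} yields the equivalent SDE $dX(u)=\tilde b(X(u))\,du+\sigma(X(u-))\,dY(u)$ with $\tilde b(x):=b(x)-\sigma(x)\int_{|z|\le 1}z\,\nu(dz)$ bounded and $(\beta\wedge\chi)$-Hölder continuous.

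For part~(a) I split $X(t)-X(s)$ into drift and jump contributions via the reformulated SDE. The drift part has $\eta$-th moment at most $C(t-s)^{\eta}$ by boundedness of $\tilde b$. The jump part is pointwise bounded by $\|\sigma\|_{\infty}\sum_{s<u\le t}|\Delta Y(u)|$, whose $\eta$-th moment is at most $C(t-s)^{\eta/\gamma}\le C(t-s)^{\eta}$ by Lemma~\ref{LEMMA:00}.(b) (using $t-s\le 1$ and $\gamma<1$, so $\eta/\gamma\ge\eta$). For the truncated variant, I use the identity $x^{\eta}\wedge 1=(x\wedge 1)^{\eta}$ together with $\sum|\Delta Y|\wedge 1\le\sum(|\Delta Y|\wedge 1)$, the compensator formula $\E[\sum_{s<u\le t}(|\Delta Y(u)|\wedge 1)]=(t-s)\int(|z|\wedge 1)\,\nu(dz)$, and Jensen's inequality for the concave map $x\mapsto x^{\eta}$ on $[0,\infty)$.

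For part~(b) I set $U^{\e}(t)=X(t-\e)+b(X(t-\e))\e$, which is $\F_{t-\e}$-measurable. Using the decomposition $b=\tilde b+\sigma\int_{|z|\le 1}z\,\nu(dz)$ one verifies
\[
X(t)-X^{\e}(t)=\int_{t-\e}^{t}[\tilde b(X(u))-\tilde b(X(t-\e))]\,du+\int_{t-\e}^{t}[\sigma(X(u-))-\sigma(X(t-\e))]\,dY(u).
\]
The jump term is handled as in Proposition~\ref{PROP:00} via Lemma~\ref{LEMMA:00}.(b) and the $\chi$-Hölder estimate for $\sigma$ combined with part~(a), producing the contribution $\e^{\eta(1/\gamma+\chi)}$ to $\kappa$. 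For the drift term I use $(\beta\wedge\chi)$-Hölder continuity of $\tilde b$ and split $X(u)-X(t-\e)$ into its drift component (of size $O(\e)$) and its jump component $O(\sum_{t-\e<v\le u}|\Delta Y(v)|)$; combining the resulting bounds with Lemma~\ref{LEMMA:00}.(b) gives the contribution $\e^{\eta(1+(\beta\wedge\chi)/\gamma)}$.

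The main obstacle is the third exponent $1/(1-\beta\wedge\chi)$ in $\kappa$, which does not follow from direct Hölder estimates but from a nonlinear self-improvement. Following \citep[Lemma 3.2]{DF13}, I plan to compare $X(u)$ with the deterministic ODE trajectory $v$ defined by $\dot v=\tilde b(v)$, $v(t-\e)=X(t-\e)$; the deviation $X(u)-v(u-(t-\e))$ satisfies a Gronwall-type integral inequality with Hölder nonlinearity of exponent $\beta\wedge\chi$, whose solution grows like $(u-(t-\e))^{1/(1-\beta\wedge\chi)}$. Taking $\eta$-th moments, absorbing the stochastic perturbation, and taking the minimum of the three individual bounds yields the stated $\kappa$. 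The delicate part is the joint control of this nonlinear Gronwall inequality at the level of $\eta$-th moments, together with the bookkeeping needed to guarantee that each of the three regimes in $\kappa$ is covered by the appropriate dominant term.
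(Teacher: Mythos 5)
Your part (a) is fine and essentially matches the paper. The critical gap is in part (b): you keep the one-step Euler choice $U^{\e}(t)=X(t-\e)+b(X(t-\e))\e$, which is the right choice when $\gamma\in[1,2]$ (Proposition \ref{PROP:00}) but cannot yield the rate $\e^{\kappa}$ when $\gamma\in(0,1)$. With this $U^{\e}$ your drift error is $\int_{t-\e}^{t}[\tilde b(X(u))-\tilde b(X(t-\e))]\,du$, and after splitting $X(u)-X(t-\e)$ into drift and jump parts, the drift part is $O(\e)$ \emph{almost surely}, so the best you get from $(\beta\wedge\chi)$-Hölder continuity is $\E[|\text{drift error}|^{\eta}]\le C\e^{\eta(1+\beta\wedge\chi)}$, not $\e^{\eta(1+(\beta\wedge\chi)/\gamma)}$ as you claim. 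Since $\gamma<1$ forces $\kappa>1+\beta\wedge\chi$ (in fact $\tfrac{1}{1-\rho}-(1+\rho)=\rho^2/(1-\rho)>0$ and $1+\rho/\gamma>1+\rho$), the rate $\e^{\eta(1+\beta\wedge\chi)}$ is strictly worse than the required $\e^{\eta\kappa}$. The nonlinear Gronwall you invoke bounds $\sup_{u}|X(u)-v(u-(t-\e))|$, not $\sup_{u}|X(u)-X(t-\e)|$; once you decompose $\tilde b(X(u))-\tilde b(X(t-\e))=[\tilde b(X(u))-\tilde b(v)]+[\tilde b(v)-\tilde b(X(t-\e))]$, the second piece is a deterministic one-step Euler error equal to $v(\e)-v(0)-\tilde b(v(0))\e$, whose size is again $O(\e^{1+\beta\wedge\chi})$ and cannot be absorbed.

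The fix, which the paper (following \citep[Lemma 3.2]{DF13}) implements, is to let $U^{\e}$ itself encode an accurate drift integrator: define $W^{\e}$ as the time-discretized Euler solution of $\dot W = \tilde b(W(s_{\tau}))$ with grid step $\tau=\e^{1/(1-\beta\wedge\chi)}$ and put $U^{\e}(t)=W^{\e}(t)+\e\,\sigma(X(t-\e))\int_{|z|\le1}z\,\nu(dz)$; then the drift comparison becomes $\int_{t-\e}^{t}[\tilde b(X(s))-\tilde b(W^{\e}(s_{\tau}))]\,ds$, which splits into a Gronwall-controllable piece and a discretization piece of size $C\e\tau^{\beta\wedge\chi}=C\e^{1/(1-\beta\wedge\chi)}$, precisely the third exponent in $\kappa$. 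Note also that using the exact ODE $\dot v=\tilde b(v)$ is problematic: with $\tilde b$ only $(\beta\wedge\chi)$-Hölder continuous, uniqueness fails (Peano without Picard), so the map $X(t-\e)\mapsto v(\e)$ is not canonically defined and the $\F_{t-\e}$-measurability of $U^{\e}(t)$ is not automatic. The explicitly constructed piecewise-linear $W^{\e}$ is a continuous (hence measurable) function of $X(t-\e)$, which avoids this problem.
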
 \begin{proof} \textit{(a)} Suppose that $\eta\in(0,\delta]$.
Then $\E[|X(t)-X(s)|^{\eta}\wedge1]\leq\E[|X(t)-X(s)|^{\eta}]$ and
\[
\E[|X(t)-X(s)|^{\eta}]\leq C\E\left[\left|\int\limits _{s}^{t}b(X(u))du\right|^{\eta}\right]+C\E\left[\left|\int\limits _{s}^{t}\sigma(X(u-))dZ(u)\right|^{\eta}\right]=:R_{1}+R_{2}.
\]
Using the Jensen inequality and the boundedness of $b$, we obtain
$R_{1}\leq C(t-s)^{\eta}$. For the second term we let $Y(t):=Z(t)+t\int_{|z|\leq1}z\nu(dz)$
and obtain from Lemma \ref{LEMMA:00}.(b)
\begin{align*}
R_{2} & \leq C\E\left[\left|\int\limits _{s}^{t}\sigma(X(s-))dY(s)\right|^{\eta}\right]+C\E\left[\left|\int\limits _{s}^{t}\sigma(X(s))ds\right|^{\eta}\right]\\
 & \leq C(t-s)^{\eta/\gamma}\sup\limits _{u\in[s,t]}\E[|\sigma(X(u))|^{\gamma}]^{\eta/\gamma}+C(t-s)^{\eta}\\
 & \leq C(t-s)^{\eta},
\end{align*}
where in the last inequality we have used that $\gamma\in(0,1)$.
Suppose that $\eta\in(\delta,1)$ and let $\widetilde{b}(x)=b(x)-\sigma(x)\int_{|z|\leq1}z\nu(dz)$.
Using the boundedness of $\widetilde{b}$, we obtain
\begin{align*}
|X(t)-X(s)|^{\eta}\wedge1 & \leq\left|\int\limits _{s}^{t}\widetilde{b}(X(u))du\right|^{\eta}\wedge1+\left|\int\limits _{s}^{t}\sigma(X(u-))dY(u)\right|^{\eta}\\
 & \leq C(t-s)^{\eta}+\left|\int\limits _{s}^{t}\sigma(X(u-))dY(u)\right|^{\eta}.
\end{align*}
Finally, using Lemma \ref{LEMMA:00}.(b), we conclude that
\[
\E\left[\left|\int\limits _{s}^{t}\sigma(X(u-))dY(u)\right|^{\eta}\right]\leq C(t-s)^{\eta/\gamma}\leq C(t-s)^{\eta},
\]
which proves the assertion.

\textit{(b)} Set $\tau=\e^{1/(1-\beta\wedge\chi)}$ and define, for
$s\in[t-\e,t]$, $s_{\tau}=t-\e+\tau\lfloor(s-(t-\e))/\tau\rfloor$.
Let $W^{\e}$ be the solution to
\[
W^{\e}(u)=X(t-\e)+\int\limits _{t-\e}^{u}\widetilde{b}(W^{\e}(s_{\tau}))ds,\ \ u\in[t-\e,t].
\]
Arguing as in \citep[Lemma 3.2]{DF13}, we see that $W^{\e}(t)$ is
well-defined and $\F_{t-\e}$-measurable (because it is a deterministic
continuous function of $X(t-\e)$). Define
\begin{align}
X^{\e}(t) & =W^{\e}(t)+\sigma(X(t-\e))(Y(t)-Y(t-\e))=U^{\e}(t)+\sigma(X(t-\e))(Z(t)-Z(t-\e)),\label{APPROXIMATION:02}\\
U^{\e}(t) & =W^{\e}(t)+\e\sigma(X(t-\e))\int_{|z|\leq1}z\nu(dz).\nonumber
\end{align}
It remains to prove the desired estimate. Thus write
\begin{align*}
X^{\e}(t)=X(t-\e)+\int\limits _{t-\e}^{t}\widetilde{b}(W^{\e}(s))ds+\int\limits _{t-\e}^{t}\sigma(X(t-\e))dY(s)+\int\limits _{t-\e}^{t}\left(\widetilde{b}(W^{\e}(s_{\tau}))-\widetilde{b}(W^{\e}(s))\right)ds,
\end{align*}
so that $\E[|X(t)-X^{\e}(t)|^{\eta}\leq\E[I_{\e}^{\eta}]+\E[J_{\e}^{\eta}]+\E[J_{\e}^{\eta}]$
with
\begin{align*}
I_{\e} & =\int\limits _{t-\e}^{t}|\widetilde{b}(X(s))-\widetilde{b}(W^{\e}(s))|ds,\\
J_{\e} & =\left|\int\limits _{t-\e}^{t}(\sigma(X(s-))-\sigma(X(t-\e)))dY(s)\right|,\\
K_{\e} & =\int\limits _{t-\e}^{t}|\widetilde{b}(W^{\e}(s_{\tau}))-\widetilde{b}(W^{\e}(s))|ds.
\end{align*}
For the second term we apply Lemma \ref{LEMMA:00}.(b), the Hölder
continuity of $\sigma$ and part \textit{(a)}, so that
\begin{align*}
\E[J_{\e}^{\eta}] & \leq C\e^{\eta/\gamma}\sup\limits _{s\in[t-\e,t]}\E[|\sigma(X(s))-\sigma(X(t-\e))|^{\gamma}]^{\eta/\gamma}\\
 & \leq C\e^{\eta/\gamma}\sup\limits _{s\in[t-\e,t]}\E[|X(s)-X(t-\e)|^{\gamma\chi}\wedge1]^{\eta/\gamma}\leq C\e^{\eta\left(\frac{1}{\gamma}+\chi\right)}.
\end{align*}
For the last term we use the $(\beta\wedge\chi)$-Hölder continuity
of $\widetilde{b}$ to obtain
\begin{align*}
\E[K_{\e}^{\eta}]\leq C\E\left[\int\limits _{t-\e}^{t}|W^{\e}(s_{\tau})-W^{\e}(s)|^{\beta\wedge\chi}ds\right]^{\eta}\leq C\e\tau^{\eta(\beta\wedge\chi)}=C\e^{\frac{\eta}{1-\beta\wedge\chi}},
\end{align*}
where we have used $|W^{\e}(s)-W^{\e}(s_{\tau})|\leq C\tau$ (since
$\widetilde{b}$ is bounded). The assertion is proved, if we can show
that
\[
\E[I_{\e}^{\eta}]\leq C\left(\e^{\frac{\eta}{1-\beta\wedge\chi}}+\e^{\eta+\eta\frac{\beta\wedge\chi}{\gamma}}\right).
\]
In order to prove this estimate we proceed as follows. Write
\[
W^{\e}(u)=X(t-\e)+\int\limits _{t-\e}^{u}\widetilde{b}(W^{\e}(s))ds+\int\limits _{t-\e}^{u}\left(\widetilde{b}(W^{\e}(s_{\tau}))-\widetilde{b}(W^{\e}(s))\right)ds,
\]
let $R^{\e}(t)=\sum_{s\in[t-\e,t]}|\Delta Y(s)|$, with $\Delta Y(s)=\lim_{r\nearrow s}(Y(s)-Y(r))$,
so that, for $u\in[t-\e,t]$,
\begin{align*}
|X(u)-W^{\e}(u)|\leq CR^{\e}(t)+\int\limits _{t-\e}^{u}|\widetilde{b}(X(s))-\widetilde{b}(W^{\e}(s))|ds+\int\limits _{t-\e}^{u}|\widetilde{b}(W^{\e}(s_{\tau}))-\widetilde{b}(W^{\e}(s))|ds.
\end{align*}
Setting $S^{\e}(t)=\sup_{s\in[t-\e,t]}|X(s)-W^{\e}(s)|$, using the
$(\beta\wedge\chi)$-Hölder continuity of $\widetilde{b}$ we obtain
\begin{align*}
S^{\e}(t) & \leq C\left(R^{\e}(t)+\e S^{\e}(t)^{\beta\wedge\chi}+\e\tau^{\beta\wedge\chi}\right)\\
 & =C\left(R^{\e}(t)+\e S^{\e}(t)^{\beta\wedge\chi}+\e^{\frac{1}{1-\beta\wedge\chi}}\right)\\
 & \leq CR^{\e}(t)+C\e^{1/(1-\beta\wedge\chi)}+(\beta\wedge\chi)S^{\e}(t),
\end{align*}
where we have used the Young inequality $xy\leq(1-\beta\wedge\chi)x^{1/(1-\beta\wedge\chi)}+\beta\wedge\chi y^{1/\beta\wedge\chi}$
with $x=C\e$ and $y=S^{\e}(t)^{\beta\wedge\chi}$. Since $\beta\wedge\chi<1$
we obtain
\begin{align}
S^{\e}(t)\leq CR^{\e}(t)+C\e^{\frac{1}{1-\beta\wedge\chi}}.\label{APPROXIMATION:00}
\end{align}
Estimating $I_{\e}$ gives, by \eqref{APPROXIMATION:00} and the $\beta\wedge\chi$-Hölder
continuity of $\widetilde{b}$,
\begin{align*}
I_{\e}\leq C\int_{t-\e}^{t}|X(s)-W^{\e}(s)|^{\beta\wedge\chi}ds\leq C\e S^{\e}(t)^{\beta\wedge\chi}\leq C\e\left(R^{\e}(t)^{\beta\wedge\chi}+\e^{\frac{\beta\wedge\chi}{1-\beta\wedge\chi}}\right).
\end{align*}
Using now Lemma \ref{LEMMA:00}.(b) we obtain
\[
\E[I_{\e}^{\eta}]\leq C\e^{\eta}\left(\e^{\eta\frac{\beta\wedge\chi}{1-\beta\wedge\chi}}+\E[R^{\e}(t)^{\eta(\beta\wedge\chi)}]\right)\leq C\left(\e^{\frac{\eta}{1-\beta\wedge\chi}}+\e^{\eta+\eta\frac{\beta\wedge\chi}{\gamma}}\right).
\]
This completes the proof. \end{proof}

\subsection{Main technical estimate}

Recall that $\kappa$ is defined by \eqref{KAPPA:00} or \eqref{KAPPA:01},
respectively. Based on the previous approximation we may show the
following. \begin{Proposition}\label{PROP:01} Let $Z$ be a Lévy
process with Lévy measure $\nu$ and symbol \eqref{SYMBOL}. Suppose
that \eqref{MOMENT:00}, (A1) and (A2) are satisfied. Let $X$ be
as in \eqref{EQ:12}, take an anisotropy $a=(a_{i})_{i\in\{1,\dots,d\}}$
and $\eta\in(0,1)$ with
\begin{align}
\frac{\eta}{a_{j}}\leq1\wedge\delta,\ \ j\in\{1,\dots,d\}.\label{EQ:00}
\end{align}
Then there exists a constant $C=C(\eta)>0$ and $\e_{0}\in(0,1\wedge t)$
such that, for any $\e\in(0,\e_{0})$, $h\in[-1,1]$, $\phi\in C_{b}^{\eta,a}(\R^{d})$
and $i\in\{1,\dots,d\}$,
\[
\left|\E\left[|\sigma^{-1}(X(t))|^{-1}\Delta_{he_{i}}\phi(X(t))\right]\right|\leq C\|\phi\|_{C_{b}^{\eta,a}}\left(|h|^{\eta/a_{i}}\e^{\frac{\chi\wedge\delta}{\max\{1,\gamma\}}}+|h|\e^{-1/\alpha_{i}}+\max\limits _{j\in\{1,\dots,d\}}\e^{\eta\kappa/a_{j}}\right).
\]
\end{Proposition}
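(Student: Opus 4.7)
The plan is to replace $X(t)$ and the prefactor $|\sigma^{-1}(X(t))|^{-1}$ by their $\mathcal{F}_{t-\e}$-measurable approximations $X^\e(t)$ and $|\sigma^{-1}(X(t-\e))|^{-1}$, so that after conditioning on $\mathcal{F}_{t-\e}$ the remaining random contribution is $\sigma(X(t-\e))(Z(t)-Z(t-\e))$, whose law has a density directly controlled by (A1). Accordingly I split
\[
\E\bigl[|\sigma^{-1}(X(t))|^{-1}\Delta_{he_i}\phi(X(t))\bigr] = \mathrm{I}+\mathrm{II}+\mathrm{III},
\]
where $\mathrm{I}$ replaces the prefactor by $|\sigma^{-1}(X(t-\e))|^{-1}$, $\mathrm{II}$ further replaces $X(t)$ by $X^\e(t)$ inside $\Delta_{he_i}\phi$, and $\mathrm{III}$ is the remaining clean expectation. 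For $\mathrm{I}$ I combine the matrix inequality \eqref{MATRIX:INEQ} with the Hölder continuity and boundedness of $\sigma$ (which together yield $|\sigma(x)-\sigma(y)|\le C|x-y|^{\chi\wedge\delta}$), the bound $|\Delta_{he_i}\phi|\le\|\phi\|_{C_b^{\eta,a}}|h|^{\eta/a_i}$, and Proposition~\ref{PROP:00}(a) or Proposition~\ref{PROP:04}(a); this produces the first term in the claimed bound. For $\mathrm{II}$ I telescope anisotropically to get $|\phi(x)-\phi(y)|\le C\|\phi\|_{C_b^{\eta,a}}\sum_{k=1}^d|x_k-y_k|^{\eta/a_k}$, apply it to both $\phi(X(t))-\phi(X^\e(t))$ and $\phi(X(t)+he_i)-\phi(X^\e(t)+he_i)$, and invoke Proposition~\ref{PROP:00}(b) or Proposition~\ref{PROP:04}(b) coordinate-wise; here the restriction \eqref{EQ:00} keeps each $\eta/a_k$ inside the admissible range $(0,1\wedge\delta]$, producing the $\max_j\e^{\eta\kappa/a_j}$ contribution. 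Boundedness of $|\sigma^{-1}(\cdot)|^{-1}\le|\sigma(\cdot)|$ absorbs the prefactor.

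For the main contribution $\mathrm{III}$, I condition on $\mathcal{F}_{t-\e}$: writing $A=\sigma(X(t-\e))$ and letting $Y$ be an independent copy of $Z(\e)$ with density $f_\e$,
\[
\E\bigl[\Delta_{he_i}\phi(X^\e(t))\mid \mathcal{F}_{t-\e}\bigr] = \int \bigl[\phi(U^\e(t)+Az+he_i)-\phi(U^\e(t)+Az)\bigr]f_\e(z)\,dz.
\]
The substitution $z\mapsto z+A^{-1}he_i$ in the first summand transfers the shift $he_i$ in $X$-space into a shift $A^{-1}he_i$ inside $f_\e$, producing
\[
\int \phi(U^\e(t)+Az)\bigl[f_\e(z-A^{-1}he_i)-f_\e(z)\bigr]\,dz.
\]
Writing $A^{-1}he_i$ as a sum along the coordinate basis and telescoping, assumption (A1) furnishes an $L^1$-bound on each coordinate increment of $f_\e$ of the form $C|\cdot|\e^{-1/\alpha_k}$, valid for all $\e$ below a threshold $\e_0$ coming from the $\limsup$ in (A1). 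Using $\|\phi\|_\infty\le\|\phi\|_{C_b^{\eta,a}}$ and the crucial cancellation $|\sigma^{-1}(X(t-\e))|^{-1}\cdot|A^{-1}he_i|\le|A^{-1}|^{-1}|A^{-1}||h|=|h|$, one arrives at a contribution of the form $C\|\phi\|_{C_b^{\eta,a}}|h|\,\e^{-1/\alpha_i}$, giving the second term in the proposition.

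The main obstacle is precisely the direction-sensitive cancellation step in $\mathrm{III}$: the telescoping has to be organised so that $|A^{-1}|^{-1}$ absorbs the operator-norm size of the transferred shift $A^{-1}he_i$ while still exposing the correct direction-dependent rate $\e^{-1/\alpha_i}$ (rather than only the coarser worst-case rate $\e^{-1/\alpha^{\mathrm{min}}}$ given by a fully naive bound). The more routine, but still delicate, difficulties are the bookkeeping in $\mathrm{I}$ and $\mathrm{II}$: each approximation error has to be matched with the right admissible moment exponent from Propositions~\ref{PROP:00}/\ref{PROP:04}, and the outcome must fit together with the anisotropic Hölder norm of $\phi$; this is precisely what the constants $\kappa$ defined in \eqref{KAPPA:00}/\eqref{KAPPA:01} are tuned to achieve.
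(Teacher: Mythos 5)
Your decomposition $\mathrm{I}+\mathrm{II}+\mathrm{III}$ is exactly the paper's $R_{1}+R_{2}+R_{3}$, and each piece is handled with the same tools: the matrix inequality \eqref{MATRIX:INEQ} combined with Proposition~\ref{PROP:00}(a)/\ref{PROP:04}(a) for $\mathrm{I}$; the coordinate-wise anisotropic Hölder bound for $\phi$, boundedness of $|\sigma^{-1}(\cdot)|^{-1}$, and Proposition~\ref{PROP:00}(b)/\ref{PROP:04}(b) under \eqref{EQ:00} for $\mathrm{II}$; conditioning on $\F_{t-\e}$, a change of variables transferring $\Delta_{he_{i}}$ into a shift of $f_{\e}$, and assumption (A1) for $\mathrm{III}$. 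So the approach is the same as the paper's.

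The step you single out as the main obstacle in $\mathrm{III}$ is indeed the delicate one, and your unease is well placed; in fact you are more cautious than the paper, which simply asserts the rate there. For a general (non-diagonal) $\sigma$, the shift landing inside $f_{\e}$, namely $-h\,\sigma^{-1}(X(t-\e))e_{i}$, is not in a coordinate direction; telescoping it along the canonical basis and applying (A1) produces $\sum_{k}|(\sigma^{-1}(X(t-\e))e_{i})_{k}|\,|h|\,\e^{-1/\alpha_{k}}$, and the prefactor $|\sigma^{-1}(\cdot)|^{-1}$ only cancels the total magnitude $|\sigma^{-1}(X(t-\e))e_{i}|$, not how the shift is spread across coordinates. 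The cleanest bound this route delivers is therefore $C|h|\e^{-1/\alpha^{\mathrm{min}}}$; the direction-adapted rate $\e^{-1/\alpha_{i}}$ is literal only in the diagonal setting of Theorem~\ref{THEOREM:03}, where $\sigma^{-1}(\cdot)e_{i}$ is a scalar multiple of $e_{i}$. Since Theorem~\ref{THEOREM:01} already states its final estimate with $\alpha^{\mathrm{min}}$, the conclusion is unaffected, but your writeup stops at ``one arrives at $\ldots\e^{-1/\alpha_{i}}$'' without closing that gap; in the general case carry $\alpha^{\mathrm{min}}$ through and choose $c_{i}$ in the concluding step against $\alpha^{\mathrm{min}}$ rather than $\alpha_{i}$.
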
 \begin{proof} For $\e\in(0,1\wedge t)$ let $X^{\e}(t)$
be the approximation from Proposition \ref{PROP:00} or Proposition
\ref{PROP:04}, respectively. Then
\begin{align*}
\left|\E\left[|\sigma^{-1}(X(t))|^{-1}\Delta_{he_{i}}\phi(X(t))\right]\right| & \leq R_{1}+R_{2}+R_{3},\\
R_{1} & =\left|\E\left[\Delta_{he_{i}}\phi(X(t))\left(|\sigma(X(t))|^{-1}-|\sigma^{-1}(X(t-\e))|^{-1}\right)\right]\right|,\\
R_{2} & =\E\left[|\Delta_{he_{i}}\phi(X(t))-\Delta_{he_{i}}\phi(X^{\e}(t))||\sigma^{-1}(X(t-\e))|^{-1}\right],\\
R_{3} & =\left|\E\left[|\sigma^{-1}(X(t-\e))|^{-1}\Delta_{he_{i}}\phi(X^{\e}(t))\right]\right|.
\end{align*}
For the first term we use \eqref{MATRIX:INEQ} to get $||\sigma^{-1}(x)|^{-1}-|\sigma^{-1}(y)|^{-1}|\leq|\sigma(x)-\sigma(y)|\leq C|x-y|^{\chi\wedge\delta}$
and then Proposition \ref{PROP:00}.(a) or Propostion \ref{PROP:04}.(a),
respectively, to obtain
\begin{align*}
R_{1}\leq\|\phi\|_{C_{b}^{\eta,a}}|h|^{\eta/a_{i}}\E[|X(t)-X(t-\e)|^{\chi\wedge\delta}|]\leq C\|\phi\|_{C_{b}^{\eta,a}}|h|^{\eta/a_{i}}\e^{\frac{\chi\wedge\delta}{\max\{1,\gamma\}}}.
\end{align*}
For $R_{2}$ we use again \eqref{MATRIX:INEQ}, i.e. $|\sigma^{-1}(x)|^{-1}\leq|\sigma(x)|\leq C$,
and Proposition \ref{PROP:00} to obtain
\begin{align*}
R_{2} & \leq C\|\phi\|_{C_{b}^{\eta,a}}\max\limits _{j\in\{1,\dots,d\}}\E\left[|\sigma^{-1}(X(t-\e))|^{-1}|X_{j}(t)-X_{j}^{\e}(t)|^{\eta/a_{j}}\right]\\
 & \leq C\|\phi\|_{C_{b}^{\eta,a}}\max\limits _{j\in\{1,\dots,d\}}\E\left[|X_{j}(t)-X_{j}^{\e}(t)|^{\frac{\eta}{a_{j}}}\right]\\
 & \leq C\|\phi\|_{C_{b}^{\eta,a}}\max\limits _{j\in\{1,\dots,d\}}\e^{\eta\kappa/a_{j}},
\end{align*}
where in the last inequality we have used \eqref{EQ:00} so that Proposition
\ref{PROP:00}.(b) or Proposition \ref{PROP:04}.(b) is applicable.
Let us turn to $R_{3}$. Let $f_{t}$ be the density given by (A1)
and write $X^{\e}(t)=U^{\e}(t)+\sigma(X(t-\e))(Z(t)-Z(t-\e))$, where
$U^{\e}(t)$ is either given by Proposition \ref{PROP:00}.(b) or
Proposition \ref{PROP:04}.(b), respectively. Then we obtain
\begin{align*}
R_{3} & =\left|\E\left[\int\limits _{\R^{d}}|\sigma^{-1}(X(t-\e))|^{-1}(\Delta_{he_{i}}\phi)(U^{\e}(t)+\sigma(X(t-\e))z)f_{\e}(z)dz\right]\right|\\
 & =\left|\E\left[\int\limits _{\R^{d}}|\sigma^{-1}(X(t-\e))|^{-1}\phi(U^{\e}(t)+\sigma(X(t-\e))z)(\Delta_{-h\sigma^{-1}(X(t-\e))e_{i}}f_{\e})(z)dz\right]\right|\\
 & \leq\|\phi\|_{\infty}\E\left[|\sigma^{-1}(X(t-\e))|^{-1}\int\limits _{\R^{d}}|(\Delta_{-h\sigma^{-1}(X(t-\e))e_{i}}f_{\e})(z)|dz\right]\\
 & \leq C\|\phi\|_{C_{b}^{\eta,a}}|h|\e^{-1/\alpha_{i}}\E\left[|\sigma^{-1}(X(t-\e))|^{-1}|\sigma^{-1}(X(t-\e))e_{i}|\right]\\
 & \leq C\|\phi\|_{C_{b}^{\eta,a}}|h|\e^{-1/\alpha_{i}},
\end{align*}
where we have used \eqref{EXAMPLE:10} for $\e\in(0,\e_{0})$, $\e_{0}\in(0,1)$
small enough and,
\[
\E\left[|\sigma^{-1}(X(t-\e))|^{-1}|\sigma^{-1}(X(t-\e))e_{i}|\right]\leq1.
\]
Summing up the estimates for $R_{0},R_{1},R_{2},R_{3}$ yields the
assertion. \end{proof}

\subsection{Concluding the proof of Theorem \ref{THEOREM:01}}

Below we provide the proof of Theorem \ref{THEOREM:01}. Fix $t>0$.
It suffices to show that Lemma \ref{LEMMA:02} is applicable to the
finite measure
\[
\mu_{t}(A)=\E[|\sigma^{-1}(X(t))|^{-1}\1_{A}(X(t))],\ \ A\subset\R^{d}\ \ \text{Borel}.
\]
Using \eqref{EQ:01} or \eqref{EQ:02}, respectively, we obtain $\kappa\alpha_{j}>1$
and hence $\kappa/a_{j}>1/\overline{\alpha}$ for all $j\in\{1,\dots,d\}$.
This implies
\[
\frac{a_{j}}{\kappa}\frac{1}{a_{i}}<\frac{\overline{\alpha}}{a_{i}}=\alpha_{i},\ \ i,j\in\{1,\dots,d\}.
\]
Hence we find $\eta\in(0,1)$ and $c_{1},\dots,c_{d}>0$ such that,
for all $i,j\in\{1,\dots,d\}$,
\[
0<\frac{\eta}{a_{i}}<1\wedge\delta,\qquad\frac{a_{j}}{\kappa}\frac{1}{a_{i}}<c_{i}<\alpha_{i}\left(1-\frac{\eta}{a_{i}}\right).
\]
Define
\[
\lambda=\min_{i,j\in\{1,\dots,d\}}\left\{ c_{i}\frac{\chi\wedge\delta}{\max\{1,\gamma\}}a_{i},\ a_{i}-\eta-\frac{a_{i}c_{i}}{\alpha_{i}},\ \eta\left(c_{i}a_{i}\frac{\kappa}{a_{j}}-1\right)\right\} >0.
\]
Let $\phi\in C_{b}^{\eta,a}(\R^{d})$. By Proposition \ref{PROP:01}
we obtain, for $h\in[-1,1]$, $\e=|h|^{c_{i}}(1\wedge t)$ and $i\in\{1,\dots,d\}$,
\begin{align*}
 & \ \left|\E\left[|\sigma^{-1}(X(t))|^{-1}\Delta_{he_{i}}\phi(X(t))\right]\right|\\
 & \leq C\|\phi\|_{C_{b}^{\eta,a}}\left(|h|^{\eta/a_{i}}\e^{\frac{\chi\wedge\delta}{\max\{1,\gamma\}}}+|h|\e^{-1/\alpha_{i}}+\max\limits _{j\in\{1,\dots,d\}}\e^{\eta\kappa/a_{j}}\right)\\
 & \leq\frac{C\|\phi\|_{C_{b}^{\eta,a}}}{(1\wedge t)^{1/\alpha_{i}}}\left(|h|^{\eta/a_{i}+c_{i}\frac{\chi\wedge\delta}{\max\{1,\gamma\}}}+|h|^{1-c_{i}/\alpha_{i}}+\max\limits _{j\in\{1,\dots,d\}}|h|^{c_{i}\eta\kappa/a_{j}}\right)\\
 & =\frac{C\|\phi\|_{C_{b}^{\eta,a}}}{(1\wedge t)^{1/\alpha_{i}}}|h|^{\eta/a_{i}}\left(|h|^{c_{i}\frac{\chi\wedge\delta}{\max\{1,\gamma\}}}+|h|^{1-\eta/a_{i}-c_{i}/\alpha_{i}}+\max\limits _{j\in\{1,\dots,d\}}|h|^{c_{i}\eta\kappa/a_{j}-\eta/a_{i}}\right)\\
 & \leq\frac{C\|\phi\|_{C_{b}^{\eta,a}}}{(1\wedge t)^{1/\alpha_{i}}}|h|^{(\eta+\lambda)/a_{i}}.
\end{align*}
This shows that Lemma \ref{LEMMA:02} is applicable.

\section{Proof of Theorem \ref{THEOREM:02}}

\subsection{Approximation when $\gamma_{i}\in[1,2]$}

Recall that $\gamma=\max\{\gamma_{1},\dots,\gamma_{d}\}$ and $\delta=\min\{\delta_{1},\dots,\delta_{d}\}$.
For $i\in\{1,\dots,d\}$ such that $\gamma_{i}\in[1,2]$, define
\[
\kappa_{i}=\min\left\{ 1+\frac{\beta_{i}\wedge\delta}{\gamma},\frac{1}{\gamma_{i}}+\frac{\chi_{i}\wedge(\delta/\gamma_{i})}{\gamma}\right\} .
\]
Let $X$ be given by \eqref{EQ:12}. For $t>0$ and $\e\in(0,1\wedge t)$
define
\begin{align*}
X_{i}^{\e}(t)=U_{i}^{\e}(t)+\sigma_{i}(X(t-\e))(Z_{i}(t)-Z_{i}(t-\e)),\qquad U_{i}^{\e}(t)=X_{i}(t-\e)+b_{i}(X(t-\e))\e.
\end{align*}
Below we provide a similar convergence rate to Proposition \ref{PROP:00}.
\begin{Proposition}\label{PROP:05} 
Let $Z$ be a Lévy process with
Lévy measure $\nu$ and symbol \eqref{SYMBOL}. Suppose that \eqref{EQ:05}
and (A3) are satisfied. Let $i\in\{1,\dots,d\}$ be such that $\gamma_{i}\in[1,2]$,
and let $X$ be given as in \eqref{EQ:12}. Then
\begin{enumerate}
\item[(a)] For each $\eta\in(0,\delta_{i}]$ there exists a constant $C>0$
such that, for all $0\leq s\leq t\leq s+1$, it holds that
\[
\E[|X_{i}(t)-X_{i}(s)|^{\eta}]\leq C(t-s)^{\eta/\gamma_{i}}.
\]
\item[(b)] For each $\eta\in(0,1\wedge\delta_{i}]$ there exists a constant
$C>0$ such that
\[
\E[|X_{i}(t)-X_{i}^{\e}(t)|^{\eta}]\leq C\e^{\eta\kappa_{i}},\ \ t>0,\ \ \e\in(0,1\wedge t).
\]
\end{enumerate}
\end{Proposition}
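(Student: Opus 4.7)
The plan is to follow the argument of Proposition \ref{PROP:00} verbatim, but for the single $i$-th coordinate of the SDE \eqref{EXAMPLE:09} and with the componentwise integrability condition \eqref{EQ:05} replacing \eqref{MOMENT:00}. The guiding observation is that the noise driving the $i$-th equation is the \emph{scalar} Lévy process $Z_i$, whose Lévy measure inherits the moment bound with exponent $\gamma_i$, whereas the coefficients $b_i$ and $\sigma_i$ depend on the full vector $X$, so the regularity of $X$ across \emph{all} components must enter through a worst-case $\gamma=\max_j\gamma_j$.

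For part \textit{(a)}, I would split $X_i(t)-X_i(s)$ into its drift part $\int_s^t b_i(X(u))\,du$ and its stochastic integral $\int_s^t \sigma_i(X(u-))\,dZ_i(u)$. Boundedness of $b_i$ together with $\gamma_i\ge 1$ handles the drift by $C(t-s)^\eta\le C(t-s)^{\eta/\gamma_i}$. For the stochastic integral, Lemma \ref{LEMMA:00}(a) applied to the scalar Lévy process $Z_i$, whose Lévy measure satisfies the moment condition of order $\gamma_i$ by \eqref{EQ:05}, combined with boundedness of $\sigma_i$, yields the required $(t-s)^{\eta/\gamma_i}$.

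For part \textit{(b)}, I would decompose $X_i(t)-X_i^\e(t)=R_1+R_2$ with
\begin{align*}
R_1 &= \int_{t-\e}^t \bigl(b_i(X(u))-b_i(X(t-\e))\bigr)\,du, \\
R_2 &= \int_{t-\e}^t \bigl(\sigma_i(X(u-))-\sigma_i(X(t-\e))\bigr)\,dZ_i(u).
\end{align*}
For $R_1$, Jensen's inequality reduces matters to moments of $|b_i(X(u))-b_i(X(t-\e))|$; as in Proposition \ref{PROP:00} I would use the $(\beta_i\wedge\delta)$-Hölder property of $b_i$ (the truncation is needed so that the pending moment of $|X(u)-X(t-\e)|$ is within the admissible range $(0,\delta]$ of part \textit{(a)}), bound $|X(u)-X(t-\e)|\le\sum_j|X_j(u)-X_j(t-\e)|$, and apply \textit{(a)} componentwise. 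The slowest component controls the resulting rate, yielding $\E[|R_1|^\eta]\le C\e^{\eta+\eta(\beta_i\wedge\delta)/\gamma}$. For $R_2$, Lemma \ref{LEMMA:00}(a) applied to $Z_i$ contributes $\e^{\eta/\gamma_i}$, and the $(\chi_i\wedge(\delta/\gamma_i))$-Hölder continuity of $\sigma_i$ (with the truncation chosen so that the ensuing moment $\gamma_i(\chi_i\wedge(\delta/\gamma_i))\le\delta$ is admissible in part \textit{(a)}), combined with the componentwise estimate from \textit{(a)}, produces $\E[|R_2|^\eta]\le C\e^{\eta/\gamma_i+\eta(\chi_i\wedge(\delta/\gamma_i))/\gamma}$. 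Taking the minimum of the two exponents recovers exactly $\eta\kappa_i$.

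The main obstacle is keeping the asymmetry between $\gamma_i$ and $\gamma$ straight: the factor $1/\gamma_i$ in $\kappa_i$ stems from the stochastic integral being driven by the \emph{scalar} process $Z_i$, while the $1/\gamma$ in the denominators of the Hölder-correction terms is forced by the dependence of $b_i$ and $\sigma_i$ on all coordinates of $X$, whose joint modulus of continuity is governed by the slowest-regularizing component. Beyond this bookkeeping, the argument is a term-by-term adaptation of the proof of Proposition \ref{PROP:00}.
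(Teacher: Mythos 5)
Your proposal matches the paper's proof step for step: the same decomposition of $X_i(t)-X_i^\e(t)$ into a drift piece $R_1$ and a stochastic-integral piece $R_2$, the same truncated Hölder exponents $\beta_i\wedge\delta$ and $\chi_i\wedge(\delta/\gamma_i)$, the same use of Lemma~\ref{LEMMA:00}(a) on the scalar noise $Z_i$, and the same componentwise application of part (a) with the worst rate coming from $\gamma=\max_j\gamma_j$. You have also correctly isolated the one genuinely new bookkeeping point relative to Proposition~\ref{PROP:00}, namely that $1/\gamma_i$ enters through the driving process of the $i$-th equation while $1/\gamma$ enters through the full-vector dependence of $b_i,\sigma_i$; this is exactly what the paper's argument exploits.
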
 
\begin{proof} \textit{(a)} This can be shown exactly
in the same way as Proposition \ref{PROP:00}.

\textit{(b)} Write $\E[|X_{i}(t)-X_{i}^{\e}(t)|^{\eta}]\leq R_{1}+R_{2}$,
where
\begin{align*}
R_{1} & =\E\left[\left|\int\limits _{t-\e}^{t}(b_{i}(X(u))-b_{i}(X(t-\e)))du\right|^{\eta}\right],\\
R_{2} & =\E\left[\left|\int\limits _{t-\e}^{t}(\sigma_{i}(X(t-))-\sigma_{i}(X(t-\e)))dZ_{i}(u)\right|^{\eta}\right].
\end{align*}
For the first term we use $|b_{i}(x)-b_{i}(y)|\leq C|x-y|^{\beta_{i}\wedge\delta}$
and part \textit{(a)} to obtain
\begin{align*}
R_{1} & \leq\E\left[\int\limits _{t-\e}^{t}|b_{i}(X(u))-b_{i}(X(t-\e))|du\right]^{\eta}\\
 & \leq C\e^{\eta}\sup\limits _{u\in[t-\e,t]}\E\left[|X(u)-X(t-\e)|^{\beta_{i}\wedge\delta}\right]^{\eta}\\
 & \leq C\e^{\eta}\sum\limits _{j=1}^{d}\sup\limits _{u\in[t-\e,t]}\E\left[|X_{j}(u)-X_{j}(t-\e)|^{\beta_{i}\wedge\delta}\right]^{\eta}\leq C\e^{\eta+\frac{\eta(\beta_{i}\wedge\delta)}{\gamma}}.
\end{align*}
For the second term we use Lemma \ref{LEMMA:00}.(a), $|\sigma_{i}(x)-\sigma_{i}(y)|\leq C|x-y|^{\chi_{i}\wedge(\delta/\gamma_{i})}$
and then part \textit{(a)} to obtain
\begin{align*}
R_{2} & \leq C\e^{\eta/\gamma_{i}}\sup\limits _{u\in[t-\e,t]}\E\left[|\sigma_{i}(X(u))-\sigma_{i}(X(t-\e))|^{\gamma_{i}}\right]^{\eta/\gamma_{i}}\\
 & \leq C\e^{\eta/\gamma_{i}}\sup\limits _{u\in[t-\e,t]}\E[|X(u)-X(t-\e)|^{\gamma_{i}(\chi_{i}\wedge(\delta/\gamma_{i}))}]^{\eta/\gamma_{i}}\\
 & \leq C\e^{\eta/\gamma_{i}}\sum\limits _{j=1}^{d}\sup\limits _{u\in[t-\e,t]}\E[|X_{j}(u)-X_{j}(t-\e)|^{\gamma_{i}(\chi_{i}\wedge(\delta/\gamma_{i}))}]^{\eta/\gamma_{i}}\leq C\e^{\frac{\eta}{\gamma_{i}}+\frac{\eta}{\gamma}\chi_{i}\wedge(\delta/\gamma_{i})}.
\end{align*}
This proves the assertion. \end{proof}

\subsection{Approximation when $\gamma_{i}\in(0,1)$}

In this Section we proceed similarly as in the proof of Proposition
\ref{PROP:04}. For $i\in\{1,\dots,d\}$ such that $\gamma_{i}\in(0,1)$,
define
\[
\kappa_{i}=\min\left\{ 1+\frac{\beta_{i}\wedge\chi_{i}}{\gamma},\frac{1}{\gamma_{i}}+\frac{\chi_{i}}{\max\{1,\gamma\}},\frac{1}{1-\rho}\right\} ,
\]
where $\rho=\min\{\beta_{j}\wedge\chi_{j}\ |\ j\in\{1,\dots,d\}\}$.
The following is our main estimate for this section. \begin{Proposition}\label{PROP:06}
Let $Z$ be a Lévy process with Lévy measure $\nu$ and symbol \eqref{SYMBOL}.
Suppose that \eqref{EQ:05} and (A3) are satisfied. Let $i\in\{1,\dots,d\}$
be such that $\gamma_{i}\in(0,1)$ and let $X$ be as in \eqref{EQ:12}.
Then
\begin{enumerate}
\item[(a)] For each $\eta\in(0,\delta_{i}]$ there exists a constant $C>0$
such that, for all $0\leq s\leq t\leq s+1$, it holds that
\[
\E[|X_{i}(t)-X_{i}(s)|^{\eta}]\leq C(t-s)^{\eta}.
\]
Moreover, if $\eta\in(0,1)$, then
\[
\E[|X_{i}(t)-X_{i}(s)|^{\eta}\wedge1]\leq C(t-s)^{\eta}.
\]
\item[(b)] For each $t>0$ and $\e\in(0,1\wedge t)$ there exists a $\F_{t-\e}$-measurable
random variable $U_{i}^{\e}(t)$ such that, setting
\[
X_{i}^{\e}(t)=U_{i}^{\e}(t)+\sigma_{i}(X(t-\e))(Z_{i}(t)-Z_{i}(t-\e)),
\]
for any $\eta\in(0,\delta_{i}]$ there exists a constant $C>0$ with
\[
\E\left[|X_{i}(t)-X_{i}^{\e}(t)|^{\eta}\right]\leq C\e^{\eta\kappa_{i}},\ \ t>0,\ \ \e\in(0,1\wedge t).
\]
\end{enumerate}
\end{Proposition}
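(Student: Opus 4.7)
The plan is to mimic the proof of Proposition \ref{PROP:04}, working componentwise but building a single vector-valued Euler scheme that captures the full dependence of $b_i$ and $\sigma_i$ on all coordinates of $X$. As in that proof, the key idea is to remove the compensator by passing to the driftless Lévy process $Y_i(t) = Z_i(t) + t\int_{|z|\leq 1} z_i \nu(dz)$ and absorbing the resulting term into a modified drift $\widetilde{b}_i(x) = b_i(x) - \sigma_i(x)\int_{|z|\leq 1} z_i\nu(dz)$, which is still bounded and $(\beta_i\wedge\chi_i)$-Hölder continuous.

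For part (a), I would write
\[
X_i(t) - X_i(s) = \int_s^t \widetilde{b}_i(X(u))\, du + \int_s^t \sigma_i(X(u-))\, dY_i(u),
\]
and estimate the two terms separately. The drift integral contributes $C(t-s)^\eta$ by boundedness of $\widetilde b_i$. For the stochastic integral, Lemma \ref{LEMMA:00}.(b) applied with parameter $\gamma_i\in(0,1)$ gives a bound of order $(t-s)^{\eta/\gamma_i}$, which dominates $(t-s)^\eta$ since $1/\gamma_i \geq 1$. Combining them and using that the left-hand side is also bounded by $1$ when $\eta < 1$ gives both claimed estimates.

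For part (b), I would set $\tau = \e^{1/(1-\rho)}$ with $\rho=\min_j\{\beta_j\wedge\chi_j\}$ and $s_\tau = t-\e + \tau\lfloor (s-(t-\e))/\tau\rfloor$, and then construct a vector-valued Euler scheme $W^\e:[t-\e,t]\to\R^d$ defined by
\[
W^\e(u) = X(t-\e) + \int_{t-\e}^u \widetilde b(W^\e(s_\tau))\, ds,
\]
so that each $W_i^\e(t)$ is a deterministic continuous function of $X(t-\e)$ and hence $\F_{t-\e}$-measurable. The approximation is then
\[
X_i^\e(t) = W_i^\e(t) + \sigma_i(X(t-\e))(Y_i(t)-Y_i(t-\e)),\quad U_i^\e(t) = W_i^\e(t) + \e\sigma_i(X(t-\e))\!\!\int_{|z|\leq 1}\!\! z_i\,\nu(dz).
\]
Writing $X_i(t)-X_i^\e(t) = I_\e^{(i)} + J_\e^{(i)} + K_\e^{(i)}$ as in the proof of Proposition \ref{PROP:04}, with the obvious componentwise definitions, the terms $J_\e^{(i)}$ and $K_\e^{(i)}$ are handled directly: Lemma \ref{LEMMA:00}.(b) with exponent $\gamma_i$ together with the $\chi_i$-Hölder estimate of $\sigma_i$ and part (a) yields $\E[(J_\e^{(i)})^\eta]\leq C\e^{\eta(1/\gamma_i + \chi_i/\max\{1,\gamma\})}$, while the $\rho$-Hölder bound on $\widetilde b_i$ combined with $|W^\e(s) - W^\e(s_\tau)|\leq C\tau$ gives $\E[(K_\e^{(i)})^\eta]\leq C\e^{\eta/(1-\rho)}$.

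The main obstacle, and the place where the vector-valued Euler scheme is essential, is the $I_\e^{(i)}$ estimate: since $\widetilde b_i$ depends on all coordinates, I cannot decouple the components and must control $S^\e(t) := \sup_{s\in[t-\e,t]}|X(s)-W^\e(s)|$ globally. Introducing $R^\e(t) = \sum_{s\in[t-\e,t]}|\Delta Y(s)|$ (for the full driving process $Y$), the Hölder continuity of $\widetilde b$ with exponent $\rho$ together with Young's inequality yields, as in Proposition \ref{PROP:04},
\[
S^\e(t) \leq C R^\e(t) + C\e^{1/(1-\rho)},
\]
and then $I_\e^{(i)} \leq C\e (R^\e(t)^\rho + \e^{\rho/(1-\rho)})$. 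A final application of Lemma \ref{LEMMA:00}.(b) (with $\gamma$ the global exponent from \eqref{EQ:05}) gives $\E[(I_\e^{(i)})^\eta]\leq C(\e^{\eta/(1-\rho)} + \e^{\eta(1+\rho/\gamma)})$, and taking the minimum of the three exponents reproduces $\kappa_i$.
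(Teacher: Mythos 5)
Your overall strategy — componentwise decomposition, absorbing the compensator of $Z_i$ into a modified drift $\widetilde{b}_i$, a vector-valued Euler scheme $W^\e$ on a grid of step $\tau=\e^{1/(1-\rho)}$, and a Gr\"onwall-type control of $S^\e$ via Young's inequality — matches the paper's proof. But there are two concrete gaps in the treatment of the $I_\e$ term.

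First, you define the error control through $R^\e(t)=\sum_{s\in[t-\e,t]}|\Delta Y(s)|$ for the ``full driving process $Y$''. This is imported from Proposition \ref{PROP:04}, where \emph{all} small jumps of $Z$ have finite variation. Here, however, only $\gamma_i<1$ is assumed; other components may satisfy $\gamma_j\in[1,2]$, in which case $\int_{|z|\le1}z_j\,\nu(dz)$ need not converge, so $Y_j$ cannot be defined, and in any case $\sum_s|\Delta Z_j(s)|=\infty$ a.s.\ on $[t-\e,t]$. Your $R^\e(t)$ is thus either ill-defined or infinite. The same problem infects the vector-valued $\widetilde{b}$: you never say what $\widetilde{b}_j$ is for $j$ with $\gamma_j\ge1$, yet the modification $b_j - \sigma_j\int z_j\nu(dz)$ is only available when $\gamma_j<1$. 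The paper resolves both issues simultaneously by a case split: $\widetilde{b}_j=b_j$ when $\gamma_j\in[1,2]$ and $\widetilde{b}_j=b_j-\sigma_j\int_{|z|\le1}z_j\nu(dz)$ when $\gamma_j\in(0,1)$, and by replacing the jump sum with componentwise quantities $R_j^\e(t)$ defined as $|\int_{t-\e}^t\sigma_j(X(s-))\,dZ_j(s)|$ if $\gamma_j\ge1$ and $|\int_{t-\e}^t\sigma_j(X(s-))\,dY_j(s)|$ if $\gamma_j<1$, which are always well-defined and are estimated via Lemma \ref{LEMMA:00}.(a) and (b) respectively.

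Second, in bounding $I_\e^{(i)}$ you apply the H\"older continuity of $\widetilde{b}_i$ with exponent $\rho=\min_j\{\beta_j\wedge\chi_j\}$, writing $I_\e^{(i)}\le C\e\,(R^\e(t)^\rho+\e^{\rho/(1-\rho)})$, and conclude $\E[(I_\e^{(i)})^\eta]\le C(\e^{\eta/(1-\rho)}+\e^{\eta(1+\rho/\gamma)})$. But $\widetilde{b}_i$ is $(\beta_i\wedge\chi_i)$-H\"older, and $\beta_i\wedge\chi_i\ge\rho$; using only $\rho$ here throws away precision, and since $1+\rho/\gamma\le 1+(\beta_i\wedge\chi_i)/\gamma$ your exponent $\eta(1+\rho/\gamma)$ can be strictly smaller than $\eta\kappa_i$ when $\kappa_i=1+(\beta_i\wedge\chi_i)/\gamma$, so the estimate does not give the claimed rate. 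The paper keeps the exponent $\beta_i\wedge\chi_i$ in the final step, bounding $I_\e\le C\e\bigl(\sum_j S_j^\e(t)\bigr)^{\beta_i\wedge\chi_i}\le C\e\sum_j(R_j^\e(t)^{\beta_i\wedge\chi_i}+\e^{(\beta_i\wedge\chi_i)/(1-\rho)})$ and then uses $\E[R_j^\e(t)^{\eta(\beta_i\wedge\chi_i)}]\le C\e^{\eta(\beta_i\wedge\chi_i)/\gamma}$, which yields exactly the term $1+(\beta_i\wedge\chi_i)/\gamma$ in $\kappa_i$.
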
 \begin{proof} \textit{(a)} This can be shown exactly
in the same way as Proposition \ref{PROP:04}.(a).

\textit{(b)} Define $Y_{i}(t)=Z_{i}(t)-t\int_{\R^{d}}\1_{\{|z|\leq1\}}z_{i}\nu(dz)$
and
\[
\widetilde{b}_{j}(x)=\begin{cases}
b_{j}(x), & \gamma_{j}\in[1,2]\\
b_{j}(x)-\sigma_{j}(x)\int\limits _{\R^{d}}\1_{\{|z|\leq1\}}z_{j}\nu(dz), & \gamma_{j}\in(0,1)
\end{cases},\ \ j\in\{1,\dots,d\}.
\]
Set $\tau=\e^{1/(1-\rho)}$ and define, for $s\in[t-\e,t]$, $s_{\tau}=t-\e+\tau\lfloor(s-(t-\e))/\tau\rfloor$.
Let $W^{\e}$ be the solution to
\[
W^{\e}(u)=X(t-\e)+\int\limits _{t-\e}^{u}\widetilde{b}(W^{\e}(s_{\tau}))ds,\ \ u\in[t-\e,t].
\]
Then $W^{\e}(t)$ is well-defined and $\F_{t-\e}$-measurable, because
it is a deterministic continuous function of $X(t-\e)$. Define
\begin{align}
X_{i}^{\e}(t) & =U_{i}^{\e}(t)+\sigma_{i}(X(t-\e))(Z_{i}(t)-Z_{i}(t-\e)),\label{APPROXIMATION:01}\\
U_{i}^{\e}(t) & =W_{i}^{\e}(t)+\e\sigma_{i}(X(t-\e))\int_{\R^{d}}\1_{\{|z|\leq1\}}z_{i}\nu(dz).\nonumber
\end{align}
It remains to prove the desired estimate. Thus write
\begin{align*}
X_{i}^{\e}(t)=X_{i}(t-\e)+\int\limits _{t-\e}^{t}\widetilde{b}_{i}(W^{\e}(s))ds+\int\limits _{t-\e}^{t}\sigma_{i}(X(t-\e))dY_{i}(s)+\int\limits _{t-\e}^{t}\left(\widetilde{b}_{i}(W^{\e}(s_{\tau}))-\widetilde{b}_{i}(W^{\e}(s))\right)ds,
\end{align*}
so that $\E[|X_{i}(t)-X_{i}^{\e}(t)|^{\eta}]\leq\E[I_{\e}^{\eta}]+\E[J_{\e}^{\eta}]+\E[J_{\e}^{\eta}]$
with
\begin{align*}
I_{\e} & =\int\limits _{t-\e}^{t}|\widetilde{b}_{i}(X(s))-\widetilde{b}_{i}(W^{\e}(s))|ds,\\
J_{\e} & =\left|\int\limits _{t-\e}^{t}(\sigma_{i}(X(s-))-\sigma_{i}(X(t-\e)))dY_{i}(s)\right|,\\
K_{\e} & =\int\limits _{t-\e}^{t}|\widetilde{b}_{i}(W^{\e}(s_{\tau}))-\widetilde{b}_{i}(W^{\e}(s))|ds
\end{align*}
For the second term we apply Lemma \ref{LEMMA:00}.(b), the Hölder
continuity of $\sigma$ and part \textit{(a)}, to obtain
\begin{align*}
\E[J_{\e}^{\eta}] & \leq C\e^{\eta/\gamma_{i}}\sup\limits _{s\in[t-\e,t]}\E[|\sigma_{i}(X(s))-\sigma_{i}(X(t-\e))|^{\gamma_{i}}]^{\eta/\gamma_{i}}\\
 & \leq C\e^{\eta/\gamma_{i}}\sup\limits _{s\in[t-\e,t]}\E[|X(s)-X(t-\e)|^{\gamma_{i}\chi_{i}}\wedge1]^{\eta/\gamma_{i}}\\
 & \leq C\e^{\eta/\gamma_{i}}\sum\limits _{j=1}^{d}\sup\limits _{s\in[t-\e,t]}\E[|X_{j}(s)-X_{j}(t-\e)|^{\gamma_{i}\chi_{i}}\wedge1]^{\eta/\gamma_{i}}\\
 & \leq C\sum\limits _{j=1}^{d}\left(\1_{[1,2]}(\gamma_{j})\e^{\frac{\eta}{\gamma_{i}}+\frac{\eta\chi_{i}}{\gamma_{j}}}+\1_{(0,1)}(\gamma_{j})\e^{\frac{\eta}{\gamma_{i}}+\chi_{i}}\right)\leq C\e^{\eta\kappa_{i}}.
\end{align*}
For the last term we use the $(\beta_{i}\wedge\chi_{i})$-Hölder continuity
of $\widetilde{b}_{i}$ to obtain
\begin{align*}
\E[K_{\e}^{\eta}]\leq C\E\left[\int\limits _{t-\e}^{t}|W^{\e}(s_{\tau})-W^{\e}(s)|^{\beta_{i}\wedge\chi_{i}}ds\right]^{\eta}\leq C\e\tau^{\eta(\beta_{i}\wedge\chi_{i})}=C\e^{\frac{\eta}{1-\rho}},
\end{align*}
where we have used $|W^{\e}(s)-W^{\e}(s_{\tau})|\leq C\tau$ (since
$\widetilde{b}$ is bounded), and $\beta_{i}\wedge\chi_{i}\geq\rho$.
Hence it remains to show that
\[
\E[I_{\e}^{\eta}]\leq C\left(\e^{\eta\left(1+\frac{\beta_{i}\wedge\chi_{i}}{\gamma}\right)}+\e^{\frac{\eta}{1-\rho}}\right).
\]
For this purpose, write
\[
W^{\e}(u)=X(t-\e)+\int\limits _{t-\e}^{u}\widetilde{b}(W^{\e}(s))ds+\int\limits _{t-\e}^{u}\left(\widetilde{b}(W^{\e}(s_{\tau}))-\widetilde{b}(W^{\e}(s))\right)ds,
\]
and let, for each $j\in\{1,\dots,d\}$,
\[
R_{j}^{\e}(t)=\begin{cases}
\left|\int\limits _{t-\e}^{t}\sigma_{j}(X(s-))dY_{j}(s)\right|, & \gamma_{j}\in(0,1)\\
\left|\int\limits _{t-\e}^{t}\sigma_{j}(X(s-))dZ_{j}(s)\right|, & \gamma_{j}\in[1,2]
\end{cases}.
\]
Then, for each $j\in\{1,\dots,d\}$, we obtain
\[
|X_{j}(t)-W_{j}^{\e}(t)|\leq\int\limits _{t-\e}^{t}|\widetilde{b}_{j}(W^{\e}(s_{\tau}))-\widetilde{b}_{j}(W^{\e}(s))|ds+\int\limits _{t-\e}^{t}|\widetilde{b}_{j}(X(s))-\widetilde{b}_{j}(W^{\e}(s))|ds+R_{j}^{\e}(t).
\]
Setting $S_{j}^{\e}(t)=\sup_{s\in[t-\e,t]}|X_{j}(s)-W_{j}^{\e}(s)|$,
using the $(\beta_{j}\wedge\chi_{j})$-Hölder continuity of $\widetilde{b}_{j}$
and that $|W^{\e}(s)-W^{\e}(s_{\tau})|\leq C\tau$, we obtain
\begin{align*}
S_{j}^{\e}(t) & \leq R_{j}^{\e}(t)+C\e\tau^{\beta_{j}\wedge\chi_{j}}+\sum\limits _{k=1}^{d}(C\e)S_{k}^{\e}(t)^{\beta_{j}\wedge\chi_{j}}\\
 & \leq R_{j}^{\e}(t)+C\e^{\frac{1}{1-\rho}}+\beta_{j}\wedge\chi_{j}\sum\limits _{k=1}^{d}S_{k}^{\e}(t)^{\beta_{j}\wedge\chi_{j}},
\end{align*}
where we have used $\beta_{j}\wedge\chi_{j}\geq\rho$, so that $\e\tau^{\beta_{j}\wedge\chi_{j}}\leq\e^{\frac{1}{1-\rho}}$,
and the Young inequality $xy\leq(1-\beta_{j}\wedge\chi_{j})x^{1/(1-\beta_{j}\wedge\chi_{j})}+\beta_{j}\wedge\chi_{j}y^{1/\beta_{j}\wedge\chi_{j}}$
with $x=C\e$ and $y=S_{k}^{\e}(t)^{\beta_{j}\wedge\chi_{j}}$. Taking
the sum over $j\in\{1,\dots,d\}$ and using that $\beta_{j}\wedge\chi_{j}<1$,
we obtain
\begin{align*}
\sum\limits _{j=1}^{d}S_{j}^{\e}(t)\leq C\sum\limits _{j=1}^{d}R_{j}^{\e}(t)+C\e^{\frac{1}{1-\rho}}.
\end{align*}
Using this inequality we obtain
\begin{align*}
I_{\e}\leq C\int_{t-\e}^{t}|X(s)-W^{\e}(s)|^{\beta_{i}\wedge\chi_{i}}ds\leq C\e\left(\sum\limits _{j=1}^{d}S_{j}^{\e}(t)\right)^{\beta_{i}\wedge\chi_{i}}\leq C\e\sum\limits _{j=1}^{d}\left(R_{j}^{\e}(t)^{\beta_{i}\wedge\chi_{i}}+\e^{\frac{\beta_{i}\wedge\chi_{i}}{1-\rho}}\right)
\end{align*}
Using now Lemma \ref{LEMMA:00} we obtain
\[
\E[I_{\e}^{\eta}]\leq C\e^{\eta}\left(\e^{\eta\frac{\beta_{i}\wedge\chi_{i}}{1-\rho}}+\E[R_{j}^{\e}(t)^{\eta(\beta_{i}\wedge\chi_{i})}]\right)\leq C\left(\e^{\frac{\eta}{1-\rho}}+\e^{\eta+\eta\frac{\beta_{i}\wedge\chi_{i}}{\gamma}}\right).
\]
This proves the assertion. \end{proof}

\subsection{Concluding the proof of Theorem \ref{THEOREM:03}}

As in the proof of Theorem \ref{THEOREM:01}, we start with a similar
esimate to Proposition \ref{PROP:01}. \begin{Proposition} Let $Z$
be a Lévy process with Lévy measure $\nu$ and symbol \eqref{SYMBOL}.
Suppose that \eqref{EQ:05}, (A1) and (A3) are satisfied. Let $X$
be as in \eqref{EQ:12}, take an anisotropy $a=(a_{i})_{i\in\{1,\dots,d\}}$
and $\eta\in(0,1)$ with
\begin{align}
\frac{\eta}{a_{j}}\leq1\wedge\delta,\ \ j\in\{1,\dots,d\}.\label{EQ:00}
\end{align}
Then there exists a constant $C=C(\eta)>0$ and $\e_{0}\in(0,1\wedge t)$
such that, for any $\e\in(0,\e_{0})$, $h\in[-1,1]$, $\phi\in C_{b}^{\eta,a}(\R^{d})$
and $i\in\{1,\dots,d\}$,
\[
\left|\E\left[|\sigma^{-1}(X(t))|^{-1}\Delta_{he_{i}}\phi(X(t))\right]\right|\leq C\|\phi\|_{C_{b}^{\eta,a}}\left(|h|^{\eta/a_{i}}\e^{\frac{\chi\wedge\delta}{\max\{1,\gamma\}}}+|h|\e^{-1/\alpha_{i}}+\max\limits _{j\in\{1,\dots,d\}}\e^{\eta\kappa_{j}/a_{j}}\right),
\]
where $\chi=\min\{\chi_{1},\dots,\chi_{d}\}$. \end{Proposition}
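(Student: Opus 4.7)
The plan is to mirror the proof of Proposition \ref{PROP:01}, replacing the single-Lévy approximation by the componentwise approximations of Propositions \ref{PROP:05} and \ref{PROP:06}. Set $X^{\e}(t)=(X_{1}^{\e}(t),\ldots,X_{d}^{\e}(t))$ where, for each $j$, $X_{j}^{\e}(t)$ is given by Proposition \ref{PROP:05}.(b) if $\gamma_{j}\in[1,2]$ and by Proposition \ref{PROP:06}.(b) if $\gamma_{j}\in(0,1)$. In either case $X_{j}^{\e}(t)=U_{j}^{\e}(t)+\sigma_{j}(X(t-\e))(Z_{j}(t)-Z_{j}(t-\e))$ with $U_{j}^{\e}(t)$ being $\F_{t-\e}$-measurable, and since $\sigma$ is diagonal this assembles into $X^{\e}(t)=U^{\e}(t)+\sigma(X(t-\e))(Z(t)-Z(t-\e))$ with $U^{\e}(t)$ being $\F_{t-\e}$-measurable. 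One then splits $\E[|\sigma^{-1}(X(t))|^{-1}\Delta_{he_{i}}\phi(X(t))]=R_{1}+R_{2}+R_{3}$ exactly as in the proof of Proposition \ref{PROP:01}.

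For $R_{1}$, the diagonality of $\sigma$ yields $|\sigma(x)-\sigma(y)|=\max_{k}|\sigma_{k}(x)-\sigma_{k}(y)|$, and combining boundedness and the $\chi_{k}$-Hölder continuity of each $\sigma_{k}$ with $\chi=\min_{k}\chi_{k}$ produces $|\sigma(x)-\sigma(y)|\leq C|x-y|^{\chi\wedge\delta}$. Controlling the difference of reciprocal norms by this quantity via \eqref{MATRIX:INEQ}, bounding $|\Delta_{he_{i}}\phi|\leq\|\phi\|_{C_{b}^{\eta,a}}|h|^{\eta/a_{i}}$, and finally applying Proposition \ref{PROP:05}.(a) / Proposition \ref{PROP:06}.(a) componentwise gives $R_{1}\leq C\|\phi\|_{C_{b}^{\eta,a}}|h|^{\eta/a_{i}}\e^{(\chi\wedge\delta)/\max\{1,\gamma\}}$. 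For $R_{2}$, the anisotropic Hölder continuity of $\phi$ yields
\[
|\Delta_{he_{i}}\phi(X(t))-\Delta_{he_{i}}\phi(X^{\e}(t))|\leq C\|\phi\|_{C_{b}^{\eta,a}}\sum_{j=1}^{d}\left(|X_{j}(t)-X_{j}^{\e}(t)|^{\eta/a_{j}}\wedge 1\right),
\]
and the assumption \eqref{EQ:00} ensures $\eta/a_{j}\leq 1\wedge\delta\leq 1\wedge\delta_{j}$, so Proposition \ref{PROP:05}.(b) / Proposition \ref{PROP:06}.(b) applied to each coordinate gives $R_{2}\leq C\|\phi\|_{C_{b}^{\eta,a}}\max_{j}\e^{\eta\kappa_{j}/a_{j}}$.

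For $R_{3}$, conditioning on $\F_{t-\e}$ and integrating against $f_{\e}(z)\,dz$, the substitution $z\mapsto z+(h/\sigma_{i}(X(t-\e)))e_{i}$ transfers the finite difference from $\phi$ onto $f_{\e}$. The diagonality of $\sigma$ is essential here: since $\sigma(X(t-\e))e_{i}=\sigma_{i}(X(t-\e))e_{i}$, the shift imposed on $f_{\e}$ lies along the single coordinate direction $e_{i}$, so condition (A1) directly bounds the resulting $L^{1}$-norm by $C(|h|/|\sigma_{i}(X(t-\e))|)\e^{-1/\alpha_{i}}$ for $\e\in(0,\e_{0})$ with $\e_{0}$ small enough. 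Since $|\sigma^{-1}(X(t-\e))|^{-1}=\min_{k}|\sigma_{k}(X(t-\e))|\leq|\sigma_{i}(X(t-\e))|$, the ratio inside the expectation is bounded by $1$, giving $R_{3}\leq C\|\phi\|_{C_{b}^{\eta,a}}|h|\e^{-1/\alpha_{i}}$. Summing the three bounds yields the assertion. The main technical point to track is the appearance of different approximation rates $\kappa_{j}$ in different coordinates (reflected by the $\max_{j}$ in the final estimate), together with the observation that the diagonal structure of $\sigma$ is precisely what keeps the bound in $R_{3}$ aligned with the single exponent $\alpha_{i}$ rather than producing a sum over all $\alpha_{k}$.
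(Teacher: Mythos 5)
Your proposal is correct and follows the same route the paper takes: the paper only remarks that this proposition ``can be shown in the same way as Proposition \ref{PROP:01}'' and omits the argument, which you have faithfully reconstructed with the componentwise approximations from Propositions \ref{PROP:05} and \ref{PROP:06} substituted throughout. Your observation that the diagonal structure of $\sigma$ is precisely what keeps the shift imposed on $f_{\e}$ in the $R_{3}$ estimate along the single coordinate direction $e_{i}$, so that condition (A1) applies directly with the rate $\e^{-1/\alpha_{i}}$, is accurate and worth stating explicitly; it is the step where the diagonal hypothesis (A3) does genuine work compared to the general Proposition \ref{PROP:01}.
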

This proposition can be shown in the same way as Proposition \ref{PROP:01}.
A proof is therefore omitted. The proof of Theorem \ref{THEOREM:03}
is completed, provided we can show that Lemma \ref{LEMMA:02} is applicable
to the finite measure
\[
\mu_{t}(A)=\E[|\sigma^{-1}(X(t))|^{-1}\1_{A}(X(t))],\ \ t>0.
\]
This can be shown in exactly the same way as in the proof of Theorem
\ref{THEOREM:01}.

\section{Smoothing property of the Lévy noise}

The following is a simple modification of \citep[Lemma 3.3]{DF13},
it provides a general estimate on the derivative of the density
of a Lévy process. 
\begin{Proposition}\label{PROP:03} 
Let $Z$ be
a Lévy process with Lévy measure $\nu$ and symbol \eqref{SYMBOL}.
Suppose that
\begin{align}
\liminf\limits _{|\xi|\longrightarrow\infty}\frac{\mathrm{Re}(\Psi_{\nu}(\xi))}{\log(1+|\xi|)}=\infty,\label{EQ:04}
\end{align}
and assume that there exists $t_{0}>0$ and $C_{1}>0$ such that
\begin{align}
\int\limits _{\R^{d}}e^{-t\mathrm{Re}(\Psi_{\nu}(\xi))}|\xi|^{d+2}d\xi\leq C_{1}\Xi(t)^{2d+2},\ \ t\in(0,t_{0}),\label{EQ:29}
\end{align}
where $\delta(\eta)=\sup_{|\xi|\leq\eta}\mathrm{Re}(\Psi_{\nu}(\xi))$
and $\Xi(t)=\delta^{-1}(1/t)$. Then $Z(t)$ has a density $g_{t}\in C^{1}(\R^{d})$
such that $\nabla g_{t}\in L^{1}(\R^{d})$ and for some constant $C_{2}>0$ and $t_1 > 0$,
\begin{align}
\|\nabla g_{t}\|_{L^{1}(\R^{d})}\leq C_{2}\Xi(t\wedge t_1),\ \ t > 0.\label{APPROX:04}
\end{align}
\end{Proposition}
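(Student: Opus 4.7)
I would adapt the proof of \citep[Lemma 3.3]{DF13}, replacing the stable-like scale $t^{-1/\alpha}$ by the general quantity $\Xi(t)$. The argument splits naturally into four steps: construction of a smooth density, a Plancherel-based $L^{2}$ estimate of the derivative, upgrading to an $L^{1}$ estimate via weighted duality, and a semigroup argument for large times.

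\emph{Step 1: smoothness of the density.} Condition \eqref{EQ:04} forces $e^{-t\mathrm{Re}(\Psi_{\nu}(\xi))}$ to decay faster than any polynomial in $|\xi|$. Hence $\xi\mapsto \xi^{\beta}e^{-t\Psi_{\nu}(\xi)}$ lies in $L^{1}(\R^{d})$ for every multi-index $\beta$ and every $t>0$. The Fourier inversion formula therefore defines a smooth function
\[
g_{t}(x)=\frac{1}{(2\pi)^{d}}\int_{\R^{d}} e^{-ix\cdot\xi}\,e^{-t\Psi_{\nu}(\xi)}\,d\xi,
\]
which, since $e^{-t\Psi_{\nu}}$ is the characteristic function of $Z(t)$, is the density of $Z(t)$. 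Differentiating under the integral yields $g_{t}\in C^{1}(\R^{d})$ together with $\partial_{k}g_{t}(x)=-(2\pi)^{-d}\int i\xi_{k}e^{-ix\cdot\xi}e^{-t\Psi_{\nu}(\xi)}\,d\xi$.

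\emph{Step 2: $L^{2}$-bound.} By Plancherel and $e^{-2t\mathrm{Re}\Psi_{\nu}}\leq e^{-t\mathrm{Re}\Psi_{\nu}}$,
\[
\|\partial_{k}g_{t}\|_{L^{2}}^{2}\leq (2\pi)^{-d}\int_{\R^{d}}|\xi|^{2}e^{-t\mathrm{Re}(\Psi_{\nu}(\xi))}\,d\xi.
\]
Splitting the $\xi$-integral at $|\xi|=\Xi(t)$, estimating the inner region by $e^{-t\mathrm{Re}\Psi_{\nu}}\leq 1$ to get $C\Xi(t)^{d+2}$, and the outer region by $|\xi|^{2}\leq |\xi|^{d+2}\Xi(t)^{-d}$ together with \eqref{EQ:29}, gives $\|\partial_{k}g_{t}\|_{L^{2}}^{2}\leq C\,\Xi(t)^{d+2}$ for $t\in(0,t_{0})$.

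\emph{Step 3: upgrading to $L^{1}$.} Introduce the scale-adapted weight $w_{t}(x)=(1+\Xi(t)^{2}|x|^{2})^{s/2}$ for an integer $s>d/2$ and apply Cauchy--Schwarz:
\[
\|\partial_{k}g_{t}\|_{L^{1}}\leq \|w_{t}^{-1}\|_{L^{2}}\cdot \|w_{t}\,\partial_{k}g_{t}\|_{L^{2}}.
\]
A change of variables gives $\|w_{t}^{-1}\|_{L^{2}}^{2}=C\Xi(t)^{-d}$. For the weighted $L^{2}$ norm, a naive Plancherel would generate $(1-\Delta_{\xi})^{s/2}$ applied to $\xi_{k}e^{-t\Psi_{\nu}(\xi)}$ and thereby uncontrolled derivatives of $\Psi_{\nu}$. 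The device of \citep[Lemma 3.3]{DF13} avoids this by using the semigroup factorization $g_{t}=g_{t/2}\ast g_{t/2}$ and the Leibniz-type expansion $x^{\alpha}=((x-y)+y)^{\alpha}$ inside the convolution: the weights are distributed across the two factors, one of which is the probability density $g_{t/2}$, so that only an un-weighted $L^{2}$ estimate of the type obtained in Step 2 is required. This leads to $\|w_{t}\,\partial_{k}g_{t}\|_{L^{2}}^{2}\leq C\,\Xi(t)^{d+2}$, whence
\[
\|\partial_{k}g_{t}\|_{L^{1}}\leq C\,\Xi(t)^{-d/2}\cdot \Xi(t)^{(d+2)/2}=C\,\Xi(t), \qquad t\in(0,t_{0}).
\]

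\emph{Step 4: large $t$.} Choose $t_{1}\in(0,t_{0}]$ and, for $t\geq t_{1}$, decompose $Z(t)=Z(t_{1})+(Z(t)-Z(t_{1}))$ into independent increments. Then $g_{t}=g_{t_{1}}\ast h_{t-t_{1}}$ for a probability density $h_{t-t_{1}}$, so $\partial_{k}g_{t}=(\partial_{k}g_{t_{1}})\ast h_{t-t_{1}}$ and Young's inequality yields $\|\partial_{k}g_{t}\|_{L^{1}}\leq \|\partial_{k}g_{t_{1}}\|_{L^{1}}\leq C\,\Xi(t_{1})=C\,\Xi(t\wedge t_{1})$. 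Combined with Step 3 this proves \eqref{APPROX:04} for every $t>0$.

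\emph{Main obstacle.} The delicate point is Step 3: turning the $L^{2}$-estimate on $\partial_{k}g_{t}$ into an $L^{1}$-estimate with the sharp scaling $\Xi(t)$, using only the integral bound \eqref{EQ:29} and no pointwise control on derivatives of $\Psi_{\nu}$. The exponent $2d+2$ in \eqref{EQ:29} is exactly what is needed so that the scalings of $\|w_{t}^{-1}\|_{L^{2}}$ and $\|w_{t}\partial_{k}g_{t}\|_{L^{2}}$ combine to produce the power $\Xi(t)^{1}$ rather than $\Xi(t)^{d+1}$; implementing this balance via the semigroup trick from \citep[Lemma 3.3]{DF13} is the heart of the argument.
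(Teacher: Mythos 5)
Your Steps 1, 2, and 4 are fine: smoothness follows from the Hartman--Wintner condition \eqref{EQ:04}, the unweighted bound $\|\partial_{k}g_{t}\|_{L^{2}}^{2}\leq C\Xi(t)^{d+2}$ follows from splitting the Plancherel integral at $|\xi|=\Xi(t)$ and invoking \eqref{EQ:29}, and the large-$t$ argument with $g_{t}=g_{t_{1}}\ast h_{t-t_{1}}$ and Young's inequality is exactly what the paper does. The gap is in Step~3, and it is not just a matter of filling in details: the argument does not close. The semigroup factorization $g_{t}=g_{t/2}\ast g_{t/2}$ together with the sub-additivity $w_{t}(x)\leq C\bigl(w_{t}(x-y)+w_{t}(y)\bigr)$ gives
\begin{align*}
 w_{t}|\partial_{k}g_{t}| \leq C\,\bigl(w_{t}g_{t/2}\bigr)\ast|\partial_{k}g_{t/2}| + C\,g_{t/2}\ast\bigl(w_{t}|\partial_{k}g_{t/2}|\bigr),
\end{align*}
and Young's inequality then requires control of either $\|w_{t}g_{t/2}\|_{L^{1}}=\E\bigl[(1+\Xi(t)^{2}|Z(t/2)|^{2})^{s/2}\bigr]$ or $\|w_{t}\,\partial_{k}g_{t/2}\|_{L^{2}}$. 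The first is a weighted moment of $Z(t/2)$ of order at least $s>d/2\geq 1/2$, which the hypotheses \eqref{EQ:04}--\eqref{EQ:29} do not supply: in $d=1$ the Cauchy process satisfies both (with $\Xi(t)=1/t$), yet has infinite first absolute moment, so this term is $+\infty$. The second is a weighted $L^{2}$ norm of the same kind one started from, with the same spatial scale $\Xi(t)$ in the weight, so iterating the factorization does not improve anything. In short, the $L^{2}\to L^{1}$ upgrade via a weight requires moments that a general L\'evy process in the hypotheses of the proposition simply does not have; you also need to watch out that no amount of ``Leibniz distribution of the weight'' between the two convolution factors removes the need for a global moment estimate on the driving noise.

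The paper's proof avoids this entirely by truncating the L\'evy measure at the $t$-dependent radius $r=1/\Xi(t)$: one writes $\Psi_{\nu}=\Psi_{\nu_{r}}+\Psi_{\nu_{r}'}$ with $\nu_{r}=\1_{\{|z|\leq r\}}\nu$ and $\nu_{r}'=\1_{\{|z|>r\}}\nu$, so that $p_{t}=q_{t}^{r}\ast p_{t}^{r}$, where $q_{t}^{r}$ is merely a probability measure (harmless under Young's inequality) and the small-jumps density $p_{t}^{r}$ has all moments because its L\'evy measure has compact support. Then \citep[Proposition~2.3]{SSW12} supplies the pointwise bound $|\nabla p_{t}^{1/\Xi(t)}(z)|\leq C\Xi(t)^{d+1}\bigl(1+|z|\Xi(t)\bigr)^{-d-1}$ for $t\in(0,t_{1}]$ --- this is where the exponent $2d+2$ in \eqref{EQ:29} is actually consumed, not through your $\Xi(t)^{(d+2)/2}\cdot\Xi(t)^{-d/2}$ bookkeeping --- and integration gives $\|\nabla p_{t}^{1/\Xi(t)}\|_{L^{1}}\leq C\Xi(t)$, hence $\|\nabla p_{t}\|_{L^{1}}=\|q_{t}^{1/\Xi(t)}\ast\nabla p_{t}^{1/\Xi(t)}\|_{L^{1}}\leq C\Xi(t)$. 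Your Step~4 then closes the case $t>t_{1}$ exactly as in the paper.
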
 
\begin{proof}
From \eqref{EQ:04} it follows that $Z(t)$ has a density $p_t$. For $r > 0$
 write $\Psi_{\nu} = \Psi_{\nu_r} + \Psi_{\nu_r'}$ where $\nu'_r(dz) = \1_{ \{ |z| > r\} }\nu(dz)$ and $\nu_r(dz) = \1_{ \{ |z| \leq r \} }\nu(dz)$.
 Then $p_t = q_t^r \ast p_t^r$, where $q_t^r$ is the infinite divisible distribution with symbol $\Psi_{\nu_r'}$ and $p_t^r$ the density with symbol $\Psi_{\nu_r}$. 
 It follows from \cite[Proposition 2.3]{SSW12} that there exist $t_1 > 0$ and $C > 0$ such that for all $t \in (0,t_1]$,
 \[
  | \nabla p_t^{1/\Xi(t)}(z)| \leq C \Xi(t)^{d+1}\left(1 + |z|\Xi(t)\right)^{-d-1}, \qquad z \in \R^d,
 \]
 and hence $\| \nabla p_t^{1 / \Xi(t)}\|_{L^1(\R^d)} \leq C \Xi(t) \int_{\R^d} (1 + |z|)^{-d - 1} dz < \infty$.
 By Young inequality we obtain for $t \in (0,t_1]$ and some constant $C' > 0$,
 \[
  \| \nabla p_t \|_{L^1(\R^d)} = \| q_t^{1 / \Xi(t)} \ast (\nabla p_t^{1/\Xi(t)}) \|_{L^1(\R^d)} \leq \| \nabla p_t^{1/\Xi(t)} \|_{L^1(\R^d)} \leq C' \Xi(t).
 \]
 Now let $t > t_1$, then using the infinite divisibility of $p_t$, we obtain $p_t = p_{t-t_1} \ast p_{t_1}$ and hence
 \[
  \| \nabla p_t \|_{L^1(\R^d)} = \| p_{t-t_1} \ast (\nabla p_{t_1}) \|_{L^1(\R^d)} \leq \| \nabla p_{t_1} \|_{L^1(\R^d)} \leq C' \Xi(t_1).
 \]
\end{proof}

Below we provide a sufficient condition for (A1)
which includes Example \ref{MAIN:EXAMPLE}. \begin{Proposition} Let
$m\leq d$ and $I_{1},\dots,I_{m}\subset\{1,\dots,d\}$ be disjoint
with $I_{1}\cup\dots\cup I_{d}=\{1,\dots,d\}$. For each $j\in\{1,\dots,m\}$,
let $Z^{j}$ be a $|I_{j}|$-dimensional Lévy process with Lévy measure
$\nu_{j}$ and symbol
\[
\Psi_{\nu_{j}}(\xi)=\int\limits _{\R^{|I_{j}|}}\left(1+i\1_{\{|z|\leq1\}}(\xi\cdot z)-e^{i(\xi\cdot z)}\right)\nu_{j}(dz).
\]
Suppose that $Z^{1},\dots,Z^{m}$ are independent and there exist
constants $c,C>0$ and $\alpha_{1},\dots,\alpha_{m}\in(0,2)$ such
that
\[
c|\xi|^{\alpha_{j}}\leq\mathrm{Re}(\Psi_{\nu_{j}}(\xi))\leq C|\xi|^{\alpha_{j}},\ \ |\xi|\gg1,\ \ j\in\{1,\dots,m\}.
\]
Then $Z=(Z^{1},\dots,Z^{m})$ has a smooth density $f_{t}$ such that
\[
\int\limits _{\R^{d}}|\nabla_{I_{j}}f_{t}(z)|dz\leq Ct^{-1/\alpha_{j}},\ \ t\to0,\ \ j\in\{1,\dots,m\}.
\]
In particular, $Z$ satisfies condition (A1). 
\end{Proposition}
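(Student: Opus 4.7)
The plan is to exploit the product structure arising from independence of $Z^{1},\dots,Z^{m}$, reduce the claim to a per-block estimate, and then invoke Proposition \ref{PROP:03} for each $Z^{j}$.

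First, since $Z^{1},\dots,Z^{m}$ are independent, the symbol splits as $\Psi_{\nu}(\xi)=\sum_{j=1}^{m}\Psi_{\nu_{j}}(\xi_{I_{j}})$, the characteristic function factorizes, and once each marginal density $f_{t}^{j}$ of $Z^{j}(t)$ exists the density of $Z(t)$ has the product form $f_{t}(z)=\prod_{j=1}^{m}f_{t}^{j}(z_{I_{j}})$. Consequently
\[
\nabla_{I_{j}}f_{t}(z)=\nabla f_{t}^{j}(z_{I_{j}})\prod_{k\neq j}f_{t}^{k}(z_{I_{k}}),
\]
and Fubini together with $\|f_{t}^{k}\|_{L^{1}(\R^{|I_{k}|})}=1$ reduces the assertion to proving
\[
\|\nabla f_{t}^{j}\|_{L^{1}(\R^{|I_{j}|})}\leq Ct^{-1/\alpha_{j}},\qquad t\to 0,
\]
for each $j\in\{1,\dots,m\}$.

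Next I would verify the two hypotheses of Proposition \ref{PROP:03} for each block $Z^{j}$. The lower bound $c|\xi|^{\alpha_{j}}\leq\mathrm{Re}(\Psi_{\nu_{j}}(\xi))$ for $|\xi|\gg1$ gives \eqref{EQ:04} at once, since $|\xi|^{\alpha_{j}}/\log(1+|\xi|)\to\infty$. The upper bound implies $\delta_{j}(\eta):=\sup_{|\xi|\leq\eta}\mathrm{Re}(\Psi_{\nu_{j}}(\xi))\leq C\eta^{\alpha_{j}}$ for large $\eta$, hence $\Xi_{j}(t):=\delta_{j}^{-1}(1/t)\geq(Ct)^{-1/\alpha_{j}}$ for small $t$. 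Choosing $R$ beyond which the lower bound holds,
\[
\int_{\R^{|I_{j}|}}e^{-t\mathrm{Re}(\Psi_{\nu_{j}}(\xi))}|\xi|^{|I_{j}|+2}d\xi\leq C_{R}+\int_{|\xi|>R}e^{-ct|\xi|^{\alpha_{j}}}|\xi|^{|I_{j}|+2}d\xi,
\]
and a polar-coordinate rescaling on the second term gives an estimate of order $t^{-(2|I_{j}|+2)/\alpha_{j}}$ as $t\to 0$, which is dominated by $\Xi_{j}(t)^{2|I_{j}|+2}$. Thus \eqref{EQ:29} holds, and Proposition \ref{PROP:03} yields $f_{t}^{j}\in C^{1}(\R^{|I_{j}|})$ with $\|\nabla f_{t}^{j}\|_{L^{1}(\R^{|I_{j}|})}\leq C\,\Xi_{j}(t\wedge t_{1})\leq C'\,t^{-1/\alpha_{j}}$ for small $t$.

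Smoothness of $f_{t}^{j}$ (and hence of $f_{t}$) beyond $C^{1}$ follows from the exponential decay of $|e^{-t\Psi_{\nu_{j}}(\xi)}|=e^{-t\mathrm{Re}(\Psi_{\nu_{j}}(\xi))}$ at infinity together with Fourier inversion. Finally, setting $\alpha_{k}:=\alpha_{j}$ whenever $k\in I_{j}$, the componentwise bound $\|\partial f_{t}/\partial z_{k}\|_{L^{1}(\R^{d})}\leq\|\nabla_{I_{j}}f_{t}\|_{L^{1}(\R^{d})}\leq Ct^{-1/\alpha_{k}}$ gives \eqref{EXAMPLE:30}, which in turn implies (A1) by the first remark of Section~2. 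The main obstacle is the clean verification of \eqref{EQ:29}: the two-sided bound on $\mathrm{Re}(\Psi_{\nu_{j}})$ is only asymptotic in $|\xi|$, so one must split the integral at a large radius and track constants carefully to be sure the polar-coordinate scaling produces precisely the power $t^{-(2|I_{j}|+2)/\alpha_{j}}$ that matches $\Xi_{j}(t)^{2|I_{j}|+2}$.
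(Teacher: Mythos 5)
Your proof is correct. The paper omits a proof for this Proposition (it appears immediately after Proposition~\ref{PROP:03} without a \textit{proof} environment), but your argument fills in precisely the expected steps: factor $f_t = \prod_j f_t^j$ by independence, reduce the $L^1$ bound on $\nabla_{I_j}f_t$ to a per-block bound on $\|\nabla f_t^j\|_{L^1}$ via Tonelli and $\|f_t^k\|_{L^1}=1$, verify \eqref{EQ:04} from the polynomial lower bound on $\mathrm{Re}(\Psi_{\nu_j})$, verify \eqref{EQ:29} by splitting the Fourier integral at a large radius and rescaling, and then apply Proposition~\ref{PROP:03} in dimension $|I_j|$. You correctly note that the asymptotic nature of the two-sided bound forces a split; the bounded low-frequency contribution is harmless because $\Xi_j(t)^{2|I_j|+2}\to\infty$ as $t\to0$. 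The one step worth making explicit is the final conversion from $\Xi_j(t\wedge t_1)$ to $t^{-1/\alpha_j}$, which uses the \emph{lower} bound $\delta_j(\eta)\geq c\eta^{\alpha_j}$ for $\eta\gg1$ to get $\Xi_j(t)\leq (ct)^{-1/\alpha_j}$ for small $t$ (you only spelled out the reverse inequality $\Xi_j(t)\geq (Ct)^{-1/\alpha_j}$, which is the one needed for \eqref{EQ:29}). With that said aloud, the argument is complete.
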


The following are our main examples for condition (A1). 
\begin{Example}\label{EXAMPLE:07}
Let $Z_{1},\dots,Z_{d}$ be independent one-dimensional pure-jump
Lévy processes with Lévy measures $\nu_{1},\dots,\nu_{d}$.
\begin{enumerate}
\item[(i)] Let $c_{1}^{\pm},\dots,c_{d}^{\pm}\geq0$, $\alpha_{1}^{\pm},\dots,\alpha_{d}^{\pm}\in(0,2)$,
and assume that
\[
\nu_{k}=c_{k}^{+}r^{-1-\alpha_{k}^{+}}\1_{(0,1]}(r)dr+c_{k}^{-}|r|^{-1-\alpha_{k}^{-}}\1_{[-1,0)}(r)dr+\mu_{k},
\]
where $\mu_{1},\dots,\mu_{d}$ are one-dimensional Lévy measures.
If $c_{k}^{+}+c_{k}^{-}>0$ holds for each $k=1,\dots,d$,
then previous proposition is applicable and hence (A1) holds for
\begin{align*}
\alpha_{k}=\begin{cases}
\alpha_{k}^{+}, & \text{ if }c_{k}^{+}\neq0,\ c_{k}^{-}=0,\\
\alpha_{k}^{-}, & \text{ if }c_{k}^{+}=0,\ c_{k}^{+}\neq0\\
\max\{\alpha_{k}^{+},\alpha_{k}^{-}\}, & \text{ if }c_{k}^{+}\neq0,\ c_{k}^{-}\neq0.
\end{cases},
\end{align*}
\item[(ii)] Let $\nu_{1},\dots,\nu_{d}$ be, for $\alpha_{1},\dots,\alpha_{d}\in(0,2)$,
given by
\[
\nu_{k}(dz)= \sum \limits_{n=1}^{\infty} n^{\alpha_{k}-1}\delta_{1/n}(dz),\qquad k=1,\dots,d.
\]
Then (A1) is satisfied (see \citep[Example 1.6]{DF13}).
\end{enumerate}
\end{Example} 
Condition (A1) holds also true for the case where each $Z_{k}$ is a subordinate Brownian motion. 
\begin{Example}
Let $S_{1}(t),\dots,S_{d}(t)$ be independent subordinators with Laplace
exponents
\[
\psi_{k}(\lambda)=\lambda^{\alpha_{k}/2}\left(\log(1+\lambda)\right)^{\beta_{k}/2},\ \ \alpha_{k}\in(0,2),\ \ \beta_{k}\in(-\alpha_{k},2-\alpha_{k}),\ \ k=1,\dots,d.
\]
Let $B(t)$ a $d$-dimensional Brownian motion independent of $S_{1},\dots,S_{d}$,
and define $Z_{k}(t)=B_{k}(S_{k}(t))$, $k=1,\dots,d$. Let $\Psi_{k}$,
$k=1,\dots,d$, be the corresponding symbolds. 
Then $Z = (Z_1,\dots, Z_d)$ satisfies \eqref{EXAMPLE:30} with
\[
 \widetilde{\alpha}_k = \begin{cases} \alpha_k, & \beta_k \in [0, 2 - \alpha_k) \\ 
 \alpha_k - \e_k, & \beta_k \in (-\alpha_k, 0) \end{cases}, \ \ k \in \{1,\dots, d\},
\]
for any choice of $\e_k \in (0, \alpha_k)$.
\end{Example}
\begin{proof}\
 It suffices to show that Proposition \ref{PROP:03} is applicable in $d = 1$ to each $Z_k$ with symbol $\Psi_k$, $k \in \{1,\dots, d\}$.
 Fix $k \in \{1,\dots, d\}$ and observe that, by \cite[Example 1.4]{SSW12}, one has
\[
\mathrm{Re}(\Psi_{k}(\xi)) \simeq |\xi|^{\alpha_{k}}\left(\log(1+|\xi|)\right)^{\beta_{k}/2}, \qquad \text{ as } |\xi| \to \infty.
\]
Hence we easily see that \eqref{EQ:04} is satisfied.
 It follows from the proof of \cite[Example 1.4]{SSW12} and \cite[Theorem 1.3]{SSW12} that
\eqref{EQ:29} is satisfied and, moreover, one has
\[
  \Xi_k(t) \simeq t^{- \frac{1}{\alpha_k} } \left( \log\left( 1 + \frac{1}{t}\right) \right)^{- \frac{\beta_k}{2 \alpha_k}}, \qquad \text{ as } |\xi| \to \infty.
 \]
 If $\beta_k \in [0,2-\alpha_k)$, then we may use $\log\left( 1 + t^{-1}\right) \geq \log(2)$ for $t \in (0,1]$, to obtain
 \[
  \Xi_k(t) \leq C t^{- \frac{1}{\alpha_k}} \left(\log(2)\right)^{- \frac{\beta_k}{2\alpha_k}}, \qquad \text{ as } t \to 0.
 \]
  If $\beta_k \in (-\alpha_k,0)$, then we may use $\log\left( 1 + t^{-1} \right) \leq Ct^{ - r_k}$,
 where $r_k = \frac{2 \alpha_k}{|\beta_k|} \frac{\e_k}{\alpha_k (\alpha_k - \e_k)}$, to obtain
 \[
  \Xi_k(t) \leq C t^{- \frac{1}{\alpha_k}} \left( \log\left( 1 + \frac{1}{t}\right) \right)^{\frac{|\beta_k|}{2\alpha_k}}
  \leq C t^{- \frac{1}{\widetilde{\alpha}_k}}, \qquad \text{ as } t \to 0.
 \]
\end{proof} 

\section{Appendix}

Below we state some useful estimates on stochastic integrals with
respect to Lévy processes. Similar results were obtained in \citep[Lemma 5.2]{DF13}.
\begin{Lemma}\label{LEMMA:00} Let $Z$ be a Lévy process with Lévy
measure $\nu$ and symbol given by \eqref{SYMBOL}. Suppose that there
exist $\gamma\in(0,2]$ and $\delta\in(0,\gamma]$ such that
\[
\int\limits _{\R^{d}}\left(\1_{\{|z|\leq1\}}|z|^{\gamma}+\1_{\{|z|>1\}}|z|^{\delta}\right)\nu(dz)<\infty.
\]
Then the following assertions hold.
\begin{enumerate}
\item[(a)] Let $0<\eta\leq\delta\leq\gamma$ and $1\leq\gamma\leq2$. Then there
exists a constant $C>0$ such that, for any predictable process $H(u)$
and $0\leq s\leq t\leq s+1$,
\[
\E\left[\left|\int\limits _{s}^{t}H(u)dZ(u)\right|^{\eta}\right]\leq C(t-s)^{\eta/\gamma}\sup\limits _{u\in[s,t]}\E\left[|H(u)|^{\gamma}\right]^{\eta/\gamma}.
\]
\item[(b)] Let $0<\eta\leq\delta\leq\gamma<1$ and define $Y(t):=Z(t)+t\int_{|z|\leq1}z\nu(dz)$.
Then there exists a constant $C>0$ such that, for any predictable
process $H(u)$ and $0\leq s\leq t\leq s+1$,
\[
\E\left[\left|\int\limits _{s}^{t}H(u)dY(u)\right|^{\eta}\right]\leq C(t-s)^{\eta/\gamma}\sup\limits _{u\in[s,t]}\E[|H(u)|^{\gamma}]^{\eta/\gamma}.
\]
Moreover, setting $\Delta Y(u)=\lim_{r\nearrow u}(Y(u)-Y(r))$, we
obtain
\[
\E\left[\left(\sum\limits _{u\in[s,t]}|\Delta Y(u)|\right)^{\eta}\right]\leq C(t-s)^{\eta/\gamma}.
\]
\end{enumerate}
\end{Lemma}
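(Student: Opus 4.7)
The strategy is to apply the Lévy--Itô decomposition to $Z$ (resp.\ $Y$), separating small and large jumps, and to combine $L^{\gamma}$-type moment inequalities for Poisson-type jump integrals with Jensen's inequality to move from the $\gamma$-moment to the $\eta$-moment (for $\eta\le\gamma$). In both parts, $t-s\le 1$ will be used to upgrade $(t-s)$ to $(t-s)^{\eta/\gamma}$.

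\textbf{Part (a).} With $\gamma\in[1,2]$, write $Z=M+J$, where $M(t)=\int_{0}^{t}\!\int_{|z|\le 1}z\,\tilde N(du,dz)$ is the compensated small-jump martingale and $J(t)=\int_{0}^{t}\!\int_{|z|>1}z\,N(du,dz)$ is the (finite-intensity) sum of large jumps. For the martingale part I invoke the Bichteler--Jacod--Meyer $L^{\gamma}$-inequality (valid for $\gamma\in[1,2]$),
\[
\E\!\left[\left|\int_{s}^{t}H(u^-)\,dM(u)\right|^{\gamma}\right]\le C\,\E\!\left[\int_{s}^{t}|H(u)|^{\gamma}du\right]\int_{|z|\le 1}|z|^{\gamma}\nu(dz),
\]
bound the right-hand side by $C(t-s)\sup_{u\in[s,t]}\E[|H(u)|^{\gamma}]$, and then apply Jensen with the concave map $x\mapsto x^{\eta/\gamma}$ to obtain the required $(t-s)^{\eta/\gamma}(\sup\E[|H|^{\gamma}])^{\eta/\gamma}$. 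For $J$, I bound $|\int_{s}^{t}H(u^-)\,dJ(u)|\le\sum_{u\in[s,t]}|H(u^-)|\,|\Delta Z(u)|\1_{\{|\Delta Z(u)|>1\}}$; if $\eta\le 1$, subadditivity together with the Lévy compensator formula gives $\le C(t-s)\sup\E[|H|^{\eta}]\!\int_{|z|>1}|z|^{\eta}\nu(dz)$, finite since $\eta\le\delta$ and $|z|^{\eta}\le|z|^{\delta}$ on $\{|z|>1\}$; if $\eta\in(1,\delta]$ I further split $J=\tilde J + t\!\int_{|z|>1}z\nu(dz)$ and apply Bichteler--Jacod to $\tilde J$ together with the triangle inequality in $L^{\eta}$ to the drift. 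A final Jensen step converts $\sup\E[|H|^{\eta}]$ into $(\sup\E[|H|^{\gamma}])^{\eta/\gamma}$.

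\textbf{Part (b).} When $\gamma\in(0,1)$, the inequality $|z|\le|z|^{\gamma}$ on $\{|z|\le 1\}$ yields $\int_{|z|\le 1}|z|\,\nu(dz)<\infty$, so $Y$ is a pure-jump process of finite variation with $Y(t)=\sum_{u\le t}\Delta Z(u)$, and $\int_{s}^{t}H(u^-)\,dY(u)=\sum_{u\in[s,t]}H(u^-)\Delta Z(u)$. Using subadditivity (valid since $\eta<1$), I split
\[
\left|\int_{s}^{t}H\,dY\right|^{\eta}\le\left|\sum_{u}H\Delta Z\,\1_{\{|\Delta Z|\le 1\}}\right|^{\eta}+\left|\sum_{u}H\Delta Z\,\1_{\{|\Delta Z|>1\}}\right|^{\eta}.
\]
The large-jump part is handled exactly as in (a) via subadditivity and the compensator formula, using $\int_{|z|>1}|z|^{\eta}\nu\le\int_{|z|>1}|z|^{\delta}\nu<\infty$. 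For the small-jump part I first apply Jensen with exponent $\eta/\gamma\le 1$ to pass to the $\gamma$-th moment, then subadditivity ($\gamma<1$) plus the compensator identity to obtain
\[
\E\!\left[\left|\sum_{u}H\Delta Z\,\1_{\{|\Delta Z|\le 1\}}\right|^{\gamma}\right]\le\E\!\left[\int_{s}^{t}|H|^{\gamma}du\right]\int_{|z|\le 1}|z|^{\gamma}\nu(dz),
\]
which is finite by hypothesis. The addendum on $\sum_{u\in[s,t]}|\Delta Y(u)|$ is the special case $H\equiv 1$ of the above, with $\Delta Z(u)$ replaced by $|\Delta Z(u)|=|\Delta Y(u)|$; the compensator bound is unchanged.

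\textbf{Main obstacle.} The delicate point is choosing the correct exponent at each step so that the integrability assumption is invoked only where valid: on $\{|z|\le 1\}$ one has $\int|z|^{\gamma}\nu<\infty$ but possibly $\int|z|^{\eta}\nu=\infty$ for $\eta<\gamma$ (since $|z|^{\eta}\ge|z|^{\gamma}$ there), while on $\{|z|>1\}$ the reverse failure can occur for $\delta<\gamma$. Splitting the integrand into small- and large-jump contributions \emph{before} taking the moment, and combining Jensen in exponent $\eta/\gamma$ with the Lévy compensator formula in the respective regime, is precisely what lets the proof use the right hypothesis at the right place.
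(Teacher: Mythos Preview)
Your proposal is correct and follows essentially the same route as the paper: L\'evy--It\^o decomposition into small and large jumps, moment bounds via BDG/Bichteler--Jacod for the compensated part, subadditivity plus the compensator formula for the Poisson sums, and Jensen/H\"older to pass from exponent $\gamma$ (or $\delta$) to $\eta$, exploiting $t-s\le 1$ throughout. The only cosmetic differences are that the paper derives the $L^\gamma$-martingale estimate from BDG by hand rather than citing Bichteler--Jacod--Meyer, and for the large-jump term it splits cases according to $\delta\le 1$ versus $\delta>1$ (rather than your $\eta\le 1$ versus $\eta>1$); these lead to the same bounds.
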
 \begin{proof} (a) Let $N(du,dz)$ be a Poisson random
measure with compensator $\widehat{N}(du,dz)=dum(dz)$ such that
\[
Z(t)=\int\limits _{0}^{t}\int\limits _{|z|\leq1}z\widetilde{N}(du,dz)+\int\limits _{0}^{t}\int\limits _{|z|>1}zN(du,dz),
\]
where $\widetilde{N}(du,dz)=N(du,dz)-\widehat{N}(du,dz)$ denotes
the corresponding compensated Poisson random measure. Then
\begin{align*}
\E\left[\left|\int\limits _{s}^{t}H(u)dZ(u)\right|^{\eta}\right] & \leq C\E\left[\left|\int\limits _{s}^{t}\int\limits _{|z|\leq1}H(u)z\widetilde{N}(du,dz)\right|^{\eta}\right]+C\E\left[\left|\int\limits _{s}^{t}\int\limits _{|z|>1}H(u)zN(du,dz)\right|^{\eta}\right].
\end{align*}
If $\eta\geq1$, then by the BDG-inequality, sub-additivity of $x\longmapsto x^{\frac{\gamma}{2}}$
and Hölder inequality we obtain
\begin{align*}
 & \ \E\left[\left|\int\limits _{s}^{t}\int\limits _{|z|\leq1}H(u)z\widetilde{N}(du,dz)\right|^{\eta}\right]\leq C\E\left[\left|\int\limits _{s}^{t}\int\limits _{|z|\leq1}|H(u)z|^{2}N(du,dz)\right|^{\eta/2}\right]\\
 & \leq C\E\left[\left|\int\limits _{s}^{t}\int\limits _{|z|\leq1}|H(u)z|^{\gamma}N(du,dz)\right|^{\eta/\gamma}\right]\leq C\E\left[\int\limits _{s}^{t}\int\limits _{|z|\leq1}|H(u)z|^{\gamma}du\nu(dz)\right]^{\eta/\gamma}\\
 & \leq C(t-s)^{\frac{\eta}{\gamma}}\left(\int\limits _{|z|\leq1}|z|^{\gamma}\nu(dz)\right)^{\eta/\gamma}\sup\limits _{u\in[s,t]}\E\left[|H(u)|^{\gamma}\right]^{\eta/\gamma}.
\end{align*}
If $0<\eta\leq1\leq\gamma\leq2$, then the Hölder inequality and previous
estimates imply
\begin{align*}
\E\left[\left|\int\limits _{s}^{t}\int\limits _{|z|\leq1}H(u)z\widetilde{N}(du,dz)\right|^{\eta}\right] & \leq\E\left[\left|\int\limits _{s}^{t}\int\limits _{|z|\leq1}H(u)z\widetilde{N}(du,dz)\right|\right]^{\eta}\\
 & \leq C(t-s)^{\frac{\eta}{\gamma}}\sup\limits _{u\in[s,t]}\E\left[|H(u)|^{\gamma}\right]^{\eta/\gamma}.
\end{align*}
Let us turn to the integral involving the big jumps. If $\delta\in(0,1]$,
then by sub-additivity of $x\longmapsto x^{\delta}$ and Hölder inequality
we get
\begin{align*}
 & \ \E\left[\left|\int\limits _{s}^{t}\int\limits _{|z|>1}H(u)zN(du,dz)\right|^{\eta}\right]\leq\E\left[\left|\int\limits _{s}^{t}\int\limits _{|z|>1}|H(u)z|^{\delta}N(du,dz)\right|^{\eta/\delta}\right]\\
 & \leq\E\left[\int\limits _{s}^{t}\int\limits _{|z|>1}|H(u)z|^{\delta}du\nu(dz)\right]^{\eta/\delta}\leq(t-s)^{\frac{\eta}{\delta}}\sup\limits _{u\in[s,t]}\E\left[|H(u)|^{\delta}\right]^{\eta/\delta}.
\end{align*}
If $\delta\in(1,\gamma]$, then
\begin{align*}
 & \ \E\left[\left|\int\limits _{s}^{t}\int\limits _{|z|>1}H(u)zN(du,dz)\right|^{\eta}\right]\leq C\E\left[\left|\int\limits _{s}^{t}\int\limits _{|z|>1}H(u)z\widetilde{N}(du,dz)\right|^{\eta}\right]+C\E\left[\left|\int\limits _{s}^{t}\int\limits _{|z|>1}H(u)zdu\nu(dz)\right|^{\eta}\right]
\end{align*}
The stochastic integral can be estimated similarly as before, which
gives
\[
\E\left[\left|\int\limits _{s}^{t}\int\limits _{|z|>1}H(u)z\widetilde{N}(du,dz)\right|^{\eta}\right]\leq C(t-s)^{\eta/\delta}\sup\limits _{u\in[s,t]}\E[|H(u)|^{\delta}]^{\eta/\delta}.
\]
The second integral can be estimated by
\begin{align*}
\E\left[\left|\int\limits _{s}^{t}\int\limits _{|z|>1}H(u)zdu\nu(dz)\right|^{\eta}\right]\leq\E\left[\left|\int\limits _{s}^{t}\int\limits _{|z|>1}H(u)zdu\nu(dz)\right|^{\delta}\right]^{\eta/\delta}\leq(t-s)^{\eta}\sup\limits _{u\in[s,t]}\E\left[|H(u)|^{\delta}\right]^{\eta/\delta}.
\end{align*}
Collecting all estimates and using $t-s\leq1$, so that $(t-s)^{\eta/\delta}\leq(t-s)^{\eta/\gamma}$
and $(t-s)^{\eta}\leq(t-s)^{\eta/\gamma}$, gives
\[
\E\left[\left|\int\limits _{s}^{t}H(u)dZ(u)\right|^{\eta}\right]\leq C(t-s)^{\frac{\eta}{\gamma}}\left(\sup\limits _{u\in[s,t]}\E\left[|H(u)|^{\gamma}\right]^{\eta/\gamma}+\sup\limits _{u\in[s,t]}\E[|H(u)|^{\delta}]^{\eta/\delta}\right).
\]
Applying Hölder inequality with $p=\gamma/\delta$ and $q=\gamma/(\gamma-\delta)$
gives the assertion.

(b) Consider the decomposition
\[
Z(t)=\int\limits _{0}^{t}\int\limits _{|z|\leq1}zN(du,dz)+\int\limits _{0}^{t}\int\limits _{|z|>1}zN(du,dz)-(t-s)A,
\]
where $A=\int_{|z|\leq1}z\nu(dz)$. Then we obtain
\begin{align*}
\E\left[\left|\int\limits _{s}^{t}H(u)dZ(u)\right|^{\eta}\right] & \leq\E\left[\left|\int\limits _{s}^{t}\int\limits _{|z|\leq1}H(u)zN(du,dz)\right|^{\eta}\right]+\E\left[\left|\int\limits _{s}^{t}\int\limits _{|z|>1}H(u)zN(du,dz)\right|^{\eta}\right]\\
 & \ \ \ +\E\left[\left|\int\limits _{s}^{t}H(u)Adu\right|^{\eta}\right]\\
 & \leq C(t-s)^{\eta/\gamma}\sup\limits _{u\in[s,t]}\E[|H(u)|^{\gamma}]^{\eta/\gamma}+C(t-s)^{\eta/\delta}\sup\limits _{u\in[s,t]}\E[|H(u)|^{\delta}]^{\eta/\delta}\\
 & \ \ \ +C(t-s)^{\eta}\sup\limits _{u\in[s,t]}\E[|H(u)|^{\eta}]\\
 & \leq C(t-s)^{\eta/\gamma}\sup\limits _{u\in[s,t]}\E[|H(u)|^{\gamma}]^{\eta/\gamma},
\end{align*}
where we have used the same estimates as in part (a). \end{proof}

\begin{footnotesize}

\bibliographystyle{alpha}
\bibliography{Bibliography}

\end{footnotesize}

\end{document}